\documentclass[11pt]{article}

\usepackage[T1]{fontenc}
\usepackage[utf8]{inputenc}
\usepackage[english]{babel}
\usepackage{lmodern}
\usepackage{amsmath,amssymb,amsthm,mathtools}
\usepackage{microtype}
\usepackage{geometry}
\usepackage{booktabs,tabularx,array}
\usepackage{float}
\usepackage{tikz}
\usepackage[numbers,sort&compress]{natbib}
\usepackage[colorlinks=true,linkcolor=blue,citecolor=blue,urlcolor=blue]{hyperref}
\hypersetup{
  pdftitle={Selections of Set-Valued Maps under Stieltjes Clocks: Regularity, Variation, and Atomic Structure},
  pdfauthor={Serkan Ilter; Hulya Duru; Seyit Koca},
  pdfsubject={Selections of set-valued maps under Stieltjes clocks},
  pdfkeywords={set-valued map, selection, Stieltjes clock, Holder regularity, bounded variation, Hausdorff metric, Riesz p-variation, jump structure}
}

\newtheorem{theorem}{Theorem}[section]
\newtheorem{proposition}[theorem]{Proposition}
\newtheorem{lemma}[theorem]{Lemma}
\newtheorem{corollary}[theorem]{Corollary}
\theoremstyle{definition}
\newtheorem{definition}[theorem]{Definition}
\newtheorem{example}[theorem]{Example}
\theoremstyle{remark}
\newtheorem{remark}[theorem]{Remark}

\newcommand{\K}{\mathcal K}
\newcommand{\VH}{V_H}
\newcommand{\Var}{V}
\newcommand{\Jump}{\Delta^{+}}

\title{Selections of Set-Valued Maps under Stieltjes Clocks: Regularity, Variation, and Atomic Structure}
\author{%
\large Serkan \.{I}lter$^{1,*}$ \qquad
\large H\"ulya Duru$^{2}$ \qquad
\large Seyit Koca$^{3}$\\[7pt]
\normalsize $^{1,2}$Department of Mathematics, Faculty of Science, Istanbul University, Istanbul, T\"urkiye\\
\normalsize $^{3}$Department of Management Information Systems, Istinye University, Istanbul, T\"urkiye\\[5pt]
\small \texttt{ilters@istanbul.edu.tr} \qquad
\texttt{hduru@istanbul.edu.tr} \qquad
\texttt{seyit.koca@istinye.edu.tr}\\[5pt]
\footnotesize $^{*}$Corresponding author: Serkan Ilter, \texttt{ilters@istanbul.edu.tr}%
}
\date{}

\begin{document}
\maketitle

\begin{abstract}
A Stieltjes clock allows effective time to advance continuously, remain unchanged over intervals, or jump. We study whether a compact-valued set-valued map evolving relative to a Stieltjes clock admits a single-valued selection that passes through a prescribed graph point while retaining the regularity and variation of the multifunction.

For $g$-H\"older exponents $\alpha\ge1$, we prove that regularity can be preserved without increasing the $g$-H\"older seminorm, with no monotonicity assumption on $g$. If $g$ is additionally nondecreasing, one prescribed-point selection preserves both this regularity and the Hausdorff variation on every subinterval. The same selection consequently preserves all finite Riesz $p$-variations associated with nondecreasing external clocks.

For $\alpha>1$, the structure becomes jump-driven. For left-continuous nondecreasing Stieltjes clocks, continuous clock evolution cannot generate variation: all variation is carried by jumps. We obtain exact jump decompositions for both the set-valued map and its selection, together with explicit atomic formulas for Riesz $p$-variation. Examples show that the principal regularity, variation, and jump bounds are attained.

For compact-convex Euclidean-valued maps, we also examine the case $0<\alpha<1$. In this setting, the H\"older exponent can still be preserved, but preservation of the same constant through a prescribed point holds in one dimension and can fail in higher dimensions. Finally, without a quantitative H\"older bound, we characterize exactly when every compact-valued Hausdorff $g$-continuous map admits a prescribed-point $g$-continuous selection: precisely when the clock image is zero-dimensional.
\end{abstract}

\noindent\textbf{Keywords:} set-valued map; selection; Stieltjes clock; H\"older regularity; bounded variation; Hausdorff metric; Riesz $p$-variation; jump structure.

\noindent\textbf{2020 Mathematics Subject Classification:} 54C60, 54C65, 26A45.

\section{Introduction}\label{sec:introduction}

Set-valued models allow several states to be admissible at the same time. A natural question is whether one can choose a single admissible state at each time while preserving the regularity and variation of the set-valued map. We study this problem when change is measured through a Stieltjes clock that can advance continuously, remain constant on intervals, or jump. We also require the selection to pass through a prescribed point of the graph. Our main question is therefore when these properties can be preserved simultaneously by one selection.

The role of the clock becomes especially visible for $g$-H\"older regularity with exponent $\alpha>1$. A continuous clock forces such a map to be constant, whereas jumps can support nonconstant behavior. Thus the case $\alpha>1$ becomes nontrivial precisely because a Stieltjes clock may contain jumps. Example~\ref{ex:one-jump} gives a simple instance of this phenomenon.

Classical selection results already provide strong preservation statements at the Lipschitz and bounded-variation levels. For compact-valued maps on an interval, prescribed-point selections can be chosen without increasing the Lipschitz constant, and bounded-variation selections can be obtained with corresponding variation estimates \cite{Chistyakov1998,ChistyakovGalkin1998}. Later work extended bounded-variation and generalized-variation selection results to more general settings \cite{BelovChistyakov2000,Chistyakov2004,ChistyakovRepovs2007}. These results make Hausdorff variation a natural quantity for comparing a multifunction with its selections.

Below the Lipschitz threshold the picture changes. Chistyakov and Galkin showed that, for every $0<\gamma<1$, a compact-valued $\gamma$-H\"older multifunction on an interval may have no continuous selection at all \cite{ChistyakovGalkin1998}. For convex-valued maps, selection results are available in finite-dimensional Euclidean spaces: standard convex selections, including Steiner-type selections, preserve H\"older exponents through their Lipschitz dependence on the Hausdorff metric \cite{Dentcheva2000,DeutschLiPark1989,ArutyunovObukhovskii2017}. Related modulus-of-continuity problems for nonconvex multifunctions also arise in differential inclusions \cite{BressanAncona1993}.

Under the same compact-convex assumptions used in classical H\"older selection results, we sharpen the problem by requiring the selection to pass through a prescribed graph point. Let $0<\alpha<1$ and let $F:I\to\mathcal K_c(\mathbb R^n)$ be $g$-H\"older. The H\"older exponent can still be preserved. In one dimension, the prescribed point can be retained without increasing the original $g$-H\"older constant. From dimension two onward, this same-constant property may fail, while a prescribed-point selection with the same exponent remains available with an explicit controlled enlargement of the constant.

At and above the Lipschitz threshold, the assumptions needed to preserve $g$-H\"older regularity and local variation are different. For $\alpha\ge1$, $g$-H\"older regularity can be preserved by a prescribed-point selection without assuming that $g$ is monotone, and the $g$-H\"older seminorm does not increase. Local variation requires more: for a general nonmonotone $g$, the same selection need not satisfy the corresponding variation bound on every subinterval. When $g$ is nondecreasing, one prescribed-point selection preserves both the $g$-H\"older regularity and the Hausdorff-variation control locally.

The same local variation control also leads to preservation of Riesz $p$-variation. We consider Riesz $p$-variation with respect to a nondecreasing external Stieltjes clock, extending the usual weighted formulation to clocks that may contain plateaus and jumps. A zero increment of the external clock prevents motion when the Riesz $p$-variation is finite, whereas a jump of the clock can support a corresponding state jump with finite contribution. The prescribed-point selection obtained above preserves all finite Riesz $p$-variations of this type.

Classical bounded-variation results already relate jumps to the variation function. Here the additional $g$-H\"older condition with exponent $\alpha>1$ gives a stronger conclusion. For a left-continuous nondecreasing Stieltjes clock, the continuous part of the clock contributes no Hausdorff variation, and the variation of the set-valued map is exactly the sum of its jumps. The prescribed-point selection obtained above has the corresponding jump decomposition, with each of its jumps bounded by the jump of the set-valued map at the same point. This also yields explicit atomic formulas for the associated Riesz $p$-variation.

The examples make the change at the Lipschitz threshold explicit. At $\alpha=1$, continuous variation may persist, whereas for $\alpha>1$ clock jumps can support nonconstant behavior. A Stieltjes clock may also have a continuous component even though only its jumps contribute to the variation of a $g$-H\"older map with exponent greater than one. Finally, a two-point example attains the regularity, variation, and jump bounds simultaneously.

Finally, we consider $g$-continuity without a quantitative H\"older bound. Since a $g$-continuous set-valued map is constant on the level sets of $g$, it factors through $g(I)$. The selection problem can therefore be studied as an ordinary continuous selection problem on the subspace $g(I)\subset\mathbb R$. We show that, for complete metric targets, every compact-valued Hausdorff $g$-continuous map admits a prescribed-point $g$-continuous selection exactly when $g(I)$ is zero-dimensional. If $g(I)$ contains a nondegenerate interval, the classical nonconvex selection failure reappears. Pure-jump Stieltjes clocks form an important special case, but the zero-dimensionality criterion also applies beyond clocks whose Stieltjes measure is purely atomic.

The paper is organized as follows. After the preliminaries, Sections~\ref{sec:finite-chain}--\ref{sec:main-preservation} establish prescribed-point selection results and the preservation of $g$-H\"older regularity and local variation. Section~\ref{sec:riesz} treats Riesz $p$-variation with respect to external Stieltjes clocks. Sections~\ref{sec:atomic-power}--\ref{sec:exact-jumps} describe the jump structure for $g$-H\"older maps with exponent $\alpha>1$, including the atomic power-sum identity and the exact decomposition of variation into jumps. Section~\ref{sec:examples} gives examples that attain the main bounds. Sections~\ref{sec:convex-sub} and~\ref{sec:g-continuous} consider, respectively, convex $g$-H\"older selections below the Lipschitz threshold and $g$-continuous selections characterized by the zero-dimensionality of $g(I)$. The final section collects the main conclusions.

\section{Preliminaries and notation}\label{sec:preliminaries}

Let $I=[a,b]$ be a compact interval and let $(X,d)$ be a metric space. Throughout the paper, we write
\[
 \Omega_I:=\{(s,t)\in I^2:s\le t\}.
\]
Denote by $\K(X)$ the family of all nonempty compact subsets of $X$. For $x\in X$ and $B\subset X$, set $d(x,B)=\inf_{y\in B}d(x,y)$. For $A,B\in\K(X)$, the Hausdorff distance is
\begin{equation}\label{eq:hausdorff}
 d_H(A,B)
 =\max\left\{
 \sup_{x\in A}d(x,B),
 \sup_{y\in B}d(y,A)
 \right\}.
\end{equation}
No convexity or linear structure on $X$ will be assumed in the main quantitative results.

A set-valued map $F:I\to\K(X)$ is called constant on a set $J\subset I$ if there exists $K\in\K(X)$ such that $F(t)=K$ for every $t\in J$.

\begin{definition}[$g$-regularity]\label{def:g-regularity}
Let $g:I\to\mathbb R$ and let $\alpha>0$. A map $F:I\to\K(X)$ is called $g$-H\"older of exponent $\alpha$ if there exists $L\ge0$ such that, for all $s,t\in I$,
\begin{equation}\label{eq:g-holder}
 d_H(F(s),F(t))\le L|g(t)-g(s)|^\alpha.
\end{equation}
For a $g$-H\"older multifunction $F:I\to\K(X)$, $[F]_{\alpha,g}$ denotes the minimum constant $L\ge0$ for which \eqref{eq:g-holder} holds. For $\alpha=1$, we say that $F$ is $g$-Lipschitz.
\end{definition}

Because \eqref{eq:g-holder} is imposed for all pairs, $g(s)=g(t)$ implies $F(s)=F(t)$. Thus $F$ is constant on each set $g^{-1}(\{c\})$, $c\in g(I)$. This fact will also be used in Sections~\ref{sec:main-preservation} and~\ref{sec:g-continuous}. No monotonicity of $g$ is required in Definition~\ref{def:g-regularity}; monotonicity will be imposed only when variation or Stieltjes-measure structure is needed.

\begin{definition}[$g$-continuity]\label{def:g-continuity}
A map $F:I\to\K(X)$ is called $g$-continuous if, for every $t\in I$ and every $\varepsilon>0$, there exists $\delta>0$ such that $d_H(F(s),F(t))<\varepsilon$ whenever $s\in I$ and $|g(s)-g(t)|<\delta$. For a single-valued map $f:I\to X$, $g$-continuity is defined analogously, with $d$ in place of $d_H$.
\end{definition}

For a map $g:I\to\mathbb R$, we write $\operatorname{Im}(g)=g(I)$ for its image on $I$. When regarded as a map $g:I\to\operatorname{Im}(g)$, $g$ is surjective.

A \emph{selection} of $F$ is a map $f:I\to X$ satisfying $f(t)\in F(t)$ for every $t\in I$. For a single-valued $g$-H\"older map $f:I\to X$, $[f]_{\alpha,g}$ denotes the minimum constant $L\ge0$ such that $d(f(s),f(t))\le L|g(t)-g(s)|^\alpha$ for all $s,t\in I$. If $\theta\in I$ and $x_\theta\in F(\theta)$ are fixed, we call $f$ a prescribed-point selection if it additionally satisfies $f(\theta)=x_\theta$.

For $a\le s<t\le b$, the Hausdorff variation of $F$ on $[s,t]$ is
\begin{equation}\label{eq:hausdorff-variation}
 \VH(F;[s,t])
 =\sup_P\sum_{i=1}^n d_H(F(t_{i-1}),F(t_i)),
\end{equation}
where $P$ ranges over all finite partitions $s=t_0<\cdots<t_n=t$. Similarly,
\begin{equation}\label{eq:single-variation}
 \Var(f;[s,t])
 =\sup_P\sum_{i=1}^n d(f(t_{i-1}),f(t_i)).
\end{equation}
When $\VH(F;[a,b])<\infty$, the variation function
\[
 \Phi_F(t)=\VH(F;[a,t])
\]
is nondecreasing and
\begin{equation}\label{eq:variation-additivity}
 \VH(F;[s,t])=\VH(F;[s,u])+\VH(F;[u,t])
 \qquad (s\le u\le t).
\end{equation}
Also,
\begin{equation}\label{eq:endpoint-by-variation}
 d_H(F(s),F(t))\le \VH(F;[s,t]).
\end{equation}

When $g$ is left-continuous and nondecreasing, its right jump at $\tau<b$ is
\begin{equation}\label{eq:right-jump-g}
 \Jump g(\tau)=g(\tau+)-g(\tau).
\end{equation}
For such a clock, we use the associated Stieltjes measure $\mu_g$ with the convention
\[
 \mu_g([s,t))=g(t)-g(s).
\]
In particular,
\[
 \mu_g(\{\tau\})=\Jump g(\tau).
\]
A point $\tau<b$ is a jump point of $g$ if $\Jump g(\tau)>0$. Equivalently, $\tau$ is an atom of the Stieltjes measure $\mu_g$, with mass $\mu_g(\{\tau\})=\Jump g(\tau)$. We call $g$ \emph{pure-jump} when its Stieltjes measure $\mu_g$ is purely atomic, meaning that $\mu_g$ is concentrated on its atoms. Equivalently, for every $t\in I$,
\[
 g(t)-g(a)=\sum_{\tau\in[a,t)}\Jump g(\tau).
\]
In Sections~\ref{sec:atomic-power}--\ref{sec:exact-jumps}, where the jump structure is studied, we additionally assume that $g$ is left-continuous and nondecreasing and that $\alpha>1$.

\section{Controlled selections on finite chains}\label{sec:finite-chain}

The finite-chain result below provides the estimate used in the compactness argument of the next section.

\begin{lemma}[Prescribed-point selection on a finite chain]\label{lem:finite-chain}
Let $F:I\to\K(X)$, and let $Q:\Omega_I\to[0,\infty)$ satisfy, for every $a\le s\le t\le b$,
\begin{equation}\label{eq:Q-dominates}
 d_H(F(s),F(t))\le Q(s,t),
\end{equation}
and, for every triple $(s,u,t)\in I^3$ satisfying $a\le s\le u\le t\le b$,
\begin{equation}\label{eq:Q-superadditive}
 Q(s,u)+Q(u,t)\le Q(s,t).
\end{equation}
Fix $\theta\in I$ and $x_\theta\in F(\theta)$. Let
\[
 t_0<t_1<\cdots<t_m,\qquad m\ge1,
\]
be points of $I$, and suppose that $t_k=\theta$ for some $k\in\{0,\ldots,m\}$. Then there exist points $x_i\in F(t_i)$, $i=0,\ldots,m$, with $x_k=x_\theta$, such that, for any indices $0\le i<j\le m$,
\begin{equation}\label{eq:finite-chain-bound}
 d(x_i,x_j)\le Q(t_i,t_j).
\end{equation}
\end{lemma}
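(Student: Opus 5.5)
The plan is to build the points outward from the prescribed index $k$ by nearest-point selection, one step at a time, and then recover the bound for non-adjacent indices from superadditivity of $Q$.

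\emph{Construction.} Set $x_k=x_\theta$. For $i=k+1,\ldots,m$, choose recursively $x_i\in F(t_i)$ with
\[
d(x_{i-1},x_i)=d\bigl(x_{i-1},F(t_i)\bigr).
\]
Such a nearest point exists because $F(t_i)$ is compact and $y\mapsto d(x_{i-1},y)$ is continuous. Symmetrically, for $i=k-1,\ldots,0$, choose $x_i\in F(t_i)$ with $d(x_{i+1},x_i)=d(x_{i+1},F(t_i))$. No choice is needed on the left when $k=0$, and none on the right when $k=m$.

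\emph{Adjacent steps.} Since $x_{i-1}\in F(t_{i-1})$, definition \eqref{eq:hausdorff} gives
\[
d\bigl(x_{i-1},F(t_i)\bigr)\le d_H\bigl(F(t_{i-1}),F(t_i)\bigr).
\]
The same bound holds with the roles of $t_{i-1}$ and $t_i$ exchanged, since $d_H$ is symmetric. Combining this with \eqref{eq:Q-dominates}, every consecutive pair satisfies
\[
d(x_{i-1},x_i)\le Q(t_{i-1},t_i),\qquad i=1,\ldots,m,
\]
whichever side of $k$ the pair lies on.

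\emph{General indices.} For $0\le i<j\le m$, the triangle inequality gives
\[
d(x_i,x_j)\le\sum_{l=i+1}^{j}d(x_{l-1},x_l)\le\sum_{l=i+1}^{j}Q(t_{l-1},t_l).
\]
Applying \eqref{eq:Q-superadditive} repeatedly to the ordered triples $(t_i,t_{l},t_{l+1})$, by induction on $j-i$, bounds this sum by $Q(t_i,t_j)$. This proves \eqref{eq:finite-chain-bound}.

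There is no real obstacle here. The only point needing care is that the chain is built in both directions from $k$, so that the condition $x_k=x_\theta$ is compatible with the one-step bounds. This works because each adjacent bound uses only the membership $x_{i\mp1}\in F(t_{i\mp1})$ together with the symmetry of $d_H$. Superadditivity is what upgrades the adjacent estimates to all pairs; in particular, no cumulative loss appears across $k$.
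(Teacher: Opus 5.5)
Your proposal is correct and follows the paper's proof essentially verbatim: nearest-point choices outward from $x_k=x_\theta$ in both directions, the adjacent bound $d(x_{i-1},x_i)\le d_H(F(t_{i-1}),F(t_i))\le Q(t_{i-1},t_i)$, and then the triangle inequality combined with repeated superadditivity to reach $Q(t_i,t_j)$. Your explicit remarks on the existence of nearest points via compactness and on the symmetry of $d_H$ for the backward steps simply spell out what the paper leaves implicit.
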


\begin{proof}
Set $x_k=x_\theta$. For $i=k,\ldots,m-1$, compactness of $F(t_{i+1})$ allows us to choose $x_{i+1}\in F(t_{i+1})$ minimizing the distance from $x_i$. Hence
\[
 d(x_i,x_{i+1})
 =d(x_i,F(t_{i+1}))
 \le d_H(F(t_i),F(t_{i+1}))
 \le Q(t_i,t_{i+1}).
\]
Proceed backward in the same way from $x_k$. For any indices $0\le i<j\le m$, the triangle inequality and repeated use of \eqref{eq:Q-superadditive} give
\[
 d(x_i,x_j)
 \le\sum_{r=i+1}^j Q(t_{r-1},t_r)
 \le Q(t_i,t_j).
\]
\end{proof}

The following two controls will be used repeatedly in later sections. For $\alpha\ge1$, $L\ge0$, and a nondecreasing map $g:I\to\mathbb R$, define
\[
 Q_{\alpha,g}:\Omega_I\to[0,\infty),\qquad
 Q_{\alpha,g}(s,t)=L[g(t)-g(s)]^\alpha.
\]
If $F$ has bounded Hausdorff variation, define
\[
 Q_V:\Omega_I\to[0,\infty),\qquad
 Q_V(s,t)=\VH(F;[s,t]).
\]
Both controls satisfy \eqref{eq:Q-superadditive}. For $Q_{\alpha,g}$ this follows from
\[
 u^\alpha+v^\alpha\le(u+v)^\alpha,\qquad u,v\ge0,
\]
while $Q_V$ satisfies \eqref{eq:Q-superadditive} with equality by \eqref{eq:variation-additivity}.

\begin{remark}
For a finite chain $t_0<t_1<t_2<t_3$, with $t_2=\theta$ and increments $a_i=g(t_i)-g(t_{i-1})$, nearest-point choices give
\[
 d(x_0,x_3)
 \le L(a_1^\alpha+a_2^\alpha+a_3^\alpha)
 \le L(a_1+a_2+a_3)^\alpha.
\]
This is the finite-chain estimate used in the global selection argument.
\end{remark}

\section{Global selection via Tychonoff compactness}\label{sec:tychonoff}

The finite-chain lemma provides compatible choices on every finite ordered set of times. Tychonoff compactness and the finite intersection property then yield a single selection on the whole interval satisfying all the corresponding constraints.

\begin{theorem}[Prescribed-point selection under a superadditive control]\label{thm:controlled-selection}
Let $F:I\to\K(X)$, let $\theta\in I$, and let $x_\theta\in F(\theta)$. Let
\[
 Q:\Omega_I\to[0,\infty)
\]
satisfy \eqref{eq:Q-dominates} and \eqref{eq:Q-superadditive}. Then there exists a selection $f:I\to X$ such that
\[
 f(\theta)=x_\theta
\]
and, for every $a\le s<t\le b$,
\begin{equation}\label{eq:global-Q}
 d(f(s),f(t))\le Q(s,t).
\end{equation}
\end{theorem}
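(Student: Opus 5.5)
The plan is a standard Tychonoff/finite-intersection argument in the compact product space
\[
 \mathcal P:=\prod_{t\in I}F(t),
\]
with the product topology. Each factor $F(t)$ is a nonempty compact subset of $X$, so $\mathcal P$ is compact by Tychonoff's theorem. A point of $\mathcal P$ is exactly a selection $f$ of $F$. It therefore suffices to produce a point of $\mathcal P$ lying in every member of a suitable family of closed sets.

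For each pair $(s,t)\in\Omega_I$ with $s<t$, set
\[
 C_{s,t}:=\{f\in\mathcal P:\ d(f(s),f(t))\le Q(s,t)\},
 \qquad
 C_\theta:=\{f\in\mathcal P:\ f(\theta)=x_\theta\}.
\]
Each of these sets is closed in $\mathcal P$. The set $C_{s,t}$ is the preimage of the closed set $\{(x,y):d(x,y)\le Q(s,t)\}\subset F(s)\times F(t)$ under the continuous projection $f\mapsto(f(s),f(t))$. The set $C_\theta$ is the preimage of the singleton $\{x_\theta\}$, which is closed in the metric space $F(\theta)$. A selection satisfying the theorem is precisely a point of
\[
 C_\theta\cap\bigcap_{s<t}C_{s,t}.
\]
By compactness of $\mathcal P$, this intersection is nonempty once the family has the finite intersection property.

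To verify the finite intersection property, take finitely many constraints $C_{s_1,t_1},\dots,C_{s_N,t_N}$, together with $C_\theta$; adjoining $C_\theta$ only shrinks the intersection, so this is harmless. Let $t_0<\dots<t_m$ be the increasing enumeration of the finite set $\{\theta\}\cup\{s_r,t_r\}_{r=1}^N$. If this set is a singleton, add any other point of $I$ so that $m\ge1$; this is possible when $a<b$, which we take to be the setting. Then $\theta=t_k$ for some $k$. Lemma~\ref{lem:finite-chain} supplies $x_i\in F(t_i)$ with $x_k=x_\theta$ and $d(x_i,x_j)\le Q(t_i,t_j)$ for all $i<j$. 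Extend these values to a full selection by assigning an arbitrary point of $F(t)$ at every other $t$; here the nonemptiness of $F(t)$ is used. The resulting element of $\mathcal P$ lies in all of the chosen sets, since each constraint pair $(s_r,t_r)$ is a pair $(t_i,t_j)$ with $i<j$.

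The only delicate points are, first, checking that the sets are closed (this uses only the continuity of $d$ on the compact factors) and, second, the reduction of an arbitrary finite subfamily to a single ordered chain containing $\theta$. The chain lemma does all of the quantitative work. The conclusion is then immediate: any $f$ in the full intersection satisfies $f(\theta)=x_\theta$ and, for every $a\le s<t\le b$, the bound \eqref{eq:global-Q}.
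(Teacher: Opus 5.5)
Your proposal is correct and follows essentially the same route as the paper: Tychonoff compactness of $\prod_{t\in I}F(t)$, closed constraint sets $C_{s,t}$ and $C_\theta$, and the finite intersection property verified by applying Lemma~\ref{lem:finite-chain} to the ordered chain of finitely many time points together with $\theta$. Your extra care with the singleton case, where $m\ge1$ is needed for the lemma, is a harmless refinement that the paper leaves implicit.
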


\begin{proof}
Let $\mathcal P=\prod_{t\in I}F(t)$, endowed with the product topology. Since every $F(t)$ is compact, $\mathcal P$ is compact by Tychonoff's theorem. For $a\le s<t\le b$, let $C_{s,t}$ consist of those $f\in\mathcal P$ satisfying $d(f(s),f(t))\le Q(s,t)$, and let $C_\theta$ consist of those $f\in\mathcal P$ satisfying $f(\theta)=x_\theta$. These sets are closed because the coordinate projections are continuous.

Take a finite subfamily of these constraint sets. It involves only finitely many time points. After including $\theta$ if necessary, write them as $t_0<t_1<\cdots<t_m$, with $\theta=t_k$ for some $k$. Lemma~\ref{lem:finite-chain}, applied with $x_k=x_\theta$, provides points $x_i\in F(t_i)$, $i=0,\ldots,m$, such that $d(x_i,x_j)\le Q(t_i,t_j)$ whenever $0\le i<j\le m$. Hence all constraints in the chosen finite subfamily, together with the prescribed-point condition, are satisfied.

For each remaining $t\in I\setminus\{t_0,\ldots,t_m\}$, choose any $x_t\in F(t)$. These choices define an element of $\mathcal P$ belonging to every set in the chosen finite subfamily. Thus the family $\{C_\theta\}\cup\{C_{s,t}:a\le s<t\le b\}$ has the finite intersection property.

Since $\mathcal P$ is compact and all the constraint sets are closed,
\[
 C_\theta\cap\bigcap_{a\le s<t\le b}C_{s,t}\ne\varnothing.
\]
Any $f$ in this intersection is a selection of $F$, satisfies $f(\theta)=x_\theta$, and obeys $d(f(s),f(t))\le Q(s,t)$ for every $a\le s<t\le b$.
\end{proof}

\begin{theorem}[Prescribed-point selection with local Hausdorff-variation control]\label{thm:local-BV-prescribed}
Let $F:I\to\K(X)$ have bounded Hausdorff variation. Then, for every $\theta\in I$ and every $x_\theta\in F(\theta)$, there exists a selection $f:I\to X$ with $f(\theta)=x_\theta$ such that, for every $a\le s<t\le b$,
\begin{equation}\label{eq:local-BV-control}
\begin{aligned}
 d(f(s),f(t)) &\le \VH(F;[s,t]),\\
 \Var(f;[s,t]) &\le \VH(F;[s,t]).
\end{aligned}
\end{equation}
\end{theorem}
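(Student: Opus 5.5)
We apply Theorem~\ref{thm:controlled-selection} with the control $Q=Q_V$, where $Q_V(s,t)=\VH(F;[s,t])$. Since $F$ has bounded Hausdorff variation, $Q_V$ takes finite values in $[0,\infty)$ on $\Omega_I$. We set $Q_V(s,s)=0$, which is consistent with the empty partition and with $d_H(F(s),F(s))=0$.

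Next we check the two hypotheses of Theorem~\ref{thm:controlled-selection}. Condition \eqref{eq:Q-dominates} is exactly the estimate \eqref{eq:endpoint-by-variation}. For $s=t$ both sides vanish, so that case is trivial. Condition \eqref{eq:Q-superadditive} holds with equality by the additivity identity \eqref{eq:variation-additivity}, as already noted after Lemma~\ref{lem:finite-chain}. If $s=u$ or $u=t$, one term is zero and the inequality is immediate.

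Theorem~\ref{thm:controlled-selection} then yields a selection $f$ with $f(\theta)=x_\theta$ and $d(f(s),f(t))\le \VH(F;[s,t])$ for all $a\le s<t\le b$. This is the first inequality in \eqref{eq:local-BV-control}.

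For the second inequality, fix $a\le s<t\le b$ and an arbitrary partition $s=t_0<\cdots<t_n=t$. Applying the pointwise bound on each subinterval and then using additivity \eqref{eq:variation-additivity} repeatedly gives
\[
 \sum_{i=1}^n d(f(t_{i-1}),f(t_i))\le\sum_{i=1}^n \VH(F;[t_{i-1},t_i])=\VH(F;[s,t]).
\]
Taking the supremum over all partitions gives $\Var(f;[s,t])\le\VH(F;[s,t])$.

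No step here is a real obstacle. The only point needing care is that the control must be defined on all of $\Omega_I$, including the diagonal, and must be finite there. Both follow from the bounded-variation hypothesis and the convention $Q_V(s,s)=0$. All compactness work is already done inside Theorem~\ref{thm:controlled-selection}.
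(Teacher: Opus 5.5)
Your proposal is correct and follows the paper's proof essentially verbatim: you apply Theorem~\ref{thm:controlled-selection} with the control $Q_V$, verify \eqref{eq:Q-dominates} via \eqref{eq:endpoint-by-variation} and \eqref{eq:Q-superadditive} via \eqref{eq:variation-additivity}, and then sum the pointwise bound over partition cells and take the supremum. Your explicit convention $Q_V(s,s)=0$ on the diagonal is a small clarification that the paper leaves implicit.
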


\begin{proof}
Use the control $Q_V$ defined in Section~\ref{sec:finite-chain}. By \eqref{eq:endpoint-by-variation}, it satisfies \eqref{eq:Q-dominates}, and by \eqref{eq:variation-additivity}, it satisfies \eqref{eq:Q-superadditive} with equality. Theorem~\ref{thm:controlled-selection} therefore gives a prescribed-point selection satisfying the stated distance bound. For any partition $s=t_0<\cdots<t_n=t$,
\[
 \sum_{i=1}^n d(f(t_{i-1}),f(t_i))
 \le\sum_{i=1}^n\VH(F;[t_{i-1},t_i])
 =\VH(F;[s,t]).
\]
Taking the supremum over all partitions proves the stated variation bound.
\end{proof}

\begin{remark}[Relation with the classical BV selection theorem]\label{rem:chistyakov-BV}
Prescribed-point selections of bounded variation with a global estimate
\[
 \Var(f;[a,b])\le \VH(F;[a,b])
\]
are well established in the literature; see, in particular, Chistyakov's Theorem~6.1(b) \cite{Chistyakov1998}. Chistyakov's construction proceeds through finite approximants and a Helly-type compactness argument, whereas the result above is obtained by a different argument. It is the $Q=Q_V$ specialization of Theorem~\ref{thm:controlled-selection}, whose finite-chain constraints are extended to the whole interval by the finite intersection property and Tychonoff compactness.

Chistyakov's Theorem~6.1(b), as stated, gives the variation estimate on the full interval and does not state that the same selection satisfies the corresponding estimate simultaneously on every subinterval. Theorem~\ref{thm:local-BV-prescribed} provides this simultaneous local control: one global prescribed-point selection $f$ satisfies the estimates in \eqref{eq:local-BV-control} for every $[s,t]\subset[a,b]$. Applying the classical theorem separately to individual subintervals would not by itself provide this property for one and the same selection. Thus the Hausdorff variation of $F$ locally controls both the displacement and the total variation of a single prescribed-point selection. Later, this local domination will imply preservation, by the same selection, of every finite Stieltjes--Riesz energy associated with an admissible external clock.
\end{remark}

\section{Preservation of \texorpdfstring{$g$}{g}-H\"older regularity and local variation}\label{sec:main-preservation}

At and above the Lipschitz threshold, $g$-H\"older regularity and local variation require different assumptions for their preservation. For $\alpha\ge1$, a prescribed-point selection can preserve $g$-H\"older regularity without assuming monotonicity of $g$. Local variation requires an additional assumption on $g$: for a general nonmonotone $g$, the corresponding local variation bound may fail, whereas for nondecreasing $g$ one prescribed-point selection preserves both properties.

\begin{theorem}[Regularity preservation without monotonicity]\label{thm:arbitrary-clock-regularity}
Let $(X,d)$ be a metric space, let $g:I\to\mathbb R$, and let $F:I\to\K(X)$. Suppose that, for some $\alpha\ge1$ and $L\ge0$, the inequality
\begin{equation}\label{eq:arbitrary-clock-holder}
 d_H(F(s),F(t))\le L|g(t)-g(s)|^\alpha
\end{equation}
holds for all $s,t\in I$. Then, for every $\theta\in I$ and every $x_\theta\in F(\theta)$, there exists a selection $f:I\to X$ such that $f(\theta)=x_\theta$ and
\begin{equation}\label{eq:arbitrary-clock-selection}
 d(f(s),f(t))\le L|g(t)-g(s)|^\alpha
\end{equation}
for all $s,t\in I$. Consequently, if $L=[F]_{\alpha,g}$, then
\begin{equation}\label{eq:arbitrary-clock-seminorm}
 [f]_{\alpha,g}\le [F]_{\alpha,g}.
\end{equation}
\end{theorem}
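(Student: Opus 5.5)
The plan is to reduce to the nondecreasing case by factoring $F$ through the clock image and reordering time by the value of $g$. As noted after Definition~\ref{def:g-regularity}, the inequality \eqref{eq:arbitrary-clock-holder} forces $F(s)=F(t)$ whenever $g(s)=g(t)$. Hence there is a well-defined map $G:g(I)\to\K(X)$ with $F=G\circ g$ and
\[
d_H(G(u),G(v))\le L|u-v|^\alpha \qquad (u,v\in g(I)).
\]
It then suffices to find a selection $h$ of $G$ with $h(g(\theta))=x_\theta$ and $d(h(u),h(v))\le L|u-v|^\alpha$ on $g(I)$. Setting $f=h\circ g$ gives a selection of $F$, since $h(g(t))\in G(g(t))=F(t)$. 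It passes through the prescribed point, and it satisfies \eqref{eq:arbitrary-clock-selection} directly. This composition is also what makes $f$ constant on level sets of $g$, which any selection satisfying \eqref{eq:arbitrary-clock-selection} must be.

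For the selection of $G$ on the linearly ordered set $S=g(I)\subset\mathbb R$, I would rerun the proof of Theorem~\ref{thm:controlled-selection} with $S$ in place of $I$. Take the control $Q(u,v)=L(v-u)^\alpha$ for $u\le v$ in $S$. It dominates $d_H(G(u),G(v))$, and it is superadditive because $p^\alpha+q^\alpha\le(p+q)^\alpha$ for $\alpha\ge1$. Lemma~\ref{lem:finite-chain} only uses finitely many ordered points and nearest-point choices, so it applies verbatim to a finite chain $u_0<\cdots<u_m$ in $S$ containing $g(\theta)$. The Tychonoff argument in $\prod_{u\in S}G(u)$ then goes through unchanged: the constraint sets are closed and have the finite intersection property. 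This produces $h$.

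Alternatively, I could avoid reproving anything by transporting $S$ into an interval. The obstacle is that $S$ need not be an interval and Theorem~\ref{thm:controlled-selection} is stated on $I=[a,b]$. One could extend $G$ to the closed interval $[\inf S,\sup S]$, but that extension is awkward, since $S$ may be non-closed and the gaps would have to be filled compatibly with compact values. So the cleanest route is the observation that the proofs of Lemma~\ref{lem:finite-chain} and Theorem~\ref{thm:controlled-selection} use only the order structure and the two properties of $Q$. I would state this explicitly as the main justification step.

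Finally, taking $L=[F]_{\alpha,g}$ (the minimum exists by definition), \eqref{eq:arbitrary-clock-selection} shows that $L$ is an admissible constant for $f$. This gives $[f]_{\alpha,g}\le[F]_{\alpha,g}$, which is \eqref{eq:arbitrary-clock-seminorm}.
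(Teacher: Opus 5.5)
Your proposal is correct and follows the paper's proof essentially step for step. The paper likewise factors $F$ through $g(I)$, makes nearest-point choices on finite chains in $g(I)$ using the superadditivity of $u\mapsto Lu^\alpha$ for $\alpha\ge1$, runs the Tychonoff finite-intersection argument on $\prod_{u\in g(I)}\widehat F(u)$, and pulls back by $f=\widehat f\circ g$; your observation that Lemma~\ref{lem:finite-chain} and Theorem~\ref{thm:controlled-selection} use only the order structure and the two properties of $Q$ is exactly what the paper relies on when it reruns those arguments on $g(I)$.
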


\begin{proof}
Since \eqref{eq:arbitrary-clock-holder} holds for all $s,t\in I$, the equality $g(s)=g(t)$ implies $F(s)=F(t)$. Hence the map $\widehat F:g(I)\to\K(X)$ defined by $\widehat F(g(t)):=F(t)$ for $t\in I$ is well defined. Moreover, for all $u,v\in g(I)$,
\begin{equation}\label{eq:image-holder}
 d_H(\widehat F(u),\widehat F(v))
 \le L|u-v|^\alpha.
\end{equation}
Put $u_\theta=g(\theta)\in g(I)$. Consider distinct points $u_0,\ldots,u_m\in g(I)$ satisfying $u_0<u_1<\cdots<u_m$, with $u_\theta=u_k$ for some $k\in\{0,\ldots,m\}$. Set $x_k=x_\theta\in\widehat F(u_k)$. Proceeding to the right and to the left as in Lemma~\ref{lem:finite-chain}, compactness of the values $\widehat F(u_i)$ permits nearest-point choices $x_i\in\widehat F(u_i)$. For every $r=1,\ldots,m$, these choices satisfy
\[
 d(x_{r-1},x_r)
 \le d_H(\widehat F(u_{r-1}),\widehat F(u_r))
 \le L(u_r-u_{r-1})^\alpha.
\]
Therefore, for $0\le i<j\le m$,
\begin{align*}
 d(x_i,x_j)
 &\le L\sum_{r=i+1}^j (u_r-u_{r-1})^\alpha\\
 &\le L\left(\sum_{r=i+1}^j(u_r-u_{r-1})\right)^\alpha\\
 &=L(u_j-u_i)^\alpha,
\end{align*}
where the second inequality uses $\alpha\ge1$. Thus every finite ordered subset of $g(I)$ containing $u_\theta$ admits a prescribed-point selection satisfying all the required pairwise bounds.

Now consider $\widehat{\mathcal P}=\prod_{u\in g(I)}\widehat F(u)$, endowed with the product topology. We apply the Tychonoff finite-intersection argument used in the proof of Theorem~\ref{thm:controlled-selection}, now on $g(I)$. For $u,v\in g(I)$ with $u<v$, impose
\[
 C_{u,v}=\{h\in\widehat{\mathcal P}:d(h(u),h(v))\le L|u-v|^\alpha\},
\]
together with $C_\theta=\{h\in\widehat{\mathcal P}:h(u_\theta)=x_\theta\}$. These sets are closed. Any finite family of such constraints involves only finitely many points of $g(I)$; after adjoining $u_\theta$ if necessary, the finite-chain construction above shows that their intersection is nonempty. Hence, by the same compactness argument as in Theorem~\ref{thm:controlled-selection}, there exists a selection $\widehat f:g(I)\to X$ such that $\widehat f(u)\in\widehat F(u)$ for every $u\in g(I)$, $\widehat f(u_\theta)=x_\theta$, and
\[
 d(\widehat f(u),\widehat f(v))\le L|u-v|^\alpha
\]
for all $u,v\in g(I)$.

Finally, define $f:I\to X$ by $f(t):=\widehat f(g(t))$ for $t\in I$. Then $f(t)\in F(t)$ for every $t\in I$, $f(\theta)=x_\theta$, and \eqref{eq:arbitrary-clock-selection} holds for all $s,t\in I$.
\end{proof}

\begin{remark}\label{rem:arbitrary-clock-variation}
Theorem~\ref{thm:arbitrary-clock-regularity} concerns preservation of $g$-H\"older regularity by a single prescribed-point selection. This estimate alone does not imply the local variation bound $\Var(f;[s,t])\le\VH(F;[s,t])$. In particular, for an arbitrary clock $g$, $g$-H\"older regularity need not even imply finite classical variation. If $F$ has bounded Hausdorff variation, Theorem~\ref{thm:local-BV-prescribed} still provides a prescribed-point selection satisfying this local variation bound without requiring monotonicity of $g$; however, that selection need not be the one preserving the $g$-H\"older seminorm. Example~\ref{ex:nonmonotone-variation-obstruction} shows that, when $g$ is not monotone, the two properties cannot in general be imposed simultaneously on the same selection. When $g$ is nondecreasing, Theorem~\ref{thm:main-preservation} provides one prescribed-point selection preserving both properties.
\end{remark}

\begin{theorem}[Simultaneous regularity and variation preservation]\label{thm:main-preservation}
Let $(X,d)$ be a metric space, let $g:I\to\mathbb R$ be nondecreasing, and let $F:I\to\K(X)$. Suppose that, for some $\alpha\ge1$,
\[
 [F]_{\alpha,g}<\infty.
\]
Then, for every $\theta\in I$ and every $x_\theta\in F(\theta)$, there exists a selection $f:I\to X$ with $f(\theta)=x_\theta$ such that, for every $s,t\in I$ with $s<t$,
\begin{equation}\label{eq:main-preservation}
 d(f(s),f(t))
 \le \VH(F;[s,t])
 \le [F]_{\alpha,g}|g(t)-g(s)|^\alpha,
\end{equation}
and
\begin{equation}\label{eq:variation-preservation}
 \Var(f;[s,t])\le \VH(F;[s,t]).
\end{equation}
Consequently,
\begin{equation}\label{eq:seminorm-preservation}
 [f]_{\alpha,g}\le[F]_{\alpha,g}.
\end{equation}
\end{theorem}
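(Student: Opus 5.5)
The plan is to reduce everything to Theorem~\ref{thm:local-BV-prescribed}. For this we need two facts: that $F$ has bounded Hausdorff variation, and that this variation is bounded by the clock on every subinterval,
\[
 \VH(F;[s,t])\le [F]_{\alpha,g}\,(g(t)-g(s))^\alpha\qquad(a\le s<t\le b).
\]
Once this inequality holds, the selection supplied by Theorem~\ref{thm:local-BV-prescribed} has all the required properties. It satisfies $d(f(s),f(t))\le\VH(F;[s,t])$ and $\Var(f;[s,t])\le\VH(F;[s,t])$, and chaining these with the displayed bound gives \eqref{eq:main-preservation} and \eqref{eq:variation-preservation}. Since $g$ is nondecreasing, $|g(t)-g(s)|=g(t)-g(s)$ for $s<t$, and by symmetry of $d$ the pairwise bound extends to all $s,t$. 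This yields \eqref{eq:seminorm-preservation}.

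To prove the displayed inequality, I would set $L=[F]_{\alpha,g}$ and take an arbitrary partition $s=t_0<\cdots<t_n=t$. By the $g$-H\"older hypothesis and monotonicity of $g$,
\[
 \sum_{i=1}^n d_H(F(t_{i-1}),F(t_i))\le L\sum_{i=1}^n (g(t_i)-g(t_{i-1}))^\alpha .
\]
All increments are nonnegative, so for $\alpha\ge1$ the superadditivity $u^\alpha+v^\alpha\le(u+v)^\alpha$ bounds the right-hand side by $L(g(t)-g(s))^\alpha$. This is the same inequality that makes $Q_{\alpha,g}$ satisfy \eqref{eq:Q-superadditive}. Taking the supremum over partitions gives the bound on $[s,t]$. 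In particular $\VH(F;[a,b])\le L(g(b)-g(a))^\alpha<\infty$, so $F$ has bounded Hausdorff variation and Theorem~\ref{thm:local-BV-prescribed} applies.

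The only real point of care is the order of steps. The bounded-variation hypothesis of Theorem~\ref{thm:local-BV-prescribed} is not assumed, so it must be derived first, and that derivation is exactly where monotonicity of $g$ enters. Without it, the increments $g(t_i)-g(t_{i-1})$ could change sign, and their absolute values would not telescope, consistent with Remark~\ref{rem:arbitrary-clock-variation}. No new compactness argument is needed, because Theorem~\ref{thm:local-BV-prescribed}, via Theorem~\ref{thm:controlled-selection} with $Q=Q_V$, already produces a single prescribed-point selection that works on every subinterval at once.
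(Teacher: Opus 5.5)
Your proposal is correct and matches the paper's proof: you bound $\VH(F;[s,t])\le[F]_{\alpha,g}\,(g(t)-g(s))^\alpha$ by superadditivity of $u\mapsto u^\alpha$ over the nonnegative clock increments, and then invoke Theorem~\ref{thm:local-BV-prescribed}. You also state explicitly that this bound is what supplies the bounded-variation hypothesis of that theorem, a step the paper leaves implicit.
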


\begin{proof}
Put $L=[F]_{\alpha,g}$. Fix $s,t\in I$ with $s<t$, and let $P=\{t_0,\ldots,t_n\}$ be a finite partition of $[s,t]$ with $s=t_0<\cdots<t_n=t$. Then
\[
 \sum_{i=1}^n d_H(F(t_{i-1}),F(t_i))
 \le L\sum_{i=1}^n[g(t_i)-g(t_{i-1})]^\alpha.
\]
Since $g$ is nondecreasing and $\alpha\ge1$,
\[
 \sum_{i=1}^n[g(t_i)-g(t_{i-1})]^\alpha
 \le[g(t)-g(s)]^\alpha.
\]
Recall from Section~\ref{sec:finite-chain} the controls $Q_{\alpha,g}$ and $Q_V$. Taking the supremum over all finite partitions of $[s,t]$ yields
\begin{equation}\label{eq:VH-from-g}
 Q_V(s,t)
 =\VH(F;[s,t])
 \le L[g(t)-g(s)]^\alpha
 =Q_{\alpha,g}(s,t).
\end{equation}

Theorem~\ref{thm:local-BV-prescribed} gives a prescribed-point selection $f:I\to X$ such that, for every $s,t\in I$ with $s<t$,
\[
 d(f(s),f(t))\le Q_V(s,t)=\VH(F;[s,t])
\]
and
\[
 \Var(f;[s,t])\le\VH(F;[s,t]).
\]
Combining the first estimate with \eqref{eq:VH-from-g} gives
\[
 d(f(s),f(t))
 \le \VH(F;[s,t])
 \le L[g(t)-g(s)]^\alpha,
\]
which proves the asserted $g$-H\"older estimate and hence $[f]_{\alpha,g}\le[F]_{\alpha,g}$.
The second estimate is exactly the stated local variation bound.
\end{proof}

\begin{remark}\label{rem:local}
Estimate \eqref{eq:variation-preservation} holds on every subinterval, not only on $[a,b]$. The same selection satisfies both the regularity and variation bounds.
\end{remark}

\begin{corollary}[Local variation preservation at the $g$-Lipschitz boundary]\label{cor:lipschitz}
Let $(X,d)$ be a metric space, let $g:I\to\mathbb R$ be nondecreasing, and let $F:I\to\K(X)$ satisfy $[F]_{1,g}<\infty$. Then, for every $\theta\in I$ and every $x_\theta\in F(\theta)$, there exists a selection $f:I\to X$ with $f(\theta)=x_\theta$ such that, for every $s,t\in I$ with $s<t$,
\[
 \Var(f;[s,t])\le\VH(F;[s,t])
\]
and
\[
 d(f(s),f(t))\le\VH(F;[s,t])\le [F]_{1,g}[g(t)-g(s)].
\]
In particular, $f$ is $g$-Lipschitz and $[f]_{1,g}\le[F]_{1,g}$.
\end{corollary}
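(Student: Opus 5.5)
This corollary is the special case $\alpha=1$ of Theorem~\ref{thm:main-preservation}, so the plan is simply to invoke that theorem with $\alpha=1$ and then read off the conclusions.

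The hypotheses match directly: $g$ is nondecreasing, and $[F]_{1,g}<\infty$ is exactly the assumption $[F]_{\alpha,g}<\infty$ with $\alpha=1\ge1$. Fix $\theta\in I$ and $x_\theta\in F(\theta)$. Theorem~\ref{thm:main-preservation} then provides a selection $f$ with $f(\theta)=x_\theta$ satisfying \eqref{eq:main-preservation}, \eqref{eq:variation-preservation} and \eqref{eq:seminorm-preservation} for all $s<t$.

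Next I would simplify the bound. Since $g$ is nondecreasing and $s<t$, we have $|g(t)-g(s)|^1=g(t)-g(s)$. Hence \eqref{eq:main-preservation} becomes the displayed chain
\[
 d(f(s),f(t))\le\VH(F;[s,t])\le[F]_{1,g}[g(t)-g(s)].
\]
The variation bound $\Var(f;[s,t])\le\VH(F;[s,t])$ is exactly \eqref{eq:variation-preservation}.

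For the final assertion, the chain above gives $d(f(s),f(t))\le [F]_{1,g}|g(t)-g(s)|$ for $s<t$. The cases $s=t$ and $s>t$ follow trivially and by symmetry. So $f$ is $g$-Lipschitz with $[f]_{1,g}\le[F]_{1,g}$, which is \eqref{eq:seminorm-preservation} at $\alpha=1$.

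There is no real obstacle here. The only point to check is that the superadditivity $u^\alpha+v^\alpha\le(u+v)^\alpha$, used inside Theorem~\ref{thm:main-preservation}, still holds at $\alpha=1$. It does, with equality, so the specialization is legitimate.
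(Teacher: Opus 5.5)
Your proposal is correct and is exactly the paper's route: the corollary is the $\alpha=1$ case of Theorem~\ref{thm:main-preservation}, using $|g(t)-g(s)|=g(t)-g(s)$ for nondecreasing $g$ and $s<t$. You also identified the only point needing a check: at $\alpha=1$ the partition estimate $\sum_i[g(t_i)-g(t_{i-1})]^\alpha\le[g(t)-g(s)]^\alpha$ still holds, with equality by telescoping.
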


At $\alpha=1$, Theorem~\ref{thm:arbitrary-clock-regularity} already preserves $g$-Lipschitz regularity without assuming monotonicity of $g$, while Corollary~\ref{cor:lipschitz} additionally preserves local variation when $g$ is nondecreasing. The behavior changes when the exponent exceeds one: at $\alpha=1$, nontrivial continuous variation may still occur.

\begin{example}[Continuous variation at $\alpha=1$]\label{ex:alpha-one}
Let $I=[0,1]$, $g(t)=t$, and let $L>0$ and $R>0$. Set
\[
 F(t)=Lt+\{0,R\}.
\]
Then $[F]_{1,g}=L$. The selection $f(t)=Lt$ satisfies
\begin{equation}\label{eq:alpha-one-sharp}
 \Var(f;[s,t])=\VH(F;[s,t])=L(t-s).
\end{equation}
Thus nontrivial continuous variation can persist at the Lipschitz boundary.
\end{example}

\section{Preservation of Riesz \texorpdfstring{$p$}{p}-variation under external clocks}\label{sec:riesz}

Riesz $p$-variation and its weighted variants are well established \cite{MichtaMotyl2022,AzizGuerreroMerentes2013}. Here we consider the corresponding partition functional with physical-time increments replaced by increments of a nondecreasing external clock $h$. This allows plateaus and jumps of the clock to be reflected directly in the variation. If $h$ does not increase on a partition cell, finiteness forces the state to remain unchanged on that cell; a jump of $h$, on the other hand, can support a corresponding state jump with a finite contribution. The local variation estimate obtained in the previous section will be used to show that one prescribed-point selection preserves all finite Riesz $p$-variations of this form.

\begin{definition}[Riesz $p$-variation with respect to an external clock]\label{def:riesz-energy}
Let $p>1$, let $h:I\to\mathbb R$ be nondecreasing, and let $u:I\to Y$ take values in a metric space $(Y,\rho)$. For every $a\le s<t\le b$, define
\[
 \mathcal R_{p,h}(u;[s,t])
 =\sup_{P}\sum_{i=1}^{n}
 \frac{\rho(u(t_i),u(t_{i-1}))^p}
 {[h(t_i)-h(t_{i-1})]^{p-1}},
\]
where the supremum is over all partitions $P:s=t_0<\cdots<t_n=t$. If $h(t_i)=h(t_{i-1})$, the corresponding term is defined to be $0$ when $u(t_i)=u(t_{i-1})$ and $+\infty$ otherwise. For $F:I\to\K(X)$, the Hausdorff Riesz $p$-variation with respect to $h$, denoted by $\mathcal R^H_{p,h}(F;[s,t])$, is defined by taking $Y=\K(X)$ and $\rho=d_H$.

For degenerate intervals, we adopt the convention $\mathcal R_{p,h}(u;[s,s])=0$ and, correspondingly, $\mathcal R^H_{p,h}(F;[s,s])=0$.
\end{definition}

Finite Riesz $p$-variation with respect to $h$ automatically forces constancy on intervals where $h$ is constant: if $h(s)=h(t)$ and $\mathcal R^H_{p,h}(F;[s,t])<\infty$, then $F$ is constant on $[s,t]$.

\begin{proposition}[Additivity and variation control]\label{prop:riesz-additivity}
Let $(X,d)$ be a metric space, let $F:I\to\K(X)$, let $p>1$, and let $h:I\to\mathbb R$ be nondecreasing. Then, for every $a\le s\le u\le t\le b$ such that $\mathcal R^H_{p,h}(F;[s,t])<\infty$, we have
\[
 \mathcal R^H_{p,h}(F;[s,t])
 =\mathcal R^H_{p,h}(F;[s,u])
 +\mathcal R^H_{p,h}(F;[u,t]),
\]
and
\[
 \VH(F;[s,t])
 \le
 \bigl[\mathcal R^H_{p,h}(F;[s,t])\bigr]^{1/p}
 [h(t)-h(s)]^{1-1/p}.
\]
The same conclusions hold for a map $u:I\to Y$ into a metric space $(Y,\rho)$, with $d_H$ replaced by $\rho$.
\end{proposition}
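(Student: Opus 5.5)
The plan has two parts: additivity, then the variation bound. Throughout, write $\phi(x,\Delta)=x^p/\Delta^{p-1}$ for $x\ge0$ and $\Delta\ge0$, with the convention of Definition~\ref{def:riesz-energy} when $\Delta=0$ (value $0$ if $x=0$, $+\infty$ otherwise). Degenerate cases $s=u$ or $u=t$ are immediate from the convention $\mathcal R^H_{p,h}(F;[s,s])=0$, so assume $s<u<t$.

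\textbf{Additivity.} The inequality $\mathcal R^H_{p,h}(F;[s,u])+\mathcal R^H_{p,h}(F;[u,t])\le\mathcal R^H_{p,h}(F;[s,t])$ is direct. Concatenating a partition of $[s,u]$ with one of $[u,t]$ gives a partition of $[s,t]$ whose sum is the sum of the two. Taking suprema separately yields the inequality; in particular both pieces are finite.

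For the reverse inequality, I will show that inserting a point never decreases the partition sum. This is the main step. Split a cell $[t_{i-1},t_i]$ at $u$, with $x=d_H(F(t_{i-1}),F(u))$, $y=d_H(F(u),F(t_i))$, and clock increments $\Delta_1,\Delta_2\ge0$, $\Delta=\Delta_1+\Delta_2$. The triangle inequality gives $d_H(F(t_{i-1}),F(t_i))\le x+y$, so it suffices to show
\[
 \phi(x+y,\Delta_1+\Delta_2)\le\phi(x,\Delta_1)+\phi(y,\Delta_2).
\]
This holds because $\phi$ is jointly convex and positively $1$-homogeneous, hence subadditive. Concretely, Hölder's inequality with exponents $p$ and $p/(p-1)$ gives
\[
 x+y=\frac{x}{\Delta_1^{1-1/p}}\Delta_1^{1-1/p}+\frac{y}{\Delta_2^{1-1/p}}\Delta_2^{1-1/p}
 \le\bigl(\phi(x,\Delta_1)+\phi(y,\Delta_2)\bigr)^{1/p}(\Delta_1+\Delta_2)^{1-1/p}.
\]
Raising to the $p$-th power proves the claim.

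The zero-increment conventions need a separate check. If the right side is finite, then $\Delta_j=0$ forces the corresponding distance to vanish, and the inequality reduces to the nondegenerate one. If $\Delta=0$, then both $\Delta_j=0$ and $x=y=0$, so the left side is also $0$. Given the point-insertion property, any partition of $[s,t]$ refined by $u$ splits into partitions of $[s,u]$ and $[u,t]$. So every partition sum of $[s,t]$ is bounded by the sum of the two pieces, which gives equality.

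\textbf{Variation bound.} Take a partition $P$ of $[s,t]$ and set $d_i=d_H(F(t_{i-1}),F(t_i))$ and $\Delta_i=h(t_i)-h(t_{i-1})$. Since $\mathcal R^H_{p,h}(F;[s,t])<\infty$, every cell with $\Delta_i=0$ has $d_i=0$ and can be dropped. On the remaining cells, the discrete Hölder inequality gives
\[
 \sum_i d_i=\sum_i \frac{d_i}{\Delta_i^{1-1/p}}\,\Delta_i^{1-1/p}
 \le\Bigl(\sum_i\frac{d_i^p}{\Delta_i^{p-1}}\Bigr)^{1/p}\Bigl(\sum_i\Delta_i\Bigr)^{1-1/p}.
\]
The first factor is at most $\bigl[\mathcal R^H_{p,h}(F;[s,t])\bigr]^{1/p}$. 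The second factor is at most $[h(t)-h(s)]^{1-1/p}$, since the increments telescope and $h$ is nondecreasing. Taking the supremum over $P$ gives the stated bound on $\VH(F;[s,t])$.

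\textbf{General metric spaces.} Neither step uses anything about $\K(X)$ beyond the triangle inequality for $d_H$. So the same argument applies verbatim to $u:I\to Y$ with $\rho$ in place of $d_H$.
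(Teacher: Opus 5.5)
Your proposal is correct and follows the paper's proof essentially step for step. One inequality of the additivity comes from concatenating partitions, and the other from point insertion via $\frac{(x+y)^p}{(\Delta_1+\Delta_2)^{p-1}}\le\frac{x^p}{\Delta_1^{p-1}}+\frac{y^p}{\Delta_2^{p-1}}$ together with the zero-increment conventions. The variation bound then comes from the discrete H\"older inequality on cells with positive clock increment. The only difference is that you justify the two-cell inequality, via H\"older or via joint convexity and positive $1$-homogeneity, whereas the paper simply states it.
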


\begin{proof}
The cases $u=s$ or $u=t$, including the degenerate case $s=t$, follow immediately from the convention for degenerate intervals. We may therefore assume $s<u<t$.

Consider first a partition having a cell $[r,v]$ with $r<u<v$. Put
\[
 x=d_H(F(r),F(u)),\qquad y=d_H(F(u),F(v)),
\]
\[
 a=h(u)-h(r),\qquad b=h(v)-h(u).
\]
The triangle inequality together with
\[
 \frac{(x+y)^p}{(a+b)^{p-1}}
 \le
 \frac{x^p}{a^{p-1}}+\frac{y^p}{b^{p-1}}
\]
shows that inserting $u$ does not decrease the Riesz sum; the zero-increment cases follow from the convention in Definition~\ref{def:riesz-energy}. Hence every partition of $[s,t]$ may be refined to contain $u$ without decreasing its sum. Dividing the interval at $u$ gives
\[
 \mathcal R^H_{p,h}(F;[s,t])
 \le
 \mathcal R^H_{p,h}(F;[s,u])
 +\mathcal R^H_{p,h}(F;[u,t]).
\]
The reverse inequality follows by combining arbitrary partitions of $[s,u]$ and $[u,t]$. Hence the stated additivity follows.

For a partition $s=t_0<\cdots<t_n=t$, write $d_i=d_H(F(t_{i-1}),F(t_i))$ and $a_i=h(t_i)-h(t_{i-1})$. Finiteness of $\mathcal R^H_{p,h}(F;[s,t])$ implies $d_i=0$ whenever $a_i=0$. H\"older's inequality on the remaining cells gives
\[
 \sum_i d_i
 \le
 \left(\sum_i\frac{d_i^p}{a_i^{p-1}}\right)^{1/p}
 \left(\sum_i a_i\right)^{1-1/p}.
\]
The first factor is bounded by $[\mathcal R^H_{p,h}(F;[s,t])]^{1/p}$, while $\sum_i a_i=h(t)-h(s)$. Taking the supremum over all partitions yields the stated variation bound. The single-valued case is identical.
\end{proof}

\begin{theorem}[Simultaneous preservation of Riesz $p$-variation under external clocks]\label{thm:universal-riesz}
Let $(X,d)$ be a metric space, let $F:I\to\K(X)$ have bounded Hausdorff variation, and let $f:I\to X$ be a selection of $F$ satisfying $d(f(s),f(t))\le\VH(F;[s,t])$ for every $a\le s<t\le b$. Then, for every $p>1$ and every nondecreasing external clock $h:I\to\mathbb R$ such that $\mathcal R^H_{p,h}(F;I)<\infty$, the same selection satisfies
\[
 \mathcal R_{p,h}(f;[s,t])
 \le
 \mathcal R^H_{p,h}(F;[s,t])
\]
for every $a\le s<t\le b$. Hence the prescribed-point selection supplied by Theorem~\ref{thm:local-BV-prescribed} preserves all finite Riesz $p$-variations of this form.
\end{theorem}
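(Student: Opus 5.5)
We fix $p>1$, a nondecreasing $h$ with $\mathcal R^H_{p,h}(F;I)<\infty$, an interval $[s,t]$, and a partition $s=t_0<\cdots<t_n=t$. The plan is to bound each term of the Riesz sum for $f$ by the Riesz variation of $F$ on the corresponding cell. Additivity from Proposition~\ref{prop:riesz-additivity} will then let us sum these cell bounds. First, finiteness on $I$ propagates to subintervals: applying additivity twice with $[a,s]$, $[s,t]$, $[t,b]$ gives $\mathcal R^H_{p,h}(F;[t_{i-1},t_i])<\infty$ for every cell, since all Riesz variations are nonnegative.

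On a single cell $[r,v]=[t_{i-1},t_i]$ with $a_i=h(v)-h(r)$, we argue as follows. By hypothesis $d(f(r),f(v))\le\VH(F;[r,v])$. Proposition~\ref{prop:riesz-additivity} then gives
\[
 d(f(r),f(v))\le\bigl[\mathcal R^H_{p,h}(F;[r,v])\bigr]^{1/p}a_i^{1-1/p}.
\]
If $a_i>0$, we raise this to the $p$-th power and divide by $a_i^{p-1}$. This yields
\[
 \frac{d(f(r),f(v))^p}{a_i^{p-1}}\le\mathcal R^H_{p,h}(F;[r,v]).
\]
If $a_i=0$, the same inequality gives $d(f(r),f(v))\le 0$, because the Riesz variation of $F$ on the cell is finite. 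So $f(r)=f(v)$, and by the convention in Definition~\ref{def:riesz-energy} the term is $0$, which is also bounded by $\mathcal R^H_{p,h}(F;[r,v])$. The only care needed is the zero-increment case of the bound in Proposition~\ref{prop:riesz-additivity}. There the H\"older step is vacuous, since every $d_i$ vanishes, so the bound $\VH\le 0$ holds. Otherwise the argument is routine.

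Summing over cells and applying additivity, extended inductively to $n$ pieces, we obtain
\[
 \sum_{i=1}^n\frac{d(f(t_{i-1}),f(t_i))^p}{[h(t_i)-h(t_{i-1})]^{p-1}}\le\sum_{i=1}^n\mathcal R^H_{p,h}(F;[t_{i-1},t_i])=\mathcal R^H_{p,h}(F;[s,t]).
\]
Taking the supremum over partitions gives $\mathcal R_{p,h}(f;[s,t])\le\mathcal R^H_{p,h}(F;[s,t])$. The final sentence of the statement follows because the selection of Theorem~\ref{thm:local-BV-prescribed} satisfies $d(f(s),f(t))\le\VH(F;[s,t])$ and does not depend on $p$ or $h$. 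So this one selection works simultaneously for all admissible $(p,h)$.
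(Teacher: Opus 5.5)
Your proposal is correct and follows essentially the same route as the paper: you bound each cell's term by $\mathcal R^H_{p,h}(F;[t_{i-1},t_i])$ using the displacement hypothesis and the variation bound of Proposition~\ref{prop:riesz-additivity}, treat zero clock increments separately, then sum via interval additivity and take the supremum over partitions. You also check explicitly that finiteness of $\mathcal R^H_{p,h}(F;I)$ passes to every subinterval, which is needed to apply the proposition on each cell and to use additivity; the paper leaves this step implicit.
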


\begin{proof}
Fix $p>1$, a nondecreasing external clock $h$, and $a\le s<t\le b$. By Proposition~\ref{prop:riesz-additivity},
\[
 \VH(F;[s,t])
 \le
 \bigl[\mathcal R^H_{p,h}(F;[s,t])\bigr]^{1/p}
 [h(t)-h(s)]^{1-1/p}.
\]
Applying this estimate on each partition cell with positive $h$-increment and using $d(f(t_{i-1}),f(t_i))\le \VH(F;[t_{i-1},t_i])$ gives
\[
 \frac{d(f(t_{i-1}),f(t_i))^p}
 {[h(t_i)-h(t_{i-1})]^{p-1}}
 \le
 \mathcal R^H_{p,h}(F;[t_{i-1},t_i]).
\]
If the clock increment on a partition cell is zero, finiteness of $\mathcal R^H_{p,h}(F;I)$ forces $F$ to be constant on that cell. The local displacement bound then also forces $f$ to be constant there, so the corresponding contribution is zero. Summing over the partition and using interval additivity from Proposition~\ref{prop:riesz-additivity}, then taking the supremum over all partitions of $[s,t]$, proves the claim.
\end{proof}

The next example shows the role of jumps of the external clock. A state jump has infinite Riesz $p$-variation with respect to physical time, but it may have finite Riesz $p$-variation when the external clock jumps at the same point.

\begin{example}[A jump of the external clock gives finite Riesz $p$-variation]\label{ex:riesz-one-jump}
Let $p>1$, let $m>0$, and set
\[
 h(t)=\begin{cases}
 0,&t\le\tau,\\
 m,&t>\tau,
 \end{cases}
 \qquad
 u(t)=\begin{cases}
 x_0,&t\le\tau,\\
 x_1,&t>\tau,
 \end{cases}
\]
with $J=\rho(x_0,x_1)>0$. With physical time as the reference clock, take a partition cell $[r,v]$ with $r<\tau<v$. Its contribution across the jump is $J^p/(v-r)^{p-1}$. As $r\uparrow\tau$ and $v\downarrow\tau$, this quantity tends to $+\infty$. Hence $u$ has infinite Riesz $p$-variation with respect to physical time.

With respect to the external clock $h$, partition cells on which the state does not change have zero contribution, while the cell containing the jump has a state change of size $J$ and a clock increment of size $m$. Therefore
\[
 \mathcal R_{p,h}(u;I)=\frac{J^p}{m^{p-1}}<\infty.
\]
Thus the same state jump that gives infinite Riesz $p$-variation with respect to physical time has finite Riesz $p$-variation when the external clock has a jump of size $m$ at the same point.
\end{example}

\section{Partition power sums and clock jumps}\label{sec:atomic-power}

For $\alpha>1$, the continuous and jump parts of a Stieltjes clock contribute differently to power sums of its increments. The theorem below shows that, after taking the infimum over finite partitions, the continuous contribution disappears and only the jumps remain.

Throughout this section, let $g:I\to\mathbb R$ be left-continuous and nondecreasing, and let $\alpha>1$.

We use the left-continuous Stieltjes convention
\begin{equation}\label{eq:stieltjes-convention}
 \mu_g([s,t))=g(t)-g(s).
\end{equation}
Hence
\begin{equation}\label{eq:stieltjes-atom}
 \mu_g(\{\tau\})=g(\tau+)-g(\tau)=\Jump g(\tau).
\end{equation}

\begin{lemma}[Small-mass partition]\label{lem:small-mass}
Let $\mu$ be a finite Borel measure on $[u,v)$ and suppose
\[
 \mu(\{x\})\le\delta\qquad(x\in[u,v))
\]
for some $\delta>0$. Then there is a finite partition
\[
 u=r_0<r_1<\cdots<r_N=v
\]
such that
\begin{equation}\label{eq:small-mass-cells}
 \mu([r_{j-1},r_j))\le2\delta,
 \qquad j=1,\ldots,N.
\end{equation}
\end{lemma}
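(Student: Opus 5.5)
The plan is a greedy left-to-right construction driven by the distribution function of $\mu$. Put $G(x)=\mu([u,x))$ for $x\in[u,v]$. Then $G$ is nondecreasing and left-continuous, with $G(u)=0$ and $G(v)=\mu([u,v))<\infty$. Its right jumps are $G(x+)-G(x)=\mu(\{x\})\le\delta$. For any $r<x$ in $[u,v]$ we have $\mu([r,x))=G(x)-G(r)$. The cells $[r_{j-1},r_j)$ are therefore controlled entirely by increments of $G$, and the task reduces to a statement about a monotone function whose jumps are at most $\delta$.

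\textbf{Choosing the next point.} Given $r_{j-1}<v$, set
\[
 r_j:=\sup\{x\in(r_{j-1},v]: G(x)-G(r_{j-1})\le 2\delta\}.
\]
This set is nonempty: $G(r_{j-1}+)-G(r_{j-1})=\mu(\{r_{j-1}\})\le\delta$, so points slightly to the right of $r_{j-1}$ qualify. Left-continuity of $G$ then gives $G(r_j)-G(r_{j-1})\le2\delta$, i.e.\ \eqref{eq:small-mass-cells} holds for the cell $[r_{j-1},r_j)$, and $r_j>r_{j-1}$.

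\textbf{Each non-final step gains mass at least $\delta$.} Suppose $r_j<v$. Then every $x>r_j$ violates the defining inequality, so $G(x)-G(r_{j-1})>2\delta$. Letting $x\downarrow r_j$ gives $G(r_j+)-G(r_{j-1})\ge2\delta$. Since $G(r_j+)-G(r_j)=\mu(\{r_j\})\le\delta$, we obtain
\[
 \mu([r_{j-1},r_j))=G(r_j)-G(r_{j-1})\ge\delta.
\]
This is where the atom bound $\delta$ and the slack factor $2$ enter.

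\textbf{Termination and the main obstacle.} Because each non-final cell carries $\mu$-mass at least $\delta>0$ and the cells are disjoint, there are at most $\mu([u,v))/\delta$ of them. The process must therefore reach $r_N=v$ after finitely many steps. The main obstacle is exactly this: showing that the iteration cannot accumulate strictly below $v$. It is resolved by the mass-gain lower bound above, which in turn depends on the single estimate $G(r_j+)\ge G(r_{j-1})+2\delta$ combined with the jump bound. Once termination is established, the points $u=r_0<r_1<\cdots<r_N=v$ form the required partition.
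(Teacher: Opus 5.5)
Your proof is correct, but you construct the partition differently from the paper.

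The paper fixes the mass levels $k\delta$ in advance and takes generalized quantiles $q_k=\inf\{x\in[u,v):H(x+)\ge k\delta\}$. It proves $(k-1)\delta\le H(q_k)\le k\delta$ and discards repeated points. It then checks the cell masses case by case: interior cells through the largest index represented at a point, and separate arguments for the first and last cells.

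You instead choose each new point greedily, as far to the right as the $2\delta$ budget allows. The cell bound then follows directly from left-continuity, and the only thing left to prove is termination. You get termination from the lower bound $\mu([r_{j-1},r_j))\ge\delta$ on every non-final cell.

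Both arguments use the same two facts: left-continuity of $x\mapsto\mu([u,x))$, and $G(x+)-G(x)=\mu(\{x\})\le\delta$. Your greedy version avoids the bookkeeping with coincident quantiles and the special endpoint cases, and it gives the explicit count $N\le\mu([u,v))/\delta+1$. The paper's construction is non-recursive: it defines all partition points at once at prescribed mass levels, so it needs no termination argument.
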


\begin{proof}
See Appendix~\ref{app:small-mass-proof}, \emph{Proof of the small-mass partition lemma}.
\end{proof}

For every $a\le s<t\le b$ and every finite partition
\[
 P:s=t_0<t_1<\cdots<t_n=t
\]
of $[s,t]$, define
\begin{equation}\label{eq:partition-power-sum}
 S_{\alpha,g}(P;[s,t])
 =\sum_{i=1}^n[g(t_i)-g(t_{i-1})]^\alpha.
\end{equation}

\begin{theorem}[Atomic power-sum identity]\label{thm:atomic-power}
Let $g:I\to\mathbb R$ be left-continuous and nondecreasing, and let $\alpha>1$. Then, for every $a\le s<t\le b$,
\begin{equation}\label{eq:atomic-power-identity}
 \inf_P S_{\alpha,g}(P;[s,t])
 =\sum_{\substack{\tau\in[s,t)\\ \Jump g(\tau)>0}}[\Jump g(\tau)]^\alpha.
\end{equation}
Here the infimum is taken over all finite partitions $P$ of $[s,t]$. Equivalently, the sum on the right is taken over the atoms of the Stieltjes measure $\mu_g$ contained in $[s,t)$, or, equivalently, over the jump points of $g$ in that interval. The identity holds without any restriction on the number or distribution of these jumps.
\end{theorem}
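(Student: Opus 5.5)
We prove the two inequalities in \eqref{eq:atomic-power-identity} separately. Throughout we write the increments as masses, $g(t_i)-g(t_{i-1})=\mu_g([t_{i-1},t_i))$ by \eqref{eq:stieltjes-convention}. The right-hand side is finite, since it is at most $\bigl(\sum_\tau \Jump g(\tau)\bigr)^\alpha\le (g(t)-g(s))^\alpha$ by superadditivity of $x\mapsto x^\alpha$.

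\emph{Lower bound.} Let $P$ be any partition of $[s,t]$. Each atom $\tau\in[s,t)$ lies in exactly one cell $[t_{i-1},t_i)$. Since $\mu_g$ is a nonnegative measure,
\[
\mu_g([t_{i-1},t_i))\ge\sum_{\tau\in[t_{i-1},t_i)}\Jump g(\tau).
\]
Because $\alpha>1$, the map $x\mapsto x^\alpha$ is superadditive on countable sums of nonnegative terms, so
\[
[g(t_i)-g(t_{i-1})]^\alpha\ge\sum_{\tau\in[t_{i-1},t_i)}[\Jump g(\tau)]^\alpha.
\]
The countable case follows from the finite case by taking a limit. Summing over $i$ gives $S_{\alpha,g}(P;[s,t])\ge$ the right-hand side for every $P$, hence also for the infimum.

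\emph{Upper bound.} Fix $\varepsilon>0$ and choose $\delta>0$ small; how small is fixed below. Only finitely many atoms $\tau_1<\cdots<\tau_k$ in $[s,t)$ have mass $>\delta$. We isolate each of them using left-continuity. For each $\tau_j$ pick $r_j\in(\tau_j,t]$ so close to $\tau_j$ that $\mu_g((\tau_j,r_j))=g(r_j)-g(\tau_j+)<\eta$, where $\eta$ is a second small parameter. This is possible because $g(r)\downarrow g(\tau_j+)$ as $r\downarrow\tau_j$. The cell $[\tau_j,r_j)$ then has mass in $[\Jump g(\tau_j),\Jump g(\tau_j)+\eta)$. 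Choosing the $r_j$ small enough also keeps these cells disjoint.

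The complement of these cells in $[s,t)$ is a finite union of intervals $[u,v)$. Restricted to each such interval, $\mu_g$ has all atoms of mass $\le\delta$. Lemma~\ref{lem:small-mass} therefore subdivides each $[u,v)$ into cells of mass $\le2\delta$. Let $M=g(t)-g(s)$. The small cells contribute
\[
\sum c_\ell^\alpha\le(2\delta)^{\alpha-1}\sum c_\ell\le(2\delta)^{\alpha-1}M,
\]
which is small because $\alpha>1$. The isolating cells contribute at most $\sum_{j}(\Jump g(\tau_j)+\eta)^\alpha$, and this tends to $\sum_j[\Jump g(\tau_j)]^\alpha\le\sum_{\tau}[\Jump g(\tau)]^\alpha$ as $\eta\to0$ with $\delta$ fixed. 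Choosing first $\delta$ with $(2\delta)^{\alpha-1}M<\varepsilon/2$, and then $\eta$, gives a partition with $S_{\alpha,g}(P;[s,t])\le\text{RHS}+\varepsilon$.

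The main obstacle is the bookkeeping at the endpoints of the isolating cells, so that the result is a genuine partition of $[s,t]$ with the right-open cell convention. Left-continuity is needed here: the mass of $[\tau_j,r_j)$ is $g(r_j)-g(\tau_j)$, which captures the jump at $\tau_j$ on the right side. The case $\tau_j=s$ is harmless, and $t$ is never an atom in $[s,t)$. The small-mass lemma is applied only to the restriction of $\mu_g$ to each leftover interval $[u,v)$, whose atoms are all $\le\delta$ by construction. Its proof is deferred to the appendix and assumed here.
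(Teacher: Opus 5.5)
Your proposal is correct and follows essentially the same route as the paper: the lower bound comes from superadditivity of $x\mapsto x^\alpha$ applied cellwise to $\mu_g([t_{i-1},t_i))\ge\sum_{\tau\in[t_{i-1},t_i)}\Jump g(\tau)$, and the upper bound comes from isolating the finitely many atoms of mass $>\delta$ in short cells $[\tau_j,r_j)$ and cutting the leftover intervals with Lemma~\ref{lem:small-mass}, so that those cells contribute at most $(2\delta)^{\alpha-1}M$. The differences from the paper are cosmetic: you use a single $\eta$ instead of the paper's $\eta_j$, you show finiteness of the atomic sum via superadditivity rather than via $M^{\alpha-1}\sum m_\tau$, and you do not treat $M=0$ separately.
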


\begin{proof}
Fix $a\le s<t\le b$, and write
\[
 m_\tau=\Jump g(\tau),\qquad M=g(t)-g(s).
\]
If $M=0$, then $\mu_g([s,t))=0$, so there are no positive atoms in $[s,t)$ and, for every finite partition $P:s=t_0<\cdots<t_n=t$, one has $g(t_i)-g(t_{i-1})=0$ for $i=1,\ldots,n$. Hence both sides of \eqref{eq:atomic-power-identity} are zero. We may therefore assume $M>0$.

Since
\[
 \sum_{\tau\in[s,t)}m_\tau\le M,
\]
the set of positive atoms in $[s,t)$ is at most countable. Moreover,
\[
 \sum_{\tau\in[s,t)}m_\tau^\alpha
 \le M^{\alpha-1}\sum_{\tau\in[s,t)}m_\tau
 \le M^\alpha,
\]
so the atomic power sum is finite.

For the lower bound, let
\[
 P:s=t_0<\cdots<t_n=t
\]
be an arbitrary finite partition of $[s,t]$, and put $I_i=[t_{i-1},t_i)$. Then
\[
 g(t_i)-g(t_{i-1})=\mu_g(I_i)\ge\sum_{\tau\in I_i}m_\tau.
\]
Since $\alpha>1$ and the terms are nonnegative,
\[
 [g(t_i)-g(t_{i-1})]^\alpha
 \ge\sum_{\tau\in I_i}m_\tau^\alpha.
\]
Summing over all partition cells gives
\[
 S_{\alpha,g}(P;[s,t])\ge\sum_{\tau\in[s,t)}m_\tau^\alpha.
\]
Since $P$ was arbitrary,
\[
 \inf_P S_{\alpha,g}(P;[s,t])
 \ge\sum_{\tau\in[s,t)}m_\tau^\alpha.
\]

For the reverse inequality, fix $\varepsilon>0$. Choose $\delta>0$ sufficiently small that
\begin{equation}\label{eq:delta-choice}
 (2\delta)^{\alpha-1}M<\frac{\varepsilon}{2}.
\end{equation}
The set
\[
 A_\delta=\{\tau\in[s,t):m_\tau>\delta\}
\]
is finite. List its elements as $\tau_1,\ldots,\tau_N$.

For each $\tau_j$, continuity from above of the finite measure $\mu_g$ gives
\[
 \mu_g([\tau_j,r))\longrightarrow\mu_g(\{\tau_j\})=m_{\tau_j}
 \qquad\text{as }r\downarrow\tau_j.
\]
Since $A_\delta$ is finite, the endpoints $r_j$ can therefore be chosen so that the intervals $[\tau_j,r_j)$ are pairwise disjoint, contain no other point of $A_\delta$, and satisfy
\[
 \mu_g([\tau_j,r_j))<m_{\tau_j}+\eta_j,
\]
where the positive numbers $\eta_j$ are chosen sufficiently small that
\begin{equation}\label{eq:large-atoms-approx}
 \sum_{j=1}^N(m_{\tau_j}+\eta_j)^\alpha
 <\sum_{j=1}^Nm_{\tau_j}^\alpha+\frac{\varepsilon}{2}.
\end{equation}
Insert all $\tau_j$ and $r_j$ into a partition of $[s,t]$. The complement of the resulting large-atom cells is a finite union of half-open intervals on which every atom has mass at most $\delta$. Apply Lemma~\ref{lem:small-mass} to each of these intervals.

Thus every residual partition cell $C_i$ satisfies $\mu_g(C_i)\le2\delta$. Consequently,
\[
 \sum_i\mu_g(C_i)^\alpha
 \le(2\delta)^{\alpha-1}\sum_i\mu_g(C_i)
 \le(2\delta)^{\alpha-1}M
 <\frac{\varepsilon}{2}.
\]
The power-sum contribution of each large-atom cell $[\tau_j,r_j)$ is bounded by
\[
 [\mu_g([\tau_j,r_j))]^\alpha
 <(m_{\tau_j}+\eta_j)^\alpha.
\]
Combining the large-atom cells with the residual cells therefore gives a finite partition $P_\varepsilon$ of $[s,t]$ such that
\[
 S_{\alpha,g}(P_\varepsilon;[s,t])
 <\sum_{\tau\in[s,t)}m_\tau^\alpha+\varepsilon.
\]
Hence
\[
 \inf_P S_{\alpha,g}(P;[s,t])
 \le\sum_{\tau\in[s,t)}m_\tau^\alpha.
\]
Together with the lower bound, this proves \eqref{eq:atomic-power-identity}.
\end{proof}

\begin{remark}\label{rem:atomic-interpretation}
Identity \eqref{eq:atomic-power-identity} shows exactly what remains under partition refinement. Nonatomic increments can be subdivided into arbitrarily small increments, and their $\alpha$-power contribution disappears under refinement when $\alpha>1$. A clock jump cannot be removed by interval subdivision and leaves the contribution $[\Jump g(\tau)]^\alpha$. For instance, if a clock increment of size $M$ is spread continuously over $n$ equal cells, its contribution is $M^\alpha n^{1-\alpha}\to0$, whereas a single jump of $g$ of size $M$ contributes the fixed amount $M^\alpha$.
\end{remark}

\section{Variation controlled by clock jumps for \texorpdfstring{$g$}{g}-H\"older maps with exponent \texorpdfstring{$\alpha>1$}{alpha > 1}}\label{sec:atomic-variation}

The atomic power-sum identity converts the $g$-H\"older estimate with exponent $\alpha>1$ into direct bounds for variation. In particular, the continuous part of a nondecreasing Stieltjes clock contributes no variation; only its jumps remain in the resulting estimates.

Throughout this section, let $g:I\to\mathbb R$ be left-continuous and nondecreasing, let $\alpha>1$, and let $F:I\to\K(X)$ be $g$-H\"older of exponent $\alpha$, with $L=[F]_{\alpha,g}<\infty$.

\begin{proposition}[Endpoint estimate in terms of clock jumps]\label{prop:atomic-endpoint}
Let $(X,d)$ be a metric space, let $g:I\to\mathbb R$ be left-continuous and nondecreasing, let $\alpha>1$, and let $F:I\to\K(X)$ be $g$-H\"older of exponent $\alpha$ with $L=[F]_{\alpha,g}<\infty$. Then, for every $a\le s<t\le b$,
\begin{equation}\label{eq:atomic-endpoint}
 d_H(F(s),F(t))
 \le L\sum_{\tau\in[s,t)}[\Jump g(\tau)]^\alpha.
\end{equation}
\end{proposition}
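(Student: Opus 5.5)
The plan is to combine the triangle inequality along a partition with the atomic power-sum identity, Theorem~\ref{thm:atomic-power}. Fix $a\le s<t\le b$ and an arbitrary finite partition $P:s=t_0<\cdots<t_n=t$. First, by the triangle inequality for $d_H$ on $\K(X)$ and the $g$-H\"older bound with $L=[F]_{\alpha,g}$, we have
\[
 d_H(F(s),F(t))\le\sum_{i=1}^n d_H(F(t_{i-1}),F(t_i))\le L\sum_{i=1}^n[g(t_i)-g(t_{i-1})]^\alpha=L\,S_{\alpha,g}(P;[s,t]),
\]
where monotonicity of $g$ lets us drop the absolute values. Equivalently, this is the estimate $d_H(F(s),F(t))\le\VH(F;[s,t])$ from \eqref{eq:endpoint-by-variation}, together with a cell-by-cell bound.

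Since the left-hand side does not depend on $P$, taking the infimum over all finite partitions gives
\[
 d_H(F(s),F(t))\le L\inf_P S_{\alpha,g}(P;[s,t]).
\]
Theorem~\ref{thm:atomic-power} applies because $g$ is left-continuous and nondecreasing and $\alpha>1$. It identifies this infimum with $\sum_{\tau\in[s,t)}[\Jump g(\tau)]^\alpha$, which yields \eqref{eq:atomic-endpoint}.

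There is no genuine obstacle here; all the analytic work sits in Theorem~\ref{thm:atomic-power}, namely the small-mass partition and the handling of large atoms. The only points to check are two. First, the case $L=0$ and the case $g(t)=g(s)$ need no separate treatment: if $g(t)=g(s)$, both sides vanish. Second, the atom sum is taken over the half-open interval $[s,t)$, which matches the convention $\mu_g([s,t))=g(t)-g(s)$ used in that theorem.
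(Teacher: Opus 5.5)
Your proposal is correct and follows the paper's proof exactly: it applies the triangle inequality along an arbitrary partition, bounds each cell by the $g$-H\"older estimate to get $L\,S_{\alpha,g}(P;[s,t])$, takes the infimum over partitions, and invokes Theorem~\ref{thm:atomic-power}. Your side remarks on the degenerate cases and the half-open interval $[s,t)$ are accurate, though the proof does not need them.
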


\begin{proof}
Let $P:s=t_0<\cdots<t_n=t$ be an arbitrary finite partition of $[s,t]$. By the triangle inequality and the $g$-H\"older estimate,
\[
 d_H(F(s),F(t))
 \le\sum_{i=1}^n d_H(F(t_{i-1}),F(t_i))
 \le L S_{\alpha,g}(P;[s,t]).
\]
Taking the infimum over all finite partitions $P$ of $[s,t]$ and applying Theorem~\ref{thm:atomic-power} gives \eqref{eq:atomic-endpoint}.
\end{proof}

\begin{theorem}[Variation bound in terms of clock jumps]\label{thm:atomic-variation}
Let $(X,d)$ be a metric space, let $g:I\to\mathbb R$ be left-continuous and nondecreasing, let $\alpha>1$, and let $F:I\to\K(X)$ be $g$-H\"older of exponent $\alpha$. Set $L=[F]_{\alpha,g}<\infty$. Then, for every $a\le s<t\le b$,
\begin{equation}\label{eq:atomic-variation}
 \VH(F;[s,t])
 \le L\sum_{\tau\in[s,t)}[\Jump g(\tau)]^\alpha.
\end{equation}
\end{theorem}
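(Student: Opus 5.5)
Fix $a\le s<t\le b$ and an arbitrary finite partition $P:s=t_0<\cdots<t_n=t$. The plan is to bound each summand $d_H(F(t_{i-1}),F(t_i))$ of the variation sum by Proposition~\ref{prop:atomic-endpoint} applied on the cell $[t_{i-1},t_i]$. This gives
\[
 d_H(F(t_{i-1}),F(t_i))\le L\sum_{\tau\in[t_{i-1},t_i)}[\Jump g(\tau)]^\alpha .
\]
Since the half-open cells $[t_{i-1},t_i)$, $i=1,\ldots,n$, are pairwise disjoint and their union is exactly $[s,t)$, summing over $i$ regroups the atomic sums without overlap or omission. The result is
\[
 \sum_{i=1}^n d_H(F(t_{i-1}),F(t_i))\le L\sum_{\tau\in[s,t)}[\Jump g(\tau)]^\alpha .
\]
The right-hand side does not depend on $P$, so taking the supremum over all partitions of $[s,t]$ yields \eqref{eq:atomic-variation}.

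The only points needing care are bookkeeping. First, the atomic sums consist of nonnegative terms over an at most countable set. The proof of Theorem~\ref{thm:atomic-power} shows they are finite, bounded by $M^\alpha$. Splitting such a sum over a disjoint decomposition of $[s,t)$ is therefore legitimate, by unconditional summation of nonnegative series.

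Second, one must check that the half-open convention matches the left-continuous Stieltjes convention \eqref{eq:stieltjes-convention}. The atom at the right endpoint $t_i$ of a cell is not counted in that cell but in the next one, $[t_i,t_{i+1})$. The atom at $t$ itself is not counted at all, consistent with the statement. So no jump is double-counted.

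I expect no real obstacle; the heart of the matter is already in Theorem~\ref{thm:atomic-power} and Proposition~\ref{prop:atomic-endpoint}. If anything, the subtle step is confirming that Proposition~\ref{prop:atomic-endpoint} applies on each subinterval with the same constant $L$. It does, because the $g$-H\"older inequality is imposed for all pairs in $I$.
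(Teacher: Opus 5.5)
Your proof is correct and is essentially the paper's argument: apply Proposition~\ref{prop:atomic-endpoint} on each cell $[t_{i-1},t_i]$, sum the atomic bounds over the half-open cells (which partition $[s,t)$), and take the supremum over partitions. Your bookkeeping remarks on the half-open convention are sound. The finiteness bound $M^\alpha$ is not even needed, since a sum of nonnegative terms always splits over a disjoint decomposition.
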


\begin{proof}
Let $P:s=t_0<\cdots<t_n=t$ be an arbitrary finite partition of $[s,t]$. Applying Proposition~\ref{prop:atomic-endpoint} to each partition cell gives
\[
 d_H(F(t_{i-1}),F(t_i))
 \le L\sum_{\tau\in[t_{i-1},t_i)}[\Jump g(\tau)]^\alpha.
\]
The half-open cells $[t_{i-1},t_i)$ are pairwise disjoint and their union is $[s,t)$. Summing over the partition cells and then taking the supremum over all finite partitions gives \eqref{eq:atomic-variation}.
\end{proof}

Together with Theorem~\ref{thm:main-preservation}, this yields for the same prescribed-point selection
\begin{equation}\label{eq:atomic-selection-bound}
 \Var(f;[s,t])
 \le\VH(F;[s,t])
 \le L\sum_{\tau\in[s,t)}[\Jump g(\tau)]^\alpha.
\end{equation}

Define the atomic measure
\begin{equation}\label{eq:atomic-measure}
 \lambda_{\alpha,g}
 =\sum_{\tau\in[a,b)}[\Jump g(\tau)]^\alpha\delta_\tau.
\end{equation}
Its mass at a jump point $\tau$ of $g$ is $\lambda_{\alpha,g}(\{\tau\})=[\Jump g(\tau)]^\alpha$. Let
\begin{equation}\label{eq:atomic-clock}
 A_{\alpha,g}(t)=\lambda_{\alpha,g}([a,t)),\qquad a\le t\le b,
\end{equation}
be the cumulative function associated with $\lambda_{\alpha,g}$. Then, for every $a\le s<t\le b$,
\[
 A_{\alpha,g}(t)-A_{\alpha,g}(s)
 =\lambda_{\alpha,g}([s,t))
 =\sum_{\tau\in[s,t)}[\Jump g(\tau)]^\alpha.
\]

\begin{proposition}[Reduction to the cumulative function of $\lambda_{\alpha,g}$]\label{prop:atomic-clock}
Let $(X,d)$ be a metric space, let $g:I\to\mathbb R$ be left-continuous and nondecreasing, let $\alpha>1$, and let $F:I\to\K(X)$ be $g$-H\"older of exponent $\alpha$ with $L=[F]_{\alpha,g}<\infty$. Fix $\theta\in I$ and $x_\theta\in F(\theta)$, and let $f:I\to X$ be the prescribed-point selection supplied by Theorem~\ref{thm:main-preservation}, with $f(\theta)=x_\theta$. Then both $F$ and $f$ are Lipschitz with respect to $A_{\alpha,g}$, with constant at most $L$. More precisely, for every $a\le s<t\le b$,
\begin{equation}\label{eq:atomic-clock-lip}
 d_H(F(s),F(t))
 \le L|A_{\alpha,g}(t)-A_{\alpha,g}(s)|,
\end{equation}
and
\[
 d(f(s),f(t))
 \le L|A_{\alpha,g}(t)-A_{\alpha,g}(s)|.
\]
\end{proposition}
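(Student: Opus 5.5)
The plan is to read off both estimates from results already in hand. For $F$, Proposition~\ref{prop:atomic-endpoint} gives, for $a\le s<t\le b$,
\[
 d_H(F(s),F(t))\le L\sum_{\tau\in[s,t)}[\Jump g(\tau)]^\alpha .
\]
By the computation following \eqref{eq:atomic-clock}, the right-hand side equals $L\bigl(A_{\alpha,g}(t)-A_{\alpha,g}(s)\bigr)$. Since $\lambda_{\alpha,g}$ is a nonnegative measure, $A_{\alpha,g}$ is nondecreasing, so this difference coincides with $|A_{\alpha,g}(t)-A_{\alpha,g}(s)|$. That proves \eqref{eq:atomic-clock-lip}.

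For the selection, I would use the chain \eqref{eq:atomic-selection-bound}. Theorem~\ref{thm:main-preservation} gives $d(f(s),f(t))\le\VH(F;[s,t])$ for $s<t$. Theorem~\ref{thm:atomic-variation} bounds $\VH(F;[s,t])$ by $L\sum_{\tau\in[s,t)}[\Jump g(\tau)]^\alpha$, which equals $L\bigl(A_{\alpha,g}(t)-A_{\alpha,g}(s)\bigr)$ by the same identity.

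Both inequalities are symmetric in $s$ and $t$ and trivial when $s=t$. They therefore hold for all pairs, which is exactly the statement that $F$ and $f$ are $A_{\alpha,g}$-Lipschitz with constant at most $L$.

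The only point needing care is that $\lambda_{\alpha,g}$ is a genuine finite measure, so that $A_{\alpha,g}$ is well defined and the increment identity holds. This follows from the bound $\sum_{\tau\in[a,b)}[\Jump g(\tau)]^\alpha\le (g(b)-g(a))^\alpha<\infty$ established in the proof of Theorem~\ref{thm:atomic-power}. I expect no real obstacle; the argument is a direct assembly of earlier results.
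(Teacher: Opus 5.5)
Your proposal is correct and follows the paper's own proof: the estimate for $F$ comes from Proposition~\ref{prop:atomic-endpoint} together with the identity $A_{\alpha,g}(t)-A_{\alpha,g}(s)=\sum_{\tau\in[s,t)}[\Jump g(\tau)]^\alpha$. The estimate for $f$ comes from combining Theorem~\ref{thm:main-preservation} with Theorem~\ref{thm:atomic-variation}, i.e.\ the chain \eqref{eq:atomic-selection-bound}.
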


\begin{proof}
The first estimate follows from Proposition~\ref{prop:atomic-endpoint} and the definition of $A_{\alpha,g}$; the second follows from \eqref{eq:atomic-selection-bound}.
\end{proof}

\begin{corollary}[Constancy on intervals where the clock is continuous]\label{cor:continuous-rigidity}
Let $(X,d)$ be a metric space, let $g:I\to\mathbb R$ be left-continuous and nondecreasing, let $\alpha>1$, and let $F:I\to\K(X)$ be $g$-H\"older of exponent $\alpha$. For every $a\le s<t\le b$, if $g$ is continuous on $[s,t]$, then $F$ is constant on $[s,t]$. In particular, if $g(t)=t$ on $I$, every compact-valued $\alpha$-H\"older map with $\alpha>1$ is constant.
\end{corollary}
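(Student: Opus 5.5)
The plan is to reduce the corollary to Proposition~\ref{prop:atomic-endpoint}, applied on subintervals of $[s,t]$, after observing that continuity of $g$ on $[s,t]$ removes every jump of $g$ that could contribute.

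Fix $s\le s'<t'\le t$. Proposition~\ref{prop:atomic-endpoint} gives
\[
 d_H(F(s'),F(t'))\le L\sum_{\tau\in[s',t')}[\Jump g(\tau)]^\alpha ,
\]
where $L=[F]_{\alpha,g}$. Each $\tau\in[s',t')$ lies in $[s,t)$. Since $\tau<t$ and $g$ is continuous at $\tau$ relative to $[s,t]$, we get $g(\tau+)=g(\tau)$, so $\Jump g(\tau)=0$. This step needs some care at the left endpoint. At $\tau=s$, continuity on $[s,t]$ means right-continuity at $s$, which is exactly what controls $\Jump g(s)=g(s+)-g(s)$. Jumps at points outside $[s,t)$ never enter the sum. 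The right-hand side is therefore zero, so $F(s')=F(t')$. Taking $s'=s$ and letting $t'$ range over $(s,t]$ shows that $F(t')=F(s)$ for all $t'\in[s,t]$, that is, $F$ is constant on $[s,t]$.

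A self-contained alternative is to bypass Theorem~\ref{thm:atomic-power} and argue directly. Since $g$ is uniformly continuous on $[s,t]$, the cells of a fine partition $P$ of $[s,t]$ all have $g$-increment at most $\eta$. Then
\[
 d_H(F(s),F(t))\le L\,S_{\alpha,g}(P;[s,t])\le L\,\eta^{\alpha-1}[g(t)-g(s)],
\]
and the right-hand side tends to $0$ as $\eta\to 0$ because $\alpha>1$. This is the same mechanism as in Remark~\ref{rem:atomic-interpretation}, and it confirms the result independently of the jump bookkeeping.

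For the special case $g(t)=t$, this $g$ is continuous and nondecreasing on all of $I$, so applying the first part with $[s,t]=I$ shows that $F$ is constant. Here the $g$-H\"older condition is just classical $\alpha$-H\"older continuity in the Hausdorff metric.

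The only real obstacle is the endpoint convention: making sure that $\Jump g$ is evaluated only at points of $[s,t)$, and that continuity on the closed interval $[s,t]$ kills the right jump at $s$. A jump of $g$ at $t$ does not matter, because it is excluded from the half-open range $[s,t)$.
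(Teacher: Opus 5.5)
Your proof is correct and is essentially the paper's argument. The paper notes that $\Jump g(\tau)=0$ for all $\tau\in[s,t)$ and invokes Theorem~\ref{thm:atomic-variation} to get $\VH(F;[s,t])=0$, which is just the summed form of the endpoint estimate from Proposition~\ref{prop:atomic-endpoint} that you apply on each subinterval $[s',t']$; your uniform-continuity alternative is also valid and more elementary, since it bypasses Theorem~\ref{thm:atomic-power} and Lemma~\ref{lem:small-mass} and uses only the triangle inequality together with $\alpha>1$.
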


\begin{proof}
If $g$ is continuous on $[s,t]$, then $\Jump g(\tau)=0$ for every $\tau\in[s,t)$. Hence Theorem~\ref{thm:atomic-variation} gives $\VH(F;[s,t])=0$, and therefore $F$ is constant on $[s,t]$.
\end{proof}

Table~\ref{tab:regimes} summarizes how the continuous and jump parts of a left-continuous nondecreasing Stieltjes clock contribute to variation at and above the Lipschitz threshold.

\begin{table}[ht]
\centering
\caption{Contribution of the continuous and jump parts of the clock to variation.}
\label{tab:regimes}
\begin{tabularx}{0.92\textwidth}{>{\raggedright\arraybackslash}p{0.18\textwidth} X X}
\toprule
Regime & Continuous part of the clock & Jump part of the clock \\
\midrule
$\alpha=1$ & May contribute to variation & May contribute to variation \\
$\alpha>1$ & Contributes no variation & Carries all variation \\
\bottomrule
\end{tabularx}
\end{table}

\begin{example}[Mixed clock]\label{ex:mixed-clock}
Let $\tau\in(0,1)$ and let $m>0$, $L>0$, and $R>0$. Define
\[
 g(t)=t+m\mathbf 1_{\{t>\tau\}},
\]
where $\mathbf 1_{\{t>\tau\}}$ denotes the indicator of the set $\{t>\tau\}$.
Thus $g$ has a nontrivial continuous component and a single right jump of size $m$ at $\tau$. Let $K=\{0,R\}\subset\mathbb R$ and define
\[
 F(t)=
 \begin{cases}
 K,&t\le\tau,\\
 K+Lm^\alpha,&t>\tau.
 \end{cases}
\]
For $s\le\tau<t$,
\[
 d_H(F(s),F(t))=Lm^\alpha
 \le L[g(t)-g(s)]^\alpha,
\]
while $F$ is constant on either side of $\tau$. Thus $F$ is $g$-H\"older of exponent $\alpha>1$ with constant $L$. Moreover,
\[
 \VH(F;[0,1])=Lm^\alpha.
\]
Thus $g$ need not be pure-jump: although the clock has a continuous component, only its jump contributes to the variation of $F$.
\end{example}

\section{Exact jump structure of the multifunction and its selection}\label{sec:exact-jumps}

The previous section shows that, for $\alpha>1$, the variation of a $g$-H\"older map is controlled by the jumps of the Stieltjes clock. We now identify the exact jump decomposition of the set-valued map and of the prescribed-point selection, and compare their jumps pointwise.

Let $g:I\to\mathbb R$ be left-continuous and nondecreasing, let $\alpha>1$, and let $F:I\to\K(X)$ be $g$-H\"older of exponent $\alpha$, with $L=[F]_{\alpha,g}<\infty$. Assume that $(X,d)$ is complete. Fix $\theta\in I$ and $x_\theta\in F(\theta)$, and let $f:I\to X$ be the prescribed-point selection supplied by Theorem~\ref{thm:main-preservation}, with $f(\theta)=x_\theta$.

Completeness is used only in this section to obtain right limits in $X$ and in the metric space $(\K(X),d_H)$.

Since $g$ is left-continuous, $t_n\uparrow t$ implies
\[
 d_H(F(t_n),F(t))\le L|g(t)-g(t_n)|^\alpha\to0.
\]
The selection $f$ is left-continuous by the same estimate.

\begin{proposition}[Existence of right limits]\label{prop:right-limits}
For every $a\le\tau<b$, the limits
\[
 F(\tau+)=\lim_{r\downarrow\tau}F(r)
 \quad\text{in }(\K(X),d_H),
\]
and
\[
 f(\tau+)=\lim_{r\downarrow\tau}f(r)
 \quad\text{in }X
\]
exist.
\end{proposition}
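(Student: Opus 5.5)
We show that both one-sided limits exist by a Cauchy argument in the complete spaces $(\K(X),d_H)$ and $X$; completeness of $X$ makes $(\K(X),d_H)$ complete. The key point is that, just to the right of $\tau$, both $F$ and $f$ are controlled by the cumulative atomic function $A_{\alpha,g}$ of Proposition~\ref{prop:atomic-clock}, and this function has a right limit at $\tau$.

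\emph{Step 1: $A_{\alpha,g}$ has a right limit at $\tau$.} The function $A_{\alpha,g}$ is nondecreasing and bounded: by Theorem~\ref{thm:atomic-power}, its total mass satisfies $\lambda_{\alpha,g}([a,b))\le [g(b)-g(a)]^\alpha<\infty$. Hence $A_{\alpha,g}(\tau+)$ exists. More concretely, for $\tau<r<r'$,
\[
 A_{\alpha,g}(r')-A_{\alpha,g}(r)=\lambda_{\alpha,g}([r,r')).
\]
By continuity from above of the finite measure $\lambda_{\alpha,g}$, this tends to $\lambda_{\alpha,g}(\emptyset)=0$ as $r'\downarrow\tau$, since $\bigcap_{r'>\tau}(\tau,r')=\emptyset$.

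\emph{Step 2: the Cauchy estimate.} Proposition~\ref{prop:atomic-clock} gives, for $\tau<r<r'$,
\[
 d_H(F(r),F(r'))\le L\bigl(A_{\alpha,g}(r')-A_{\alpha,g}(r)\bigr),
\qquad
 d(f(r),f(r'))\le L\bigl(A_{\alpha,g}(r')-A_{\alpha,g}(r)\bigr).
\]
Alternatively, the same bounds follow from \eqref{eq:atomic-selection-bound} together with Theorem~\ref{thm:atomic-variation}.

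\emph{Step 3: conclusion.} Take any sequence $r_n\downarrow\tau$ with $r_n>\tau$. By Steps 1 and 2, $(F(r_n))$ is Cauchy in $(\K(X),d_H)$ and $(f(r_n))$ is Cauchy in $X$, so both converge. For any two such sequences, interleaving them yields another decreasing sequence, which shows the limits coincide. Thus the right limits $F(\tau+)$ and $f(\tau+)$ exist.

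The main obstacle is Step 1: making sure that only atoms in the open interval $(\tau,r')$ contribute. The jump at $\tau$ itself is excluded because the half-open cells $[r,r')$ with $r>\tau$ never contain $\tau$. An alternative that avoids the atomic machinery is to use the bounded nondecreasing variation function $\Phi_F$. Theorem~\ref{thm:main-preservation} gives $d(f(r),f(r'))\le\VH(F;[r,r'])=\Phi_F(r')-\Phi_F(r)$, and the same bound holds for $d_H(F(r),F(r'))$. Since $\Phi_F$ is monotone and bounded, $\Phi_F(\tau+)$ exists, and the Cauchy argument above then goes through unchanged.
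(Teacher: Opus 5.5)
Your argument is correct and is essentially the paper's proof. Proposition~\ref{prop:atomic-clock} simply repackages Proposition~\ref{prop:atomic-endpoint} and \eqref{eq:atomic-selection-bound}, so your bound $L\,\lambda_{\alpha,g}([r,r'))\le L\,\lambda_{\alpha,g}((\tau,r'))\to0$ is exactly the paper's tail-of-the-atomic-series estimate, followed by completeness of $X$ and of $(\K(X),d_H)$. (Minor point: interleaving two decreasing sequences need not give a decreasing one, but this is harmless, since your estimate holds for any pair $r,r'\in(\tau,\tau+\delta)$ and so gives the Cauchy criterion for the one-sided limit directly.)
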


\begin{proof}
Fix $a\le\tau<b$. If $\tau<r_1<r_2\le b$, then Proposition~\ref{prop:atomic-endpoint} gives
\[
 d_H(F(r_1),F(r_2))
 \le L\sum_{\sigma\in[r_1,r_2)}[\Jump g(\sigma)]^\alpha.
\]
As $r_1,r_2\downarrow\tau$, the right-hand side is bounded by a tail of the convergent atomic series and tends to zero. Hence $F(r)$ is Hausdorff-Cauchy as $r\downarrow\tau$. Since $(X,d)$ is complete, the Hausdorff hyperspace $\K(X)$ is complete, and therefore $F(\tau+)$ exists.

For the selection $f$, Theorem~\ref{thm:main-preservation} and Theorem~\ref{thm:atomic-variation} give
\[
 d(f(r_1),f(r_2))
 \le\VH(F;[r_1,r_2])
 \le L\sum_{\sigma\in[r_1,r_2)}[\Jump g(\sigma)]^\alpha,
\]
so the same argument yields the existence of $f(\tau+)$.
\end{proof}

Define $J_F,J_f:[a,b)\to[0,\infty)$ by
\begin{equation}\label{eq:jumps-F-f}
 J_F(\tau)=d_H(F(\tau+),F(\tau)),
 \qquad
 J_f(\tau)=d(f(\tau+),f(\tau)).
\end{equation}

Define the variation functions $\Phi_F,\Phi_f:I\to[0,\infty)$ by
\[
 \Phi_F(t)=\VH(F;[a,t]),
 \qquad
 \Phi_f(t)=\Var(f;[a,t]).
\]

The variation functions $\Phi_F$ and $\Phi_f$ are left-continuous. Indeed, for $r<t$,
\[
 0\le\Phi_F(t)-\Phi_F(r)
 =\VH(F;[r,t])
 \le L\sum_{\tau\in[r,t)}[\Jump g(\tau)]^\alpha,
\]
and the tail on the right tends to zero as $r\uparrow t$. Since
\[
 0\le\Phi_f(t)-\Phi_f(r)=\Var(f;[r,t])\le\VH(F;[r,t]),
\]
the same conclusion holds for $\Phi_f$.

\begin{lemma}[Variation-jump identity]\label{lem:variation-jump}
For every $a\le\tau<b$,
\begin{equation}\label{eq:variation-jump-F}
 \Phi_F(\tau+)-\Phi_F(\tau)
 =J_F(\tau).
\end{equation}
Equivalently,
\[
 \lim_{r\downarrow\tau}\VH(F;[\tau,r])=J_F(\tau).
\]
\end{lemma}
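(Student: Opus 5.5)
The plan is to squeeze $\VH(F;[\tau,r])$ between $J_F(\tau)$ and $J_F(\tau)$ plus two error terms that vanish as $r\downarrow\tau$. First we reduce the statement to the displayed limit form. By Theorem~\ref{thm:atomic-variation}, $\VH(F;[a,b])<\infty$, so the additivity \eqref{eq:variation-additivity} gives $\Phi_F(r)-\Phi_F(\tau)=\VH(F;[\tau,r])$ for $\tau<r\le b$. Since $\Phi_F$ is nondecreasing, the right limit $\Phi_F(\tau+)$ exists. Hence the two formulations are equivalent, and it suffices to show $\lim_{r\downarrow\tau}\VH(F;[\tau,r])=J_F(\tau)$.

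\emph{Lower bound.} By \eqref{eq:endpoint-by-variation}, $\VH(F;[\tau,r])\ge d_H(F(\tau),F(r))$. By Proposition~\ref{prop:right-limits}, the right-hand side tends to $d_H(F(\tau),F(\tau+))=J_F(\tau)$ as $r\downarrow\tau$. So the $\liminf$ is at least $J_F(\tau)$.

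\emph{Upper bound.} Fix $r\in(\tau,b]$ and an arbitrary partition $\tau=t_0<t_1<\cdots<t_n=r$. We separate the first cell from the others:
\[
\sum_{i=1}^n d_H(F(t_{i-1}),F(t_i))\le d_H(F(\tau),F(t_1))+\VH(F;[t_1,r]).
\]
Theorem~\ref{thm:atomic-variation} bounds the second term by $L\sum_{\sigma\in[t_1,r)}[\Jump g(\sigma)]^\alpha$. Since $t_1>\tau$, this is at most
\[
\varepsilon_1(r):=L\sum_{\sigma\in(\tau,r)}[\Jump g(\sigma)]^\alpha .
\]
For the first term, the triangle inequality gives
\[
d_H(F(\tau),F(t_1))\le J_F(\tau)+\varepsilon_2(r),\qquad \varepsilon_2(r):=\sup_{u\in(\tau,r]}d_H(F(\tau+),F(u)).
\]
Taking the supremum over partitions yields $\VH(F;[\tau,r])\le J_F(\tau)+\varepsilon_1(r)+\varepsilon_2(r)$.

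It remains to show that both error terms tend to $0$, and this is where the work lies. For $\varepsilon_2$, this is just the definition of the right limit $F(\tau+)$ from Proposition~\ref{prop:right-limits}. For $\varepsilon_1$, the atomic measure $\lambda_{\alpha,g}$ is finite by the bound in the proof of Theorem~\ref{thm:atomic-power}, and the sets $(\tau,r)$ decrease to $\varnothing$ as $r\downarrow\tau$. Continuity from above of $\lambda_{\alpha,g}$ therefore gives $\varepsilon_1(r)\to0$.

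The key point is that the atom at $\tau$ itself is excluded from $\varepsilon_1$: the cell $[t_1,r)$ does not contain $\tau$, and the whole contribution of $\tau$ is captured by the single first-cell distance $d_H(F(\tau),F(t_1))$. Combining the two bounds gives $\limsup_{r\downarrow\tau}\VH(F;[\tau,r])\le J_F(\tau)$, which together with the lower bound proves \eqref{eq:variation-jump-F}.
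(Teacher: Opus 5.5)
Your proof is correct and follows the paper's argument. The lower bound is the same, via $d_H(F(\tau),F(r))\le \VH(F;[\tau,r])$, and so is the upper bound: you isolate the first partition cell and control the remaining cells by the atomic series over $(\tau,r)$. The only difference is minor: you bound $d_H(F(\tau+),F(t_1))$ directly by the right-limit modulus $\varepsilon_2(r)$, whereas the paper passes to the right limit in Proposition~\ref{prop:atomic-endpoint} to get $L\sum_{\sigma\in(\tau,t_1)}[\Jump g(\sigma)]^\alpha$.
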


\begin{proof}
The lower bound follows from
\[
 \VH(F;[\tau,r])\ge d_H(F(\tau),F(r))
\]
and $r\downarrow\tau$.

For the upper bound, take a partition $\tau=t_0<t_1<\cdots<t_n=r$. The first increment satisfies
\[
 d_H(F(\tau),F(t_1))
 \le J_F(\tau)+d_H(F(\tau+),F(t_1)).
\]
By passing to the right limit in Proposition~\ref{prop:atomic-endpoint},
\[
 d_H(F(\tau+),F(t_1))
 \le L\sum_{\sigma\in(\tau,t_1)}[\Jump g(\sigma)]^\alpha.
\]
Apply Proposition~\ref{prop:atomic-endpoint} to all later cells. Summing gives
\[
 \sum_{i=1}^n d_H(F(t_{i-1}),F(t_i))
 \le J_F(\tau)
 +L\sum_{\sigma\in(\tau,r)}[\Jump g(\sigma)]^\alpha.
\]
Take the supremum over partitions and then let $r\downarrow\tau$.
\end{proof}

\begin{remark}[Classical BV jump formulas]\label{rem:chistyakov-jumps}
The relation between one-sided jumps of a metric-valued BV map and the corresponding jumps of its variation function is well established in the literature; see, in particular, Chistyakov's jump formulas \cite{Chistyakov1998}. The additional $g$-H\"older assumption with exponent $\alpha>1$ removes the nonatomic contribution to variation and turns those local jump relations into the global identities below.
\end{remark}

The endpoint estimate also gives, for every $a\le\tau<b$, the pointwise jump bound
\begin{equation}\label{eq:set-jump-bound}
 J_F(\tau)\le L[\Jump g(\tau)]^\alpha.
\end{equation}

For the left-continuous increasing variation functions, define finite Borel measures by setting, for every $a\le s\le t\le b$,
\begin{equation}\label{eq:variation-measures}
 \nu_F([s,t))=\Phi_F(t)-\Phi_F(s)=\VH(F;[s,t]),
\end{equation}
\[
 \nu_f([s,t))=\Phi_f(t)-\Phi_f(s)=\Var(f;[s,t]).
\]
For convenience, recall the atomic measure introduced in Section~\ref{sec:atomic-variation},
\[
 \lambda_{\alpha,g}
 =\sum_{\tau\in[a,b)}[\Jump g(\tau)]^\alpha\delta_\tau,
\]
so that
\[
 \lambda_{\alpha,g}([s,t))
 =\sum_{\tau\in[s,t)}[\Jump g(\tau)]^\alpha.
\]
Theorem~\ref{thm:atomic-variation} gives $\nu_F([s,t))\le L\lambda_{\alpha,g}([s,t))$ on every half-open interval. By finite additivity on finite disjoint unions of such intervals and the standard monotone-class argument, this domination extends to all Borel sets; hence
\begin{equation}\label{eq:measure-dom-F}
 \nu_F\le L\lambda_{\alpha,g}.
\end{equation}
Thus $\nu_F$ is supported on the atoms of $\lambda_{\alpha,g}$, equivalently on the jump points of $g$. Lemma~\ref{lem:variation-jump} identifies the mass of $\nu_F$ at each such point with $J_F(\tau)$.

\begin{theorem}[Exact decomposition of variation into jumps]\label{thm:exact-jump}
Let $(X,d)$ be a complete metric space, let $g:I\to\mathbb R$ be left-continuous and nondecreasing, let $\alpha>1$, and let $F:I\to\K(X)$ be $g$-H\"older of exponent $\alpha$ with $L=[F]_{\alpha,g}<\infty$. Then, for every $a\le s<t\le b$,
\begin{equation}\label{eq:exact-F}
 \VH(F;[s,t])
 =\sum_{\tau\in[s,t)}J_F(\tau)
 =\sum_{\tau\in[s,t)}d_H(F(\tau+),F(\tau)).
\end{equation}
Moreover,
\begin{equation}\label{eq:exact-F-bound}
 \sum_{\tau\in[s,t)}J_F(\tau)
 \le L\sum_{\tau\in[s,t)}[\Jump g(\tau)]^\alpha.
\end{equation}
\end{theorem}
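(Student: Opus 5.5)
The plan is to use the variation measure $\nu_F$ from \eqref{eq:variation-measures} and the domination \eqref{eq:measure-dom-F}, $\nu_F\le L\lambda_{\alpha,g}$. Since $\lambda_{\alpha,g}$ is purely atomic, concentrated on the at most countable set $D$ of jump points of $g$, the domination gives $\nu_F(B)=0$ for every Borel $B\subset I\setminus D$. In particular $\nu_F$ is purely atomic as well. Therefore, for $a\le s<t\le b$,
\[
 \VH(F;[s,t])=\nu_F([s,t))=\sum_{\tau\in[s,t)\cap D}\nu_F(\{\tau\}),
\]
by countable additivity over the countable set $[s,t)\cap D$ together with $\nu_F([s,t)\setminus D)=0$.

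The next step identifies the atoms. For $\tau<b$, continuity from above of the finite measure $\nu_F$ gives
\[
 \nu_F(\{\tau\})=\lim_{r\downarrow\tau}\nu_F([\tau,r))=\lim_{r\downarrow\tau}\VH(F;[\tau,r]),
\]
and by Lemma~\ref{lem:variation-jump} this equals $J_F(\tau)$. For $\tau\notin D$, the pointwise bound \eqref{eq:set-jump-bound} gives $J_F(\tau)\le L[\Jump g(\tau)]^\alpha=0$. Hence the sum over $[s,t)\cap D$ may be rewritten as the sum over all $\tau\in[s,t)$, which has only countably many nonzero terms. This yields \eqref{eq:exact-F}.

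The bound \eqref{eq:exact-F-bound} then follows immediately. One route is to combine \eqref{eq:exact-F} with Theorem~\ref{thm:atomic-variation}. The other is to sum \eqref{eq:set-jump-bound} termwise over $\tau\in[s,t)$.

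The main obstacle is justifying that $\nu_F$ is a genuine countably additive Borel measure satisfying $\nu_F([s,t))=\Phi_F(t)-\Phi_F(s)$. This rests on left-continuity of $\Phi_F$, which the paper has already established, and on the Lebesgue--Stieltjes construction for left-continuous nondecreasing functions. Once this is in place, the extension of the interval domination to all Borel sets, already asserted via the monotone-class argument, completes the proof.

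If one prefers to avoid measure theory, the fallback is a direct argument. For the lower bound, take finitely many jump points $\tau_1<\dots<\tau_N$ in $[s,t)$, insert the points $\tau_j$ and $r_j$ with $r_j\downarrow\tau_j$ into a partition, and use additivity \eqref{eq:variation-additivity} together with Lemma~\ref{lem:variation-jump} to get $\VH(F;[s,t])\ge\sum_j J_F(\tau_j)$. For the upper bound, mimic the proof of Theorem~\ref{thm:atomic-power}. Isolate the finitely many atoms with mass above $\delta$; on the remaining cells, apply Theorem~\ref{thm:atomic-variation} to bound their total contribution by $L$ times the tail $\sum_{\Jump g>\delta}$-complement of the atomic series, which tends to zero as $\delta\to0$.
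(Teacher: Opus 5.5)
Your proposal is correct and follows essentially the same route as the paper: the domination $\nu_F\le L\lambda_{\alpha,g}$ forces $\nu_F$ to be purely atomic on the jump points of $g$, Lemma~\ref{lem:variation-jump} identifies each atom with $J_F(\tau)$, and \eqref{eq:exact-F-bound} follows from \eqref{eq:set-jump-bound}. You also spell out two steps the paper leaves implicit: continuity from above, which passes from $\nu_F([\tau,r))$ to $\nu_F(\{\tau\})$, and the observation that $J_F(\tau)=0$ off the jump set, which lets the sum range over all of $[s,t)$.
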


\begin{proof}
Equation \eqref{eq:measure-dom-F} shows that $\nu_F$ is supported on the atoms of $\lambda_{\alpha,g}$. Since the mass of $\nu_F$ at $\tau$ is $J_F(\tau)$, summing the atoms over $[s,t)$ gives \eqref{eq:exact-F}. Equation \eqref{eq:exact-F-bound} follows from \eqref{eq:set-jump-bound} or directly from \eqref{eq:measure-dom-F}.
\end{proof}

Since \eqref{eq:variation-preservation} gives $\nu_f([s,t))\le\nu_F([s,t))$ on every half-open interval, the same monotone-class argument yields
\begin{equation}\label{eq:measure-dom-f}
 \nu_f\le\nu_F.
\end{equation}
Thus $\nu_f$ is also purely atomic; that is, its mass is concentrated on the jump points. The single-valued counterpart of Lemma~\ref{lem:variation-jump} gives $\nu_f(\{\tau\})=J_f(\tau)$.

\begin{corollary}[Exact jump decomposition for the prescribed-point selection]\label{cor:exact-selection}
Fix $\theta\in I$ and $x_\theta\in F(\theta)$, and let $f:I\to X$ be the prescribed-point selection supplied by Theorem~\ref{thm:main-preservation}, with $f(\theta)=x_\theta$. Then, for every $a\le s<t\le b$,
\begin{equation}\label{eq:exact-f}
 \Var(f;[s,t])
 =\sum_{\tau\in[s,t)}J_f(\tau)
 =\sum_{\tau\in[s,t)}d(f(\tau+),f(\tau)).
\end{equation}
\end{corollary}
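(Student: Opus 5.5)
The plan is to copy the proof of Theorem~\ref{thm:exact-jump} for the selection, using the measure domination \eqref{eq:measure-dom-f} already recorded before the statement. First, from \eqref{eq:measure-dom-f} and \eqref{eq:measure-dom-F} we get $\nu_f\le\nu_F\le L\lambda_{\alpha,g}$. The measure $\lambda_{\alpha,g}$ is purely atomic, carried by the at most countable set $D=\{\tau\in[a,b):\Jump g(\tau)>0\}$. Hence $\nu_f([a,b)\setminus D)=0$. Therefore, for every $a\le s<t\le b$,
\[
 \Var(f;[s,t])=\nu_f([s,t))=\sum_{\tau\in[s,t)\cap D}\nu_f(\{\tau\}),
\]
by countable additivity. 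Points outside $D$ carry no $\nu_f$-mass, so the sum may be written over all of $[s,t)$.

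The second step identifies the atoms: $\nu_f(\{\tau\})=J_f(\tau)$, the single-valued analogue of Lemma~\ref{lem:variation-jump}. Since $\nu_f$ is built from the left-continuous function $\Phi_f$ with the convention $\nu_f([s,t))=\Phi_f(t)-\Phi_f(s)$, continuity from above gives
\[
 \nu_f(\{\tau\})=\lim_{r\downarrow\tau}\Var(f;[\tau,r]).
\]
The lower bound $\Var(f;[\tau,r])\ge d(f(\tau),f(r))\to J_f(\tau)$ uses the existence of $f(\tau+)$ from Proposition~\ref{prop:right-limits}.

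For the upper bound, take a partition $\tau=t_0<\cdots<t_n=r$ and bound the first increment by
\[
 d(f(\tau),f(t_1))\le J_f(\tau)+d(f(\tau+),f(t_1)).
\]
Control $d(f(\tau+),f(t_1))$ by passing to the limit $r_1\downarrow\tau$ in
\[
 d(f(r_1),f(t_1))\le\VH(F;[r_1,t_1])\le L\lambda_{\alpha,g}([r_1,t_1)),
\]
which yields $L\lambda_{\alpha,g}((\tau,t_1))$. The later cells are bounded by $L\lambda_{\alpha,g}([t_{i-1},t_i))$ via \eqref{eq:atomic-selection-bound}. Summing gives
\[
 \Var(f;[\tau,r])\le J_f(\tau)+L\lambda_{\alpha,g}((\tau,r)),
\]
and the last term tends to $0$ as $r\downarrow\tau$ by continuity from above of the finite measure $\lambda_{\alpha,g}$.

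The main point needing care is the first step, namely that the interval domination $\nu_f([s,t))\le\nu_F([s,t))$ really extends to Borel sets. Both are finite measures, and the class of Borel sets on which $L\lambda_{\alpha,g}-\nu_f\ge0$ can be handled by writing the difference as a finite signed measure. That signed measure is nonnegative on the semiring of half-open intervals, hence on the generated ring. By outer approximation from that ring, a form of Carathéodory uniqueness, it is nonnegative on all Borel sets. Everything else is a direct transcription of the multivalued arguments with $d$ in place of $d_H$.
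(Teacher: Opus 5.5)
Your proposal is correct and follows the paper's route. The domination $\nu_f\le\nu_F\le L\lambda_{\alpha,g}$ makes $\nu_f$ purely atomic on the jump set of $g$, and the single-valued analogue of Lemma~\ref{lem:variation-jump} identifies each atom as $J_f(\tau)$. You write out two steps the paper only asserts: the proof of $\nu_f(\{\tau\})=J_f(\tau)$, and the extension of the interval inequality to Borel sets. For the latter your approximation argument does the same job as the paper's monotone-class argument.
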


\begin{proof}
Since $\nu_f$ is purely atomic and $\nu_f(\{\tau\})=J_f(\tau)$, summing its atoms over $[s,t)$ and using $\nu_f([s,t))=\Var(f;[s,t])$ gives the result.
\end{proof}

\begin{corollary}[Pointwise preservation of jumps]\label{cor:jump-preservation}
For every $a\le\tau<b$,
\begin{equation}\label{eq:jump-by-jump}
 J_f(\tau)
 \le J_F(\tau)
 \le [F]_{\alpha,g}[\Jump g(\tau)]^\alpha.
\end{equation}
Consequently, if
\[
 \mathcal J(g)=\{\tau<b:\Jump g(\tau)>0\},
\]
\[
 \mathcal J(F)=\{\tau<b:F(\tau+)\ne F(\tau)\},
 \qquad
 \mathcal J(f)=\{\tau<b:f(\tau+)\ne f(\tau)\},
\]
then
\begin{equation}\label{eq:jump-hierarchy}
 \mathcal J(f)\subseteq\mathcal J(F)\subseteq\mathcal J(g).
\end{equation}
\end{corollary}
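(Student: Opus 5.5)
The two inequalities in \eqref{eq:jump-by-jump} will be proved separately, and the inclusions \eqref{eq:jump-hierarchy} then read off from them.

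\emph{Second inequality.} This is exactly \eqref{eq:set-jump-bound}, with $L=[F]_{\alpha,g}$. To justify it directly, apply Proposition~\ref{prop:atomic-endpoint} on $[\tau,r]$ for $r>\tau$:
\[
 d_H(F(\tau),F(r))\le L\sum_{\sigma\in[\tau,r)}[\Jump g(\sigma)]^\alpha .
\]
Then let $r\downarrow\tau$. By Proposition~\ref{prop:right-limits} the left side tends to $J_F(\tau)$. On the right, the convergent atomic series has tail $\sum_{\sigma\in(\tau,r)}[\Jump g(\sigma)]^\alpha\to0$, so the right side tends to $L[\Jump g(\tau)]^\alpha$.

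\emph{First inequality.} I would compare the atoms of the variation measures. The single-valued version of Lemma~\ref{lem:variation-jump} gives $J_f(\tau)=\lim_{r\downarrow\tau}\Var(f;[\tau,r])$. Its proof is identical, using the estimate $d(f(r_1),f(r_2))\le L\sum_{\sigma\in[r_1,r_2)}[\Jump g(\sigma)]^\alpha$ from the proof of Proposition~\ref{prop:right-limits}. Lemma~\ref{lem:variation-jump} itself gives $J_F(\tau)=\lim_{r\downarrow\tau}\VH(F;[\tau,r])$. By \eqref{eq:variation-preservation}, $\Var(f;[\tau,r])\le\VH(F;[\tau,r])$ for every $r>\tau$, and letting $r\downarrow\tau$ gives $J_f(\tau)\le J_F(\tau)$.

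A more elementary route avoids the lemma altogether. By Theorem~\ref{thm:main-preservation}, $d(f(\tau),f(r))\le\VH(F;[\tau,r])$, and letting $r\downarrow\tau$ again yields $J_f(\tau)\le J_F(\tau)$. Alternatively, the statement also follows from \eqref{eq:measure-dom-f}, since $\nu_f(\{\tau\})\le\nu_F(\{\tau\})$.

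\emph{Inclusions.} The identification of right limits needs no care beyond Proposition~\ref{prop:right-limits}, so the rest is immediate. If $\tau\in\mathcal J(f)$, then $J_f(\tau)>0$, hence $J_F(\tau)>0$ and $F(\tau+)\ne F(\tau)$. If $\tau\in\mathcal J(F)$, then $0<J_F(\tau)\le L[\Jump g(\tau)]^\alpha$. This forces $L>0$ and $\Jump g(\tau)>0$, so $\tau\in\mathcal J(g)$.
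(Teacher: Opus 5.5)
Your proof is correct and is essentially the paper's argument. The paper applies $\nu_f\le\nu_F$ to the singleton $\{\tau\}$, using $\nu_f(\{\tau\})=J_f(\tau)$ and $\nu_F(\{\tau\})=J_F(\tau)$, and then cites \eqref{eq:set-jump-bound}; this is your first route expressed through measures, and your passage to the limit $r\downarrow\tau$ in Proposition~\ref{prop:atomic-endpoint} is exactly how \eqref{eq:set-jump-bound} is obtained. One small correction: your ``more elementary'' route does not avoid Lemma~\ref{lem:variation-jump} altogether, because identifying $\lim_{r\downarrow\tau}\VH(F;[\tau,r])$ with $J_F(\tau)$ requires the lemma's upper-bound half; it avoids only the lemma's single-valued counterpart.
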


\begin{proof}
Apply \eqref{eq:measure-dom-f} to the singleton $\{\tau\}$ and then use \eqref{eq:set-jump-bound}.
\end{proof}

\begin{theorem}[Exact atomic formula for Riesz $p$-variation]\label{thm:exact-riesz-atomic}
Let $(X,d)$ be a complete metric space, let $g:I\to\mathbb R$ be left-continuous and nondecreasing, let $\alpha>1$, let $F:I\to\K(X)$ be $g$-H\"older of exponent $\alpha$, and let $p>1$. Then, for every $a\le s<t\le b$,
\[
 \mathcal R^H_{p,g}(F;[s,t])
 =
 \sum_{\substack{\tau\in[s,t)\\ \Jump g(\tau)>0}}
 \frac{J_F(\tau)^p}{[\Jump g(\tau)]^{p-1}}.
\]
The identity remains valid for countably many jump points, including families with accumulation points.
\end{theorem}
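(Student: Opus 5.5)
The plan is to prove the two inequalities separately. For each we first reduce to what happens near individual jump points of $g$.

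\emph{Lower bound.} We show that the Riesz sum over suitable partitions approaches the atomic series. Fix finitely many jump points $\tau_1<\cdots<\tau_N$ in $[s,t)$ with $\Jump g(\tau_j)>0$. For small $\eta>0$, take a partition of $[s,t]$ that contains the cells $[\tau_j,\tau_j+\eta]$ together with filler cells. Every term is nonnegative, and a filler cell with zero $g$-increment has $F$ constant on it, since $g$-H\"older regularity forces equal $g$-values to give equal $F$-values. Hence
\[
 \mathcal R^H_{p,g}(F;[s,t])\ge\sum_{j=1}^N\frac{d_H(F(\tau_j),F(\tau_j+\eta))^p}{[g(\tau_j+\eta)-g(\tau_j)]^{p-1}}.
\]
As $\eta\downarrow0$, $g(\tau_j+\eta)-g(\tau_j)\to\Jump g(\tau_j)>0$. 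By Proposition~\ref{prop:right-limits}, $d_H(F(\tau_j),F(\tau_j+\eta))\to J_F(\tau_j)$. Then take the supremum over finite families of jump points. Jump points of $g$ in $[s,t)$ beyond these are handled automatically, since they are countable and the terms are nonnegative.

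\emph{Upper bound.} Let $P:s=t_0<\cdots<t_n=t$ be a partition. A cell with $a_i=g(t_i)-g(t_{i-1})=0$ contributes $0$, by the same constancy observation. On a cell with $a_i>0$, Theorem~\ref{thm:exact-jump} gives
\[
 d_H(F(t_{i-1}),F(t_i))\le\VH(F;[t_{i-1},t_i])=\sum_{\tau\in[t_{i-1},t_i)}J_F(\tau).
\]
Moreover $a_i=\mu_g([t_{i-1},t_i))\ge\sum_{\tau\in[t_{i-1},t_i)}\Jump g(\tau)$. Terms with $J_F(\tau)>0$ have $\Jump g(\tau)>0$ by \eqref{eq:jump-hierarchy}. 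The key step is the discrete convexity (H\"older/Jensen) inequality
\[
 \frac{(\sum_k x_k)^p}{(\sum_k y_k)^{p-1}}\le\sum_k\frac{x_k^p}{y_k^{p-1}},\qquad x_k\ge0,\ y_k>0.
\]
This is H\"older's inequality with exponents $p$ and $p/(p-1)$ applied to $x_k=(x_k y_k^{-(p-1)/p})\,y_k^{(p-1)/p}$. It extends to countable sums by a limiting argument, since both sides behave monotonically under truncation. We use it together with the fact that enlarging the denominator from $\sum_k\Jump g(\tau_k)$ to $a_i$ only decreases the left-hand side. This bounds each cell's term by $\sum_{\tau\in[t_{i-1},t_i)}J_F(\tau)^p/[\Jump g(\tau)]^{p-1}$. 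Summing over the disjoint cells and taking the supremum over $P$ gives the upper bound.

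\emph{Expected obstacle.} The main difficulty is handling countably many jumps with accumulation points in the upper bound. The countable version of the convexity inequality must be justified, and $\sum J_F(\tau)$ must converge. Convergence holds because $\sum J_F\le L\sum[\Jump g]^\alpha<\infty$ by \eqref{eq:exact-F-bound}. For the countable inequality, truncate to finitely many jumps, apply the finite inequality, and pass to the limit on the left using continuity of $x\mapsto x^p$. One further point: a cell that contains no jump of $g$ but has $a_i>0$ must contribute zero. This follows because Theorem~\ref{thm:exact-jump} gives zero variation on that cell. If the atomic series diverges, the lower bound already gives $+\infty$ and the identity still holds.
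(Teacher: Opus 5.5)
Your proposal is correct and follows essentially the same route as the paper: the upper bound applies Theorem~\ref{thm:exact-jump} on each cell together with $g(v)-g(u)\ge\sum\Jump g(\tau)$ and the weighted H\"older inequality, extended to countable sums by truncation, while the lower bound inserts shrinking right cells $[\tau_j,\tau_j+\eta]$ at finitely many jump points and uses the right limits from Proposition~\ref{prop:right-limits}. Your separate handling of zero-increment cells, and your justification of the countable inequality by passing to the limit through convergence of both series, are slightly more explicit than the paper's wording (the left-hand side is not itself monotone under truncation, but the continuity argument you give covers this).
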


\begin{proof}
Fix a partition cell $[u,v]$. By Theorem~\ref{thm:exact-jump},
\[
 d_H(F(u),F(v))
 \le
 \sum_{\tau\in[u,v)}J_F(\tau),
\]
while
\[
 g(v)-g(u)
 \ge
 \sum_{\tau\in[u,v)}\Jump g(\tau).
\]
If the sum of clock jumps in the cell is zero, Theorem~\ref{thm:exact-jump} gives $d_H(F(u),F(v))=0$, so the cell has zero contribution. Otherwise, weighted H\"older gives, first for finite partial sums and then by monotone passage to the countable sum,
\[
 \frac{\left(\sum_{\substack{\tau\in[u,v)\\ \Jump g(\tau)>0}}J_F(\tau)\right)^p}
 {\left(\sum_{\substack{\tau\in[u,v)\\ \Jump g(\tau)>0}}\Jump g(\tau)\right)^{p-1}}
 \le
 \sum_{\substack{\tau\in[u,v)\\ \Jump g(\tau)>0}}
 \frac{J_F(\tau)^p}{[\Jump g(\tau)]^{p-1}}.
\]
Hence every partition sum is bounded above by the atomic series, which proves
\[
 \mathcal R^H_{p,g}(F;[s,t])
 \le
 \sum_{\substack{\tau\in[s,t)\\ \Jump g(\tau)>0}}
 \frac{J_F(\tau)^p}{[\Jump g(\tau)]^{p-1}}.
\]

For the reverse inequality, take any finite set of jump points
\[
 A=\{\tau_1,\ldots,\tau_N\}\subset[s,t).
\]
Choose pairwise disjoint right intervals $[\tau_j,r_j]$ that can be inserted into one partition. As $r_j\downarrow\tau_j$,
\[
 d_H(F(\tau_j),F(r_j))\to J_F(\tau_j),
 \qquad
 g(r_j)-g(\tau_j)\to\Jump g(\tau_j).
\]
The corresponding cells therefore give
\[
 \mathcal R^H_{p,g}(F;[s,t])
 \ge
 \sum_{j=1}^N
 \frac{J_F(\tau_j)^p}{[\Jump g(\tau_j)]^{p-1}}.
\]
Taking the supremum over all finite subsets $A$ yields the reverse inequality and the stated identity.
\end{proof}

\begin{corollary}[Atomic formula for the Riesz $p$-variation of the prescribed-point selection]\label{cor:atomic-riesz-selection}
Let $p>1$, fix $\theta\in I$ and $x_\theta\in F(\theta)$, and let $f:I\to X$ be the prescribed-point selection supplied by Theorem~\ref{thm:main-preservation}, with $f(\theta)=x_\theta$. Then, for every $a\le s<t\le b$,
\[
 \mathcal R_{p,g}(f;[s,t])
 =
 \sum_{\substack{\tau\in[s,t)\\ \Jump g(\tau)>0}}
 \frac{J_f(\tau)^p}{[\Jump g(\tau)]^{p-1}}
 \le
 \mathcal R^H_{p,g}(F;[s,t]).
\]
Thus, for $\alpha>1$, the same prescribed-point selection preserves Riesz $p$-variation through the pointwise bounds on its jumps.
\end{corollary}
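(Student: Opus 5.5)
The plan is to rerun the proof of Theorem~\ref{thm:exact-riesz-atomic} almost verbatim for the single-valued map $f$, using Corollary~\ref{cor:exact-selection} in place of Theorem~\ref{thm:exact-jump}. The inequality with $\mathcal R^H_{p,g}(F;[s,t])$ will then follow from the jump comparison in Corollary~\ref{cor:jump-preservation}.

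\emph{Upper bound for $\mathcal R_{p,g}(f;[s,t])$.} Fix a partition cell $[u,v]\subset[s,t]$. By the triangle inequality and Corollary~\ref{cor:exact-selection},
\[
 d(f(u),f(v))\le\Var(f;[u,v])=\sum_{\tau\in[u,v)}J_f(\tau).
\]
By \eqref{eq:jump-hierarchy}, only jump points of $g$ contribute to this sum. In addition, $g(v)-g(u)=\mu_g([u,v))\ge\sum_{\tau\in[u,v)}\Jump g(\tau)$.

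If $g(v)=g(u)$, there are no jumps of $g$ in $[u,v)$. Then $J_f\equiv0$ there, so $f(u)=f(v)$ and the cell contributes zero. Otherwise, if the jump sum of $g$ in the cell vanishes, the displacement vanishes as well. If it does not vanish, the weighted H\"older (Jensen) inequality
\[
 \Bigl(\sum x_\tau\Bigr)^p\Bigl(\sum m_\tau\Bigr)^{1-p}\le\sum x_\tau^p m_\tau^{1-p}
\]
applies, first to finite partial sums and then by monotone passage to the countable sum. Together with the monotonicity of $y\mapsto y^{1-p}$, this bounds the cell's term by the atomic series over $[u,v)$. Summing over the disjoint cells $[t_{i-1},t_i)$ gives the upper bound.

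\emph{Lower bound.} Take a finite set of jump points $\tau_1<\cdots<\tau_N$ in $[s,t)$ and insert short disjoint cells $[\tau_j,r_j]$ into a partition. As $r_j\downarrow\tau_j$, Proposition~\ref{prop:right-limits} gives $d(f(\tau_j),f(r_j))\to J_f(\tau_j)$. Left-continuity of $g$ gives $g(r_j)-g(\tau_j)\to\Jump g(\tau_j)>0$. Since all remaining terms are nonnegative, $\mathcal R_{p,g}(f;[s,t])$ dominates each finite partial atomic sum, and taking the supremum over $A$ gives equality.

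\emph{Comparison with $F$.} Corollary~\ref{cor:jump-preservation} gives $J_f(\tau)\le J_F(\tau)$ at every point. Comparing the atomic series term by term with the formula of Theorem~\ref{thm:exact-riesz-atomic} yields $\mathcal R_{p,g}(f;[s,t])\le\mathcal R^H_{p,g}(F;[s,t])$.

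The main obstacle is the upper bound. It requires a careful treatment of the convention for zero clock increments, and of the passage to countably many, possibly accumulating, jumps inside a single cell. For the latter I will use monotone convergence of finite partial sums, as in the proof of Theorem~\ref{thm:exact-riesz-atomic}.
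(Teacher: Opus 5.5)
Your proposal is correct and follows the paper's route: the paper's proof says that the argument of Theorem~\ref{thm:exact-riesz-atomic} applies verbatim to $f$ once Corollary~\ref{cor:exact-selection} supplies its jump decomposition, and then uses $J_f\le J_F$ from Corollary~\ref{cor:jump-preservation}; your proposal writes that argument out in detail. Two small precisions. First, $g(r_j)-g(\tau_j)\to\Jump g(\tau_j)$ follows from monotonicity of $g$ (existence of $g(\tau_j+)$) and the definition of $\Jump g$, not from left-continuity. Second, in the countable passage the left side of the weighted H\"older inequality is not monotone in the finite index set, but it still converges by continuity because $\sum m_\tau>0$.
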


\begin{proof}
By Theorem~\ref{thm:main-preservation}, the selection $f$ is $g$-H\"older of exponent $\alpha>1$, and Corollary~\ref{cor:exact-selection} gives its exact jump decomposition. Therefore the argument of Theorem~\ref{thm:exact-riesz-atomic} applies verbatim to $f$, yielding the displayed atomic identity with $J_f$ in place of $J_F$. Corollary~\ref{cor:jump-preservation} gives $J_f(\tau)\le J_F(\tau)$ for every jump point $a\le\tau<b$ of $g$, that is, whenever $\Jump g(\tau)>0$, which gives the final inequality.
\end{proof}

\begin{proposition}[Sharp exponent in the Riesz $p$-variation bound]\label{prop:riesz-exponent-law}
Let $p>1$, let $L=[F]_{\alpha,g}$, and set $q=1+p(\alpha-1)$. Then, for every $a\le s<t\le b$,
\[
 \mathcal R^H_{p,g}(F;[s,t])
 \le
 L^p
 \sum_{\tau\in[s,t)}[\Jump g(\tau)]^q.
\]
Both the exponent $q$ and the coefficient $L^p$ are attained in the one-jump class and therefore cannot be improved in general.
\end{proposition}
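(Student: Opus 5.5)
The plan is to combine the exact atomic formula of Theorem~\ref{thm:exact-riesz-atomic} with the pointwise jump bound \eqref{eq:set-jump-bound}. The standing assumptions of Section~\ref{sec:exact-jumps} are in force: $X$ complete, $g$ left-continuous and nondecreasing, and $\alpha>1$.

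\textbf{Upper bound.} For every $a\le s<t\le b$, Theorem~\ref{thm:exact-riesz-atomic} gives
\[
 \mathcal R^H_{p,g}(F;[s,t])=\sum_{\substack{\tau\in[s,t)\\ \Jump g(\tau)>0}}\frac{J_F(\tau)^p}{[\Jump g(\tau)]^{p-1}}.
\]
Insert $J_F(\tau)\le L[\Jump g(\tau)]^\alpha$ from \eqref{eq:set-jump-bound} into each term. Each summand is then bounded by
\[
 L^p[\Jump g(\tau)]^{p\alpha-(p-1)}=L^p[\Jump g(\tau)]^{q},\qquad q=1+p(\alpha-1).
\]
Terms with $\Jump g(\tau)=0$ contribute nothing on either side, so summing gives the inequality. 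The right-hand side is finite, because $q>1$ and $\sum m_\tau^q\le M^{q-1}\sum m_\tau$ as in the proof of Theorem~\ref{thm:atomic-power}. This part is essentially immediate.

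\textbf{Sharpness.} I will use the one-jump class modeled on Example~\ref{ex:mixed-clock}. Take $g=m\mathbf 1_{\{t>\tau\}}$ (possibly plus a continuous part), $K=\{0,R\}\subset\mathbb R$, and
\[
 F(t)=K \text{ for } t\le\tau,\qquad F(t)=K+Lm^\alpha \text{ for } t>\tau.
\]
First I check that $[F]_{\alpha,g}=L$ exactly. If $s\le\tau<t$, then $d_H(F(s),F(t))=Lm^\alpha$ and $g(t)-g(s)=m$ in the pure-jump case, so equality holds for these pairs. All other pairs give zero on the left-hand side.

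Then $J_F(\tau)=Lm^\alpha$, and Theorem~\ref{thm:exact-riesz-atomic} yields
\[
 \mathcal R^H_{p,g}(F;[0,1])=\frac{L^pm^{p\alpha}}{m^{p-1}}=L^pm^q,
\]
so equality holds in the proposition.

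To show that $q$ cannot be replaced, I let $m$ vary over $(0,\infty)$. Suppose a bound of the form $C\,L^p m^{q'}$ held on this family. Comparing as $m\to0$ and as $m\to\infty$ forces $q'=q$, since the family realizes exactly $L^pm^q$. With $q'=q$, the coefficient must satisfy $C\ge1$, so $L^p$ cannot be reduced either.

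\textbf{Main obstacle.} The only delicate point is to phrase "cannot be improved" precisely, namely as optimality of the exponent within the one-jump family under scaling of $m$. The cleanest choice is the pure-jump clock, which makes $[F]_{\alpha,g}=L$ exact. With a continuous component added, $g(t)-g(s)\ge m$ still gives $[F]_{\alpha,g}=L$ by taking $s=\tau$ and $t\downarrow\tau$. So either version works.
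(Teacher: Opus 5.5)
Your proposal is correct and follows the paper's own proof: you substitute the pointwise bound $J_F(\tau)\le L[\Jump g(\tau)]^\alpha$ into the atomic formula of Theorem~\ref{thm:exact-riesz-atomic}, and for sharpness you use a single clock jump of size $m$ with $J_F=Lm^\alpha$. Your explicit check that $[F]_{\alpha,g}=L$ and your scaling argument in $m$ for the exponent spell out what the paper states in one line.
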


\begin{proof}
By \eqref{eq:set-jump-bound}, for every $a\le\tau<b$ with $\Jump g(\tau)>0$,
\[
 J_F(\tau)\le L[\Jump g(\tau)]^\alpha.
\]
Substitution into Theorem~\ref{thm:exact-riesz-atomic} gives
\[
 \frac{J_F(\tau)^p}{[\Jump g(\tau)]^{p-1}}
 \le
 L^p[\Jump g(\tau)]^{\alpha p-(p-1)}
 =L^p[\Jump g(\tau)]^{1+p(\alpha-1)}.
\]
To show sharpness, take a single clock jump of size $m>0$ and a Hausdorff jump $J_F=Lm^\alpha$. Then
\[
 \mathcal R^H_{p,g}=\frac{(Lm^\alpha)^p}{m^{p-1}}
 =L^pm^{1+p(\alpha-1)},
\]
so neither the exponent nor the coefficient can be reduced in general.
\end{proof}

\begin{remark}
The identity in Theorem~\ref{thm:exact-riesz-atomic} concerns Riesz $p$-variation measured with respect to the Stieltjes clock $g$. This is distinct from Wiener $p$-variation; in particular, the jump identity above does not extend in general to Wiener $p$-variation \cite{Chistyakov1998}.
\end{remark}

When $X$ is complete, the preceding results combine into the following chain of estimates for every $a\le s<t\le b$:
\[
 \begin{aligned}
 d(f(s),f(t))
 &\le\Var(f;[s,t])\\
 &\le\VH(F;[s,t])\\
 &=\sum_{\tau\in[s,t)}J_F(\tau)\\
 &\le[F]_{\alpha,g}\sum_{\tau\in[s,t)}[\Jump g(\tau)]^\alpha\\
 &\le[F]_{\alpha,g}|g(t)-g(s)|^\alpha.
 \end{aligned}
\]

\section{Explicit examples and sharpness}\label{sec:examples}

\begin{example}[A one-jump nonconvex map]\label{ex:one-jump}
Let $I=[0,1]$, $X=\mathbb R$, and define $g:I\to\mathbb R$ by
\[
 g(t)=
 \begin{cases}
 0,&0\le t\le\tfrac12,\\
 1,&\tfrac12<t\le1.
 \end{cases}
\]
Define $F:I\to\K(\mathbb R)$ by $F(t)=\{0,2\}$ when $g(t)=0$ and $F(t)=\{1,3\}$ when $g(t)=1$. Since $d_H(\{0,2\},\{1,3\})=1$, the $g$-H\"older seminorm of $F$ equals $1$ for every $\alpha>1$.

Now define the selection $f:I\to\mathbb R$ by
\[
 f(t)=
 \begin{cases}
 0,&0\le t\le\tfrac12,\\
 1,&\tfrac12<t\le1.
 \end{cases}
\]
Then the $g$-H\"older seminorm of $f$ is also $1$. Hence a single clock jump already permits a nonconstant compact-valued map that is $g$-H\"older of exponent $\alpha>1$, together with a selection having the same seminorm.
\end{example}

\begin{example}[Nonmonotone clock: regularity without local variation preservation]\label{ex:nonmonotone-variation-obstruction}
Let $X=\{a_1,b_1,c_1,a_2,b_2,c_2\}$ be equipped with the shortest-path metric of the six-cycle $a_1-b_1-c_1-a_2-b_2-c_2-a_1$, with every edge of length one. Set $A=\{a_1,a_2\}$, $B=\{b_1,b_2\}$, and $C=\{c_1,c_2\}$. Then $A,B,C\in\K(X)$ and $d_H(A,B)=d_H(B,C)=d_H(C,A)=1$.

Let $I=[0,3]$ and define $g:I\to\mathbb R$ by
\[
 g(t)=
 \begin{cases}
 0, & 0\le t<1,\\
 1, & 1\le t<2,\\
 2, & 2\le t<3,\\
 0, & t=3.
 \end{cases}
\]
Thus $g(I)=\{0,1,2\}$. Define $F:I\to\K(X)$ by $F(t)=A$ when $g(t)=0$, $F(t)=B$ when $g(t)=1$, and $F(t)=C$ when $g(t)=2$. For every $\alpha\ge1$, one has $[F]_{\alpha,g}=1$: the two unit clock gaps correspond to Hausdorff distance one, while $d_H(A,C)=1\le2^\alpha$.

Define $\widehat f:g(I)\to X$ by $\widehat f(0)=a_1$, $\widehat f(1)=b_1$, and $\widehat f(2)=c_1$, and set $f=\widehat f\circ g:I\to X$. Since $d(a_1,b_1)=d(b_1,c_1)=1$ and $d(a_1,c_1)=2\le2^\alpha$, $f$ is a selection of $F$ with
\[
 [f]_{\alpha,g}=1=[F]_{\alpha,g}.
\]
Thus the regularity bound is attained with equality.

No selection $f:I\to X$ with $[f]_{\alpha,g}\le1$ can also satisfy $\Var(f;[s,t])\le\VH(F;[s,t])$ on every subinterval $[s,t]\subset I$. Such a selection is constant on every level set of $g$ and is therefore determined by points $x_A\in A$, $x_B\in B$, and $x_C\in C$. The two unit clock gaps require $d(x_A,x_B)\le1$ and $d(x_B,x_C)\le1$. In the six-cycle metric, these inequalities force the same index throughout: either $x_A=a_1$, $x_B=b_1$, $x_C=c_1$, or $x_A=a_2$, $x_B=b_2$, $x_C=c_2$. Hence $d(x_C,x_A)=2$.

On the subinterval $[2,3]$, however, $\VH(F;[2,3])=d_H(C,A)=1$, whereas $f$ is constant with value $x_C$ on $[2,3)$ and takes the value $x_A$ at $t=3$. Therefore
\[
 \Var(f;[2,3])=d(x_C,x_A)=2>1=\VH(F;[2,3]).
\]
Thus preservation of the $g$-H\"older bound does not, without monotonicity of the clock, guarantee preservation of the local Hausdorff-variation bound. Here the obstruction is created by the return of $g$ from level $2$ to the previously attained value $0$.
\end{example}

For comparison, when $g(t)=t$, Corollary~\ref{cor:continuous-rigidity} implies that every compact-valued $g$-H\"older map of exponent $\alpha>1$ is constant, whereas Example~\ref{ex:alpha-one} shows that nontrivial continuous variation is possible at the boundary $\alpha=1$.

Recall the cumulative function
\[
 A_{\alpha,g}(t)=\lambda_{\alpha,g}([a,t))=\sum_{\tau\in[a,t)}[\Jump g(\tau)]^\alpha.
\]
The following two-point construction shows that the regularity, variation, and jump bounds obtained above are all attained by the same example.

\begin{proposition}[Simultaneous attainment of the regularity, variation, and jump bounds]\label{prop:sharpness}
Let $g:I=[a,b]\to\mathbb R$ be a left-continuous pure-jump nondecreasing clock with at least one positive jump, and let $\alpha>1$ and $L,R>0$. Let $A_{\alpha,g}$ be the cumulative function displayed above, and define $F:I\to\K(\mathbb R)$ and $f:I\to\mathbb R$ by
\begin{equation}\label{eq:sharp-family}
 F(t)=L A_{\alpha,g}(t)+\{0,R\},
 \qquad
 f(t)=L A_{\alpha,g}(t).
\end{equation}
Then $f$ is a selection of $F$ and
\begin{equation}\label{eq:sharp-constants}
 [F]_{\alpha,g}=[f]_{\alpha,g}=L.
\end{equation}
Moreover, for every $a\le s<t\le b$,
\begin{equation}\label{eq:sharp-variation}
 d(f(s),f(t))
 =\Var(f;[s,t])
 =\VH(F;[s,t])
 =L\bigl(A_{\alpha,g}(t)-A_{\alpha,g}(s)\bigr).
\end{equation}
At every $a\le\tau<b$ with $\Jump g(\tau)>0$,
\begin{equation}\label{eq:sharp-jumps}
 J_f(\tau)=J_F(\tau)=L[\Jump g(\tau)]^\alpha.
\end{equation}
Thus the regularity, local variation, and jump estimates obtained above are all sharp for this example.
\end{proposition}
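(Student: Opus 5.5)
Everything here reduces to two elementary facts about the pure-jump clock. First, $A:=A_{\alpha,g}$ is nondecreasing with $A(t)-A(s)=\sum_{\tau\in[s,t)}[\Jump g(\tau)]^\alpha$. Second, purity gives $g(t)-g(s)=\sum_{\tau\in[s,t)}\Jump g(\tau)$, so superadditivity of $x\mapsto x^\alpha$ (the inequality $u^\alpha+v^\alpha\le(u+v)^\alpha$ of Section~\ref{sec:finite-chain}, extended to countable sums by monotone passage to the limit) yields
\[
 0\le A(t)-A(s)\le [g(t)-g(s)]^\alpha\qquad(s<t).
\]

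\textbf{Selection and $g$-H\"older bounds.} That $f$ is a selection is immediate, since $f(t)=LA(t)+0\in F(t)$. Both $F(s)$ and $F(t)$ are translates of $\{0,R\}$, so $d_H(F(s),F(t))=L|A(t)-A(s)|=|f(t)-f(s)|$. The displayed inequality then gives $[F]_{\alpha,g},[f]_{\alpha,g}\le L$. For the reverse inequality, fix a jump point $\tau$ with $m=\Jump g(\tau)>0$ and take $r\downarrow\tau$. Then $g(r)-g(\tau)\to m$ (the measure $\mu_g([\tau,r))$ decreases to $\mu_g(\{\tau\})$), and $A(r)-A(\tau)=\lambda_{\alpha,g}([\tau,r))\to m^\alpha$ by the same continuity from above for the finite measure $\lambda_{\alpha,g}$. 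The ratio $|f(r)-f(\tau)|/|g(r)-g(\tau)|^\alpha$ therefore tends to $L$, so $[f]_{\alpha,g}\ge L$, and likewise $[F]_{\alpha,g}\ge L$. This is the one place the hypothesis of at least one positive jump is used.

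\textbf{Variation identities.} Since $f=LA$ is real-valued and nondecreasing, $\Var(f;[s,t])=f(t)-f(s)=L(A(t)-A(s))=d(f(s),f(t))$. For $F$, each partition sum equals $\sum_i L(A(t_i)-A(t_{i-1}))$, which telescopes to $L(A(t)-A(s))$. Hence $\VH(F;[s,t])=L(A(t)-A(s))$, which completes the chain of equalities in \eqref{eq:sharp-variation}.

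\textbf{Jumps.} The right limits exist by monotonicity (no completeness argument is needed, since we are in $\mathbb R$). As in the lower-bound step, $A(\tau+)-A(\tau)=\lambda_{\alpha,g}(\{\tau\})=[\Jump g(\tau)]^\alpha$. Consequently $J_f(\tau)=L[\Jump g(\tau)]^\alpha$, and since $F(\tau+)=LA(\tau+)+\{0,R\}$, the translation identity for $d_H$ gives the same value for $J_F(\tau)$. This matches the upper bounds in \eqref{eq:jump-by-jump}, so all the estimates are attained.

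\textbf{Main obstacle.} The only delicate step is the inequality $A(t)-A(s)\le[g(t)-g(s)]^\alpha$. It is exactly where the pure-jump hypothesis enters: it lets us write $g(t)-g(s)$ as the countable sum of the jumps in $[s,t)$. Superadditivity of $x^\alpha$ for $\alpha>1$ is applied first to finite partial sums and then passed to the limit. Note that Theorem~\ref{thm:atomic-power} would also give this inequality for general left-continuous nondecreasing clocks, via the trivial partition. The additional hypotheses are therefore only a convenience.
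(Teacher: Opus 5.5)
Your proof is correct and follows the same route as the paper. Both use the translation identity for $d_H$ and the bound $A_{\alpha,g}(t)-A_{\alpha,g}(s)\le[g(t)-g(s)]^\alpha$, obtained from the jump decomposition of $g$ and superadditivity of $x^\alpha$; the limiting quotient at a positive jump gives the lower bound, and telescoping of the nondecreasing $A_{\alpha,g}$ gives the variation and jump identities. Your closing remark is also right: $g(t)-g(s)\ge\sum_{\tau\in[s,t)}\Jump g(\tau)$ holds for any left-continuous nondecreasing clock, so the pure-jump hypothesis is only a convenience here, consistent with Example~\ref{ex:mixed-clock}.
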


\begin{proof}
Fix $a\le s<t\le b$. Translations of the same compact set have Hausdorff distance equal to the translation size, so
\[
 d_H(F(s),F(t))
 =L\bigl(A_{\alpha,g}(t)-A_{\alpha,g}(s)\bigr)
 =L\sum_{\tau\in[s,t)}[\Jump g(\tau)]^\alpha.
\]
Since $g$ is pure-jump and nondecreasing,
\[
 g(t)-g(s)=\sum_{\tau\in[s,t)}\Jump g(\tau),
\]
and therefore
\[
 d_H(F(s),F(t))
 \le L[g(t)-g(s)]^\alpha.
\]
Hence $[F]_{\alpha,g}\le L$. Across any positive jump $\tau$, the defining quotient approaches $L$ from the right, so the optimal constant equals $L$. The same argument applies to $f$.

The function $A_{\alpha,g}$ is nondecreasing. Hence the variation of $f$ on $[s,t]$ equals $L\bigl(A_{\alpha,g}(t)-A_{\alpha,g}(s)\bigr)$. The Hausdorff increments of $F$ telescope in the same way, proving \eqref{eq:sharp-variation}. Equation~\eqref{eq:sharp-jumps} follows directly from the jump of $A_{\alpha,g}$ at each jump point $\tau$ of $g$.
\end{proof}

\begin{remark}
Proposition~\ref{prop:sharpness} shows simultaneously that the constants in \eqref{eq:variation-preservation}, \eqref{eq:seminorm-preservation}, and \eqref{eq:jump-by-jump} are sharp in general. The example remains nonconvex because every value consists of two points. If the prescribed point is the lower or upper branch at a fixed time, the corresponding translated branch gives a prescribed-point selection for which the same bounds are attained.
\end{remark}

\section{Convex \texorpdfstring{$g$}{g}-H\"older selections below the Lipschitz threshold}\label{sec:convex-sub}

This section considers the range $0<\alpha<1$ for compact convex values in Euclidean space. The classical Steiner point map preserves the H\"older exponent under the Hausdorff metric. We additionally require the selection to pass through a prescribed graph point. For the selection results below, $g:I\to\mathbb R$ need not be monotone. Throughout this section, $\mathcal K_c(\mathbb R^n)$ denotes the family of nonempty compact convex subsets of $\mathbb R^n$.

Set
\[
 \Lambda_n:=\frac{2\,\Gamma\!\left(\frac n2+1\right)}{\sqrt\pi\,\Gamma\!\left(\frac{n+1}{2}\right)}.
\]
Here, $\Gamma$ denotes the Euler gamma function. For the classical Steiner point $\operatorname{st}(K)$ of $K\in\mathcal K_c(\mathbb R^n)$ one has
\[
 \operatorname{st}(K)\in K,
 \qquad
 \|\operatorname{st}(A)-\operatorname{st}(B)\|
 \le \Lambda_n d_H(A,B);
\]
see, for example, \cite{ArutyunovObukhovskii2017,DeutschLiPark1989}. Generalized Steiner selections preserving H\"older order are also developed by Dentcheva \cite{Dentcheva2000}.

\begin{proposition}[Steiner selection preserving the $g$-H\"older exponent]\label{prop:convex-steiner}
Let $0<\alpha<1$, let $g:I\to\mathbb R$, and let $F:I\to\mathcal K_c(\mathbb R^n)$ be $g$-H\"older. Then $F$ admits a selection $f:I\to\mathbb R^n$ with
\[
 [f]_{\alpha,g}\le \Lambda_n [F]_{\alpha,g}.
\]
In particular, the H\"older exponent is preserved.
\end{proposition}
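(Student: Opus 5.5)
The plan is to take the Steiner point selection directly, $f(t):=\operatorname{st}(F(t))$ for $t\in I$, and read off all properties from the two facts about $\operatorname{st}$ recalled just before the statement.

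First, $f$ is a selection, because $\operatorname{st}(K)\in K$ for every $K\in\mathcal K_c(\mathbb R^n)$, so $f(t)\in F(t)$ for each $t\in I$.

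Second, for the H\"older bound, fix $s,t\in I$. The Lipschitz property of the Steiner map with respect to $d_H$ combined with the $g$-H\"older estimate \eqref{eq:g-holder} for $F$, taken with $L=[F]_{\alpha,g}$, gives
\[
 \|f(s)-f(t)\|
 \le \Lambda_n\, d_H(F(s),F(t))
 \le \Lambda_n [F]_{\alpha,g}\,|g(t)-g(s)|^\alpha .
\]
Since $[f]_{\alpha,g}$ is the minimal such constant, this yields $[f]_{\alpha,g}\le\Lambda_n[F]_{\alpha,g}$, and in particular $f$ is $g$-H\"older of the same exponent $\alpha$.

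No monotonicity of $g$ and no structure of $I$ enters: the argument is pointwise in the pair $(s,t)$, and it also automatically respects the fact that $F$ is constant on level sets of $g$. There is essentially no obstacle. The only point requiring care is that the cited Lipschitz constant $\Lambda_n$ of the Steiner point is the one valid for the Euclidean norm on $\mathbb R^n$ and the Hausdorff metric induced by it. I would simply invoke \cite{ArutyunovObukhovskii2017,DeutschLiPark1989} for this rather than reprove it via the support-function integral representation $\operatorname{st}(K)=n\int_{S^{n-1}}h_K(u)\,u\,d\sigma(u)$.

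Note that this selection does not pass through a prescribed point; that refinement is deferred to the results that follow.
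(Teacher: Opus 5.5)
Your proof is correct and is essentially the paper's own argument: take $f(t)=\operatorname{st}(F(t))$, use $\operatorname{st}(K)\in K$ for the selection property, and chain the $\Lambda_n$-Lipschitz bound of the Steiner point with the $g$-H\"older estimate for $F$ to obtain $[f]_{\alpha,g}\le\Lambda_n[F]_{\alpha,g}$.
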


\begin{proof}
Let $L=[F]_{\alpha,g}$ and define $f(t)=\operatorname{st}(F(t))$. Then $f(t)\in F(t)$ and, for all $s,t\in I$,
\[
 \|f(t)-f(s)\|
 \le \Lambda_n d_H(F(t),F(s))
 \le \Lambda_n L|g(t)-g(s)|^\alpha.
\]
\end{proof}

Proposition~\ref{prop:convex-steiner} does not prescribe the value of the selection at a chosen time. This raises the prescribed-point question: if $\theta\in I$ and $x_\theta\in F(\theta)$ are fixed, can one impose $f(\theta)=x_\theta$ while retaining the original constant $[F]_{\alpha,g}$? The answer depends on the dimension.

\begin{proposition}[Same-constant prescribed-point selection in one dimension]\label{prop:scalar-subholder}
Let $0<\alpha<1$, let $g:I\to\mathbb R$, and let $F:I\to\mathcal K_c(\mathbb R)$ be $g$-H\"older. For every $\theta\in I$ and every $x_\theta\in F(\theta)$, there exists a selection $f:I\to\mathbb R$ such that
\[
 f(\theta)=x_\theta,
 \qquad
 [f]_{\alpha,g}\le [F]_{\alpha,g}.
\]
\end{proposition}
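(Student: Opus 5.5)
In one dimension the values are compact intervals $F(t)=[\ell(t),r(t)]$, and $d_H(F(s),F(t))=\max\{|\ell(s)-\ell(t)|,|r(s)-r(t)|\}$. Put $L=[F]_{\alpha,g}$. Then both endpoint functions $\ell$ and $r$ satisfy $|\ell(s)-\ell(t)|\le L|g(s)-g(t)|^\alpha$, and likewise for $r$. So the lower and upper branches are already same-constant selections. The plan is to build the prescribed-point selection from them by a monotone ``clamping'' construction, which preserves moduli of continuity.

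Write $x_\theta=\ell(\theta)+\lambda(r(\theta)-\ell(\theta))$. The convex combination $\ell+\lambda(r-\ell)$ is again $L$-controlled, but it only matches $x_\theta$ pointwise, so I prefer a cleaner device. Define
\[
 \omega(s,t)=L|g(s)-g(t)|^\alpha,
\]
and set
\[
 f(t)=\min\bigl\{r(t),\max\{\ell(t),\,x_\theta\}\bigr\},
\]
the projection of the constant $x_\theta$ onto $F(t)$. Clearly $f(t)\in F(t)$, and $f(\theta)=x_\theta$ because $x_\theta\in[\ell(\theta),r(\theta)]$.

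The key step is the elementary fact that $\max$ and $\min$ are $1$-Lipschitz in the sup norm. Concretely, $|\max\{a,c\}-\max\{a',c\}|\le|a-a'|$, and the same holds for $\min$. Applying this twice gives
\[
 |f(s)-f(t)|\le\max\bigl\{|r(s)-r(t)|,\,|\max\{\ell(s),x_\theta\}-\max\{\ell(t),x_\theta\}|\bigr\}\le\max\{|r(s)-r(t)|,|\ell(s)-\ell(t)|\},
\]
which equals $d_H(F(s),F(t))$. This is at most $L|g(t)-g(s)|^\alpha$. Note that the clamped value is well defined since $\ell\le r$, and that no monotonicity of $g$ or restriction on $\alpha$ is used. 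In fact, the argument shows that $f$ is a $1$-Lipschitz function of $F$ in the Hausdorff metric.

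I do not expect a real obstacle. The only point to verify carefully is the identity $d_H([\ell,r],[\ell',r'])=\max\{|\ell-\ell'|,|r-r'|\}$ for compact intervals, which is what makes the one-dimensional case work. This identity fails in higher dimensions, and the prescribed-point nearest-point map is then no longer $1$-Lipschitz with respect to $d_H$. That failure is consistent with the paper's announced negative result for $n\ge2$.
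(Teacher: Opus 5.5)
Your proposal is correct and is essentially the paper's proof. You use the same selection $f(t)=\min\{r(t),\max\{\ell(t),x_\theta\}\}$, the metric projection of $x_\theta$ onto $F(t)$, and the same key fact $|P_A(x)-P_B(x)|\le d_H(A,B)$. The paper only asserts that fact, whereas you justify it via the endpoint formula $d_H([\ell,r],[\ell',r'])=\max\{|\ell-\ell'|,|r-r'|\}$ and the $1$-Lipschitz property of $\max$ and $\min$.

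As a side note, the convex combination $\ell+\lambda(r-\ell)$ that you set aside would also have worked. Agreeing with $x_\theta$ at the single time $\theta$ is all the prescribed-point condition asks for.
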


\begin{proof}
Write $F(t)=[a(t),b(t)]$ and let $x_\theta\in F(\theta)$ be fixed. Define
\[
 f(t)=\min\{b(t),\max\{a(t),x_\theta\}\},
\]
that is, $f(t)$ is the metric projection of the fixed point $x_\theta$ onto the interval $F(t)$. Then $f(t)\in F(t)$ and $f(\theta)=x_\theta$. For compact intervals $A,B\subset\mathbb R$ and every $x\in\mathbb R$,
\[
 |P_A(x)-P_B(x)|\le d_H(A,B).
\]
Hence, with $L=[F]_{\alpha,g}$,
\[
 |f(t)-f(s)|\le d_H(F(t),F(s))
 \le L|g(t)-g(s)|^\alpha.
\]
\end{proof}

The same-constant result above is specific to one dimension. Already in the plane, three compact convex values are enough to destroy the same-constant property under a prescribed-point constraint.

\begin{theorem}[Higher-dimensional obstruction to same-constant prescribed-point selection]\label{thm:higher-dimensional-obstruction}
For every $0<\alpha<1$ and every $n\ge2$, there exist a left-continuous nondecreasing map $g:I\to\mathbb R$, a map $F:I\to\mathcal K_c(\mathbb R^n)$ with $[F]_{\alpha,g}=1$, a point $\theta\in I$, and a point $x_\theta\in F(\theta)$ such that no selection $f:I\to\mathbb R^n$ satisfying $f(\theta)=x_\theta$ can satisfy $[f]_{\alpha,g}\le1$.
\end{theorem}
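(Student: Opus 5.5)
The plan is to reduce the statement to a three-level configuration in the plane, realized through a pure-jump step clock, and then embed it into $\mathbb R^n$ via $\mathbb R^2\times\{0\}$. Take $I=[0,2]$ and the left-continuous nondecreasing step clock $g=\mathbf 1_{\{t>1/2\}}+\mathbf 1_{\{t>3/2\}}$. It takes the values $0,1,2$, so $g(I)=\{0,1,2\}$. Put $F(t)=A,B,C$ on the three level sets. Then $[F]_{\alpha,g}\le1$ amounts to
\[
 d_H(A,B)\le1,\qquad d_H(B,C)\le1,\qquad d_H(A,C)\le2^\alpha .
\]
Equality $[F]_{\alpha,g}=1$ will be arranged by making $d_H(A,B)=1$.

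As in Theorem~\ref{thm:arbitrary-clock-regularity}, any selection with $[f]_{\alpha,g}\le1$ is constant on level sets, so it is a triple $(y_A,y_B,y_C)\in A\times B\times C$ with $|y_A-y_B|\le1$, $|y_B-y_C|\le1$ and $|y_A-y_C|\le2^\alpha$. The point is that $2^\alpha<2$ when $\alpha<1$, so the outer constraint is strictly stronger than the one implied by the triangle inequality. This is exactly the failure of superadditivity of $Q_{\alpha,g}$ that drives Lemma~\ref{lem:finite-chain} when $\alpha\ge1$. We must choose $A,B,C$ and the prescribed point $x_\theta$ so that no admissible triple passes through $x_\theta$.

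Two quick computations rule out naive choices, and I will record them first. If the prescribed value is a singleton level, for instance $A=\{x_\theta\}$, then every $z\in C$ lies within $2^\alpha$ of $x_\theta$. Hence any $y\in B$ with $\operatorname{dist}(y,C)\le1$ completes a valid triple, and the obstruction fails. Pure translates $B=A+u$ and $C=A+w$ also fail, because the triple $x,\,x+u,\,x+w$ always works. Likewise, Proposition~\ref{prop:scalar-subholder} shows that collinear configurations fail. So the construction must use genuinely two-dimensional sets with different orientations.

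The candidate I would try first is three long segments through roughly the same region but with different directions. Take $A$ a segment, let $x_\theta$ be an endpoint of $A$, and let $B$ and $C$ be rotated segments placed so that:
\begin{itemize}
\item the part of $B$ within distance $1$ of $x_\theta$ is a small cap near one end of $B$;
\item the part of $C$ within distance $2^\alpha$ of $x_\theta$ is a small cap on the opposite side;
\item the distance between these two caps exceeds $1$.
\end{itemize}
The whole-set Hausdorff bounds are then verified segment by segment. For segments this is easy: the Hausdorff distance between two segments is the maximum of the endpoint-to-segment distances, so each check is a short computation with explicit coordinates. The scale of the construction will be chosen in terms of the gap $2-2^\alpha>0$.

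The main obstacle is finding this explicit configuration. The Hausdorff constraints are global, so the sets must stay close everywhere, while the obstruction must be created locally near $x_\theta$. Rotations make the two caps diverge more than any translate can. If segments prove too rigid, I would replace $C$ by a thin triangle. In either case, once the configuration is fixed, the contradiction follows by combining the three pairwise distance inequalities.
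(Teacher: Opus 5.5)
Your reduction is sound and is the paper's own. You use a three-level left-continuous step clock; your levels $0,1,2$ are the paper's $0,\tfrac12,1$ with distances multiplied by $2^\alpha$. Selections with $[f]_{\alpha,g}\le1$ are triples $(y_A,y_B,y_C)$, and the slack $2^\alpha<2$ is what has to be exploited. Your preliminary observations about singleton levels, translates, and collinear configurations are also correct.

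But the proposal stops exactly where the theorem's content begins, so there is a genuine gap. The statement is a pure existence claim. You never exhibit $A,B,C$, never check the three Hausdorff bounds, and never prove that no admissible triple passes through $x_\theta$; you say yourself that finding the configuration is ``the main obstacle.'' The cap conditions you list are heuristics. Nothing in the proposal shows they can be realized together with $d_H(A,B)\le1$, $d_H(B,C)\le1$ and $d_H(A,C)\le2^\alpha$. As it stands this is a plan, not a proof.

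The missing idea, supplied in the paper's appendix, is to make the middle ``cap'' a single point and to create the separation between the caps by perpendicularity. In your normalization, choose $c$ with $\max\{0,2^{2\alpha-1}-1\}<c<2^\alpha-1$; this range is nonempty precisely because $0<\alpha<1$. Put $s=\sqrt{1-c^2}$, $p=(1,0)$, $d=(c,s)$, $w=(-s,c)$, $r=p+d$ and $z=r+s\,w=(1+c)d$. Then $|r|^2=2(1+c)>4^\alpha$ and $|z|=1+c<2^\alpha$. Take $B=[p,r]$, $C=[r,z]$, $x_\theta=0$, and $A=\operatorname{conv}\{0,p,r\}$ as in the paper. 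The segment $A=[0,r]$ works equally well, so your three-segment idea is viable.

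The verification runs as follows. Since $|p+\tau d|^2=1+2c\tau+\tau^2$ for $0\le\tau\le1$, one gets $d_H(A,B)=1$, and a middle value within distance $1$ of $0$ is forced to equal $p$. $B$ and $C$ share the endpoint $r$ and have lengths $1$ and $s$, so $d_H(B,C)\le1$. Checking extreme points, which suffices because distance to a convex set is convex, gives $d_H(A,C)\le\max\{|z|,|p-r|,|z-r|\}=1+c<2^\alpha$. Finally, any $y_C=r+\mu w\in C$ with $|y_C|\le2^\alpha$ must have $\mu>0$, because $|r|>2^\alpha$. Then $|y_C-p|^2=|d+\mu w|^2=1+\mu^2>1$, which contradicts the middle-to-last constraint.
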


The obstruction already occurs with three convex sets in the plane. The construction uses three clock levels: the prescribed value forces the middle selection to a unique point, while the constraints from the first and third levels make the final H\"older bound incompatible with the H\"older constraint between the middle and final levels. Figure~\ref{fig:subholder-dimension} summarizes the geometric contrast; the exact construction is given in Appendix~\ref{app:planar-obstruction}.

\begin{figure}[H]
\centering
\begin{tikzpicture}[x=1cm,y=1cm,>=stealth,font=\small]
  \begin{scope}
    \node[font=\bfseries] at (2.45,2.25) {(a) $n=1$: metric projection};
    \draw[->] (0.1,-0.15) -- (4.9,-0.15);
    \draw[line width=1.1pt] (0.75,1.15) -- (3.25,1.15);
    \fill (0.75,1.15) circle (1.4pt);
    \fill (3.25,1.15) circle (1.4pt);
    \node[right] at (3.25,1.15) {$F(s)$};
    \draw[line width=1.1pt] (1.75,0.25) -- (4.25,0.25);
    \fill (1.75,0.25) circle (1.4pt);
    \fill (4.25,0.25) circle (1.4pt);
    \node[right] at (4.25,0.25) {$F(t)$};
    \draw[dashed] (1.25,-0.35) -- (1.25,1.65);
    \node[below] at (1.25,-0.35) {$x_\theta$};
    \fill (1.25,1.15) circle (2pt);
    \node[above] at (1.25,1.15) {$P_{F(s)}(x_\theta)$};
    \fill (1.75,0.25) circle (2pt);
    \node[below right] at (1.75,0.25) {$P_{F(t)}(x_\theta)$};
  \end{scope}

  \begin{scope}[xshift=7.0cm]
    \node[font=\bfseries,align=center] at (1.15,2.78) {(b) $n\ge2$\\[-1pt] obstruction};
    \coordinate (O) at (0.25,0.05);
    \coordinate (P) at (1.30,0.05);
    \coordinate (R) at (3.00,1.55);
    \coordinate (Z) at (1.95,2.00);
    \draw[dashed] (O) circle (1.05);
    \draw[densely dashed] (O) circle (2.15);
    \draw[line width=0.9pt] (O) -- (P) -- (R) -- cycle;
    \draw[line width=1.5pt] (P) -- (R);
    \draw[line width=1.5pt] (R) -- (Z);
    \fill (O) circle (2pt);
    \fill (P) circle (2pt);
    \fill (R) circle (2pt);
    \fill (Z) circle (2pt);
    \node[below left] at (O) {$0=x_\theta$};
    \node[below] at (P) {$p$};
    \node[right] at (R) {$r$};
    \node[above left] at (Z) {$z$};
    \node at (1.55,0.85) {$K_0$};
    \node[below right] at (2.20,0.78) {$K_1$};
    \node[right] at (2.48,1.90) {$K_2$};
  \end{scope}
\end{tikzpicture}
\caption{Geometric contrast for prescribed-point selection below the Lipschitz threshold. In one dimension, metric projection of a fixed prescribed value onto moving intervals is nonexpansive with respect to the Hausdorff metric. In the planar example, the prescribed origin forces the middle choice to $p$, while the third level creates incompatible distance constraints. The right panel is schematic; the exact coordinates are given in Appendix~\ref{app:planar-obstruction}.}
\label{fig:subholder-dimension}
\end{figure}

\begin{proof}
See Appendix~\ref{app:planar-obstruction}, \emph{Proof of the higher-dimensional obstruction to same-constant prescribed-point selection}.
\end{proof}

Theorem~\ref{thm:higher-dimensional-obstruction} shows that the same-constant prescribed-point property cannot hold in general from dimension two onward. The next result gives an explicit bound.

\begin{theorem}[Prescribed-point selection with an explicit $g$-H\"older bound]\label{thm:subholder-prescribed}
Let $0<\alpha<1$, let $g:I\to\mathbb R$, and let $F:I\to\mathcal K_c(\mathbb R^n)$ be $g$-H\"older. Then, for every $\theta\in I$ and every $x_\theta\in F(\theta)$, there exists a selection $f:I\to\mathbb R^n$ satisfying
\[
 f(\theta)=x_\theta,
 \qquad
 [f]_{\alpha,g}\le 9\Lambda_n[F]_{\alpha,g}.
\]
\end{theorem}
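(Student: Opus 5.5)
The plan is to pass to the clock image exactly as in the proof of Theorem~\ref{thm:arbitrary-clock-regularity}. Since $g(s)=g(t)$ forces $F(s)=F(t)$, the map $\widehat F(g(t)):=F(t)$ is well defined on $g(I)\subset\mathbb R$. Write $L=[F]_{\alpha,g}$; then $d_H(\widehat F(u),\widehat F(v))\le L|u-v|^\alpha$. It suffices to build $\widehat f$ on $g(I)$ with $\widehat f(u_\theta)=x_\theta$, where $u_\theta=g(\theta)$, and $[\widehat f]_\alpha\le 9\Lambda_nL$, and then set $f=\widehat f\circ g$. No monotonicity of $g$ is needed.

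\textbf{Construction.} Localize around the prescribed point by a shrinking ball and then take a Steiner point. Put $\rho(u)=2L|u-u_\theta|^\alpha$ and
\[
 G(u)=\widehat F(u)\cap \overline B\bigl(x_\theta,\rho(u)\bigr),\qquad \widehat f(u)=\operatorname{st}(G(u)).
\]
The set $G(u)$ is nonempty, since $d(x_\theta,\widehat F(u))\le d_H(\widehat F(u_\theta),\widehat F(u))\le \tfrac12\rho(u)$. It is compact and convex, and $G(u_\theta)=\{x_\theta\}$. Hence $\widehat f$ is a selection through the prescribed point. The factor $2$ in the radius gives a margin: the nearest point of $\widehat F(u)$ to $x_\theta$ lies at depth at least $\rho(u)/2$ inside the ball.

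\textbf{Estimate, far case.} Fix $u,v\in g(I)$ and set $r_u=|u-u_\theta|$, $r_v=|v-u_\theta|$. Assume $r_u\ge r_v$. Suppose first that $|u-v|\ge c\,r_u$ for a fixed fraction $c$, say $c=\tfrac12$. Then both values lie in the ball of radius $\rho(u)$ around $x_\theta$, so
\[
 |\widehat f(u)-\widehat f(v)|\le \rho(u)+\rho(v)\le 4L r_u^\alpha\le 4c^{-\alpha}L|u-v|^\alpha .
\]
This is at most $8L|u-v|^\alpha$, which is within the target $9\Lambda_nL$ because $\Lambda_n\ge1$.

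\textbf{Estimate, near case.} Now suppose $|u-v|<c\,r_u$. Then $r_v\ge(1-c)r_u$, so the two radii are comparable and both are large relative to $L|u-v|^\alpha$. Here I use the Lipschitz property of the Steiner point, which reduces everything to bounding $d_H(G(u),G(v))$ by a constant times $L|u-v|^\alpha$. This in turn needs two ingredients. The first is a stability lemma for intersections with balls: if $A,B$ are compact convex, $d_H(A,B)\le\varepsilon$, and both $A$ and $B$ contain a point at distance at most $R/2$ from the centre, then $d_H(A\cap\overline B_R,\,B\cap\overline B_R)\le C\varepsilon$. To prove it, take $a\in A\cap\overline B_R$ and $b\in B$ with $|a-b|\le\varepsilon$. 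Then pull $b$ toward the deep point $y\in B$ by the convex combination $b'=(1-\lambda)b+\lambda y$ with $\lambda\approx 2\varepsilon/R$. This keeps $b'$ in $B$ and inside the ball, at a cost of $O(\varepsilon)$ in position. The second ingredient is the change of radius from $\rho(u)$ to $\rho(v)$. Since $|\rho(u)-\rho(v)|\le 2L|u-v|^\alpha$ by subadditivity of $t\mapsto t^\alpha$, the same deep-point shrinking argument controls it.

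The near case is the main obstacle: making the constant in the intersection-stability lemma explicit and checking that the two cases combine to at most $9\Lambda_nL$. If the direct convex-combination estimate gives a worse constant, I will adjust the ratio of $\rho$ to $L|u-u_\theta|^\alpha$ and the threshold $c$ to rebalance the two cases.
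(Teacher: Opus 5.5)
\textbf{There is a genuine gap.} Your construction matches the paper's: intersect with a ball around $x_\theta$ whose radius is a multiple of $L|g(t)-g(\theta)|^\alpha$, push points inward by a convex combination toward a deep point, then take Steiner points. But the proposal stops at the step that carries the content of the theorem, the constant $9$. With your radius factor $2$, the estimate you describe does not give $9$.

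Write $\delta=L|u-v|^\alpha$. For $a\in G(u)$ and $b\in\widehat F(v)$ with $|a-b|\le\delta$, you only know $r:=|b-x_\theta|\le\rho(v)+3\delta$. Pulling $b$ toward a point $y$ with $q:=|y-x_\theta|\le\rho(v)/2$ until it enters the ball costs at most $\frac{(r-\rho(v))(r+q)}{r-q}$. When $r-\rho(v)$ is small compared with $\rho(v)$, this bound is essentially $3(r-\rho(v))$, i.e.\ up to $9\delta$. The total is therefore up to $10\delta$, which is the value $1+(c+1)^2/(c-1)$ at radius factor $c=2$ in Remark~\ref{rem:subholder-nine}.

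Your near/far split does not remove this. The bad regime is $|u-v|\ll r_u$ with $\alpha$ close to $1$. There $\rho(u)-\rho(v)\approx 2\alpha\,(|u-v|/r_v)^{1-\alpha}\delta$ is nearly $2\delta$, while $\delta\ll\rho(v)$. This regime lies inside your near case and yields constants arbitrarily close to $10\Lambda_n$. Composing your two ingredients one after the other (fixed-radius stability, then change of radius) gives the same $4\delta+6\delta$.

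The fix is to take radius factor $3$, i.e.\ $R(t)=3L|g(t)-g(\theta)|^\alpha$, and to drop the case split entirely.
\begin{itemize}
\item Subadditivity of $x\mapsto x^\alpha$ gives $|R(s)-R(t)|\le 3\delta$.
\item So for $y\in G(s)$ and $z\in F(t)$ with $\|y-z\|\le\delta$, you get $r=\|z-x_\theta\|\le R(t)+4\delta$.
\item The deep point $p\in F(t)$ satisfies $q\le R(t)/3<r/3$, hence $(r+q)/(r-q)\le 2$.
\item The convex combination $w$ of $z$ and $p$ on the sphere of radius $R(t)$ then satisfies $\|z-w\|\le 2(r-R(t))\le 8\delta$.
\end{itemize}
Hence $d(y,G(t))\le 9\delta$. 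By symmetry $d_H(G(s),G(t))\le 9\delta$ for all pairs, and the Lipschitz property of the Steiner point gives $9\Lambda_n L$.

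Nothing in this estimate needs $R$ to be large compared with $\delta$: the excess $r-R(t)$ and the ratio $(r+q)/(r-q)$ are both bounded independently of scale. That is why no near/far distinction is needed. With factor $3$, your far-case bound $\rho(u)+\rho(v)\le 6\cdot 2^\alpha L|u-v|^\alpha$ would itself exceed $9$ once $\alpha>\log_2\frac32$. So the "rebalancing" you propose really amounts to abandoning the split. Passing to $g(I)$ first is harmless but unnecessary, since the radius depends on $t$ only through $g(t)$.
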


\begin{proof}
Set $L=[F]_{\alpha,g}$ and define
\[
 R(t)=3L|g(t)-g(\theta)|^\alpha,
 \qquad
 G(t)=F(t)\cap\overline B(x_\theta,R(t)).
\]
The set $G(t)$ is nonempty, compact, and convex. Indeed,
\[
 d(x_\theta,F(t))
 \le d_H(F(\theta),F(t))
 \le L|g(t)-g(\theta)|^\alpha
 =\frac{R(t)}{3}.
\]
Moreover, $G(\theta)=\{x_\theta\}$.

Fix $s,t\in I$ and put
\[
 \delta=L|g(t)-g(s)|^\alpha.
\]
Since $0<\alpha<1$,
\[
 \bigl||u|^\alpha-|v|^\alpha\bigr|\le|u-v|^\alpha,
\]
and hence $|R(t)-R(s)|\le3\delta$. Let $y\in G(s)$. Choose $z\in F(t)$ with $\|y-z\|\le\delta$. Then
\[
 \|z-x_\theta\|
 \le R(t)+4\delta.
\]
If $z\in\overline B(x_\theta,R(t))$, then $z\in G(t)$ and $d(y,G(t))\le\delta$. Otherwise put $r=\|z-x_\theta\|>R(t)$. Choose $p\in F(t)$ with
\[
 q:=\|p-x_\theta\|\le R(t)/3.
\]
Along the segment $[z,p]\subset F(t)$ choose
\[
 \lambda=\frac{r-R(t)}{r-q}
 \qquad\text{and}\qquad
 w=(1-\lambda)z+\lambda p.
\]
By convexity of the norm,
\[
 \|w-x_\theta\|
 \le(1-\lambda)r+\lambda q
 =R(t),
\]
so $w\in G(t)$. Furthermore,
\[
 \|z-w\|
 =\lambda\|z-p\|
 \le (r-R(t))\frac{r+q}{r-q}
 \le2(r-R(t))
 \le8\delta,
\]
where the ratio is at most $2$ because $q\le R(t)/3<r/3$. Consequently,
\[
 d(y,G(t))\le\|y-w\|\le9\delta.
\]
Interchanging $s$ and $t$ gives
\[
 d_H(G(s),G(t))\le9L|g(t)-g(s)|^\alpha.
\]
Applying the Steiner point to $G$ and using $G(\theta)=\{x_\theta\}$ yields a selection $f(t)=\operatorname{st}(G(t))$ satisfying $f(\theta)=x_\theta$ and
\[
 [f]_{\alpha,g}\le9\Lambda_nL.
\]
\end{proof}

The prescribed-point condition therefore behaves differently in one and higher dimensions below the Lipschitz threshold: the original $g$-H\"older constant can be retained in one dimension, while the same-constant property may fail from dimension two onward; a selection with the same exponent remains available with an explicit enlarged constant.

\begin{remark}[On the coefficient $9$]\label{rem:subholder-nine}
The coefficient $9$ is an explicit upper bound produced by the radial truncation used in the proof. More generally, replacing the radius factor $3$ by $c>1$ leads, through the same estimate, to the coefficient
\[
 1+\frac{(c+1)^2}{c-1}.
\]
This particular upper bound is minimized at $c=3$, where it equals $9$. No claim is made that $9\Lambda_n$ is the optimal universal prescribed-point constant.
\end{remark}

\section{\texorpdfstring{$g$}{g}-continuous selections and the topology of \texorpdfstring{$g(I)$}{g(I)}}\label{sec:g-continuous}

Throughout the main part of this section, let $g:I\to\mathbb R$. We consider $g$-continuous selections without a quantitative H\"older bound. The problem depends only on the image $g(I)$, and no monotonicity assumption on $g$ is needed for the factorization and characterization results below. Additional Stieltjes assumptions will be imposed only when pure-jump clocks are considered.

\begin{proposition}[Factorization through $g(I)$]\label{prop:g-factorization}
A set-valued map $F:I\to\K(X)$ is $g$-continuous if and only if there exists a Hausdorff-continuous map
\[
 \widehat F:g(I)\to\K(X)
\]
such that $F(t)=\widehat F(g(t))$ for every $t\in I$. Likewise, a single-valued map $f:I\to X$ is $g$-continuous if and only if $f=\widehat f\circ g$ for some continuous $\widehat f:g(I)\to X$.
\end{proposition}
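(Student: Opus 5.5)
The plan is to prove both directions directly, treating the set-valued and single-valued cases in parallel. The single-valued case is the same argument with $d$ in place of $d_H$ on the target side.

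\emph{Necessity.} Suppose $F$ is $g$-continuous. First we show that $g(s)=g(t)$ forces $F(s)=F(t)$. If $g(s)=g(t)$, then $|g(s)-g(t)|=0<\delta$ for every $\delta>0$. Applying Definition~\ref{def:g-continuity} at the point $t$ gives $d_H(F(s),F(t))<\varepsilon$ for every $\varepsilon>0$. Since $d_H$ is a metric on $\K(X)$, this yields $F(s)=F(t)$. Hence $\widehat F(u):=F(t)$ for any $t$ with $g(t)=u$ is well defined on $g(I)$, and $F=\widehat F\circ g$.

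Next we check continuity of $\widehat F$ at a point $u_0=g(t_0)\in g(I)$. Given $\varepsilon>0$, take the $\delta$ supplied by $g$-continuity of $F$ at $t_0$. For any $u\in g(I)$ with $|u-u_0|<\delta$, write $u=g(s)$. Then $d_H(\widehat F(u),\widehat F(u_0))=d_H(F(s),F(t_0))<\varepsilon$. Note that surjectivity of $g:I\to g(I)$ is what lets every such $u$ be written as $g(s)$.

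\emph{Sufficiency.} Suppose $F=\widehat F\circ g$ with $\widehat F$ continuous on the subspace $g(I)$. Fix $t\in I$ and $\varepsilon>0$. Continuity of $\widehat F$ at $g(t)$ gives a $\delta>0$ such that $u\in g(I)$ and $|u-g(t)|<\delta$ imply $d_H(\widehat F(u),\widehat F(g(t)))<\varepsilon$. If $s\in I$ satisfies $|g(s)-g(t)|<\delta$, take $u=g(s)$; this yields exactly the $g$-continuity estimate.

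There is no real obstacle here. The only point needing care is the well-definedness step in necessity. It relies on the definition being applied with $s$ ranging over all of $I$, including points far from $t$ in $I$ but having the same clock value. It also relies on $d_H$ being a genuine metric on nonempty compact sets, so that distance zero implies equality. In the single-valued case the same role is played by $d$ being a metric on $X$.
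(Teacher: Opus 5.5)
Your proof is correct and follows the same route as the paper. You use $g$-continuity on pairs with equal clock values to make $\widehat F$ well defined, and then transfer the $\varepsilon$--$\delta$ definitions through $y=g(t)$, using surjectivity onto $g(I)$; you simply spell out the steps the paper compresses into a single line.
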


\begin{proof}
For $s,t\in I$, if $g(s)=g(t)$, then $g$-continuity forces $F(s)=F(t)$, so $\widehat F(g(t))=F(t)$ is well defined. The definitions of continuity are then identical after passage to the variable $y=g(t)$. The converse follows directly from the continuity of $\widehat F$ on $g(I)$.
\end{proof}

Recall that a topological space is called zero-dimensional if it has a base consisting of sets that are both open and closed.

The argument uses the classical zero-dimensional Michael selection theorem: every lower semicontinuous closed-valued map from a zero-dimensional paracompact space into a complete metric space has a continuous selection \cite{Michael1956}. Since $g(I)$ is a subspace of $\mathbb R$, it is metrizable and hence paracompact.

\begin{theorem}[Prescribed-point $g$-continuous selection]\label{thm:g-continuous-positive}
Let $g:I\to\mathbb R$, let $(X,d)$ be a complete metric space, and suppose that $g(I)$ is zero-dimensional. If $F:I\to\K(X)$ is Hausdorff $g$-continuous, then for every $\theta\in I$ and every $x_\theta\in F(\theta)$ there exists a $g$-continuous selection $f:I\to X$ satisfying $f(\theta)=x_\theta$.
\end{theorem}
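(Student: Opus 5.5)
The plan is to pass to the image, apply Michael's zero-dimensional selection theorem there, and pull the selection back through $g$.

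\emph{Step 1: factorization.} By Proposition~\ref{prop:g-factorization}, write $F=\widehat F\circ g$, where $\widehat F:Y:=g(I)\to\K(X)$ is Hausdorff-continuous. It then suffices to find a continuous $\widehat f:Y\to X$ with $\widehat f(y)\in\widehat F(y)$ for all $y\in Y$ and $\widehat f(y_\theta)=x_\theta$, where $y_\theta=g(\theta)$. Indeed, $f=\widehat f\circ g$ is then $g$-continuous by the converse direction of Proposition~\ref{prop:g-factorization}, it is a selection of $F$, and $f(\theta)=x_\theta$.

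\emph{Step 2: build the prescribed-point map.} Define $\Psi:Y\to\K(X)$ by $\Psi(y_\theta)=\{x_\theta\}$ and $\Psi(y)=\widehat F(y)$ for $y\ne y_\theta$. Each value is nonempty and compact, hence closed. A continuous selection of $\Psi$ is exactly the desired $\widehat f$. Two facts about $Y$ are needed. First, $Y\subset\mathbb R$ is metrizable, hence paracompact. Second, $Y$ is zero-dimensional by hypothesis.

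\emph{Step 3: lower semicontinuity.} Hausdorff continuity of $\widehat F$ implies lower semicontinuity at every point. So I must check $\Psi$ at two kinds of points.

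At $y_\theta$ the value only shrank, and lower semicontinuity there is easy. Take an open $U$ meeting $\Psi(y_\theta)$, so $x_\theta\in U$, and fix a ball $B(x_\theta,\varepsilon)\subset U$. Since $x_\theta\in\widehat F(y_\theta)$, Hausdorff continuity gives $d(x_\theta,\widehat F(y))<\varepsilon$ for $y$ near $y_\theta$. Hence $\widehat F(y)\cap U\neq\varnothing$ for those $y$.

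At a point $y\ne y_\theta$ there is a potential problem, because nearby points include $y_\theta$, where the value is smaller. This is the main obstacle. If $y_\theta$ is isolated in $Y$ the issue disappears, but in general it is not.

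\emph{Step 4: use zero-dimensionality to avoid the obstacle.} Instead of fighting lower semicontinuity, I would split the space. Since $Y$ is zero-dimensional, $\{y_\theta\}$ is closed, and every point $y\neq y_\theta$ has a clopen neighbourhood missing $y_\theta$. So $Y\setminus\{y_\theta\}$ is open.

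Now apply Michael's theorem twice. The first application is to $\widehat F$ on all of $Y$, with the closed subset $\{y_\theta\}$ and the partial selection $x_\theta$. This is the extension form of Michael's zero-dimensional theorem, which extends continuous selections from closed subsets.

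If one prefers to avoid the extension form, there is a direct alternative:
\begin{itemize}
\item Around $y_\theta$, use Hausdorff continuity to pick a decreasing sequence of clopen neighbourhoods $W_k$ with $\sup_{y\in W_k}d(x_\theta,\widehat F(y))<2^{-k}$.
\item On each clopen annulus $W_k\setminus W_{k+1}$, apply Michael's theorem to the lower semicontinuous map $y\mapsto \widehat F(y)\cap\overline B(x_\theta,2^{-k})$. This map has nonempty closed values.
\item Lower semicontinuity needs the open ball, so use $\overline{\widehat F(y)\cap B(x_\theta,2^{-k+1})}$ instead. Intersecting a lower semicontinuous map with an open ball and taking closures preserves lower semicontinuity.
\item On $Y\setminus W_0$, apply Michael's theorem to $\widehat F$ directly.
\end{itemize}

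The annuli are clopen and partition $Y\setminus\{y_\theta\}$. The glued map is therefore continuous off $y_\theta$, and it is continuous at $y_\theta$ because its values on $W_k$ lie within $2^{-k+1}$ of $x_\theta$. This yields $\widehat f$, and Step 1 finishes the proof.
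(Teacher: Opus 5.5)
Your argument is correct, and its skeleton---factor through $Y=g(I)$, replace $\widehat F(y_\theta)$ by $\{x_\theta\}$, apply Michael's zero-dimensional selection theorem, pull back along $g$---is exactly the paper's. Where you diverge is Step~3: the ``main obstacle'' at points $y\neq y_\theta$ does not exist. Lower semicontinuity at $y$ is a local condition. Since $Y\subset\mathbb R$ is Hausdorff, $y$ has an open neighbourhood missing $y_\theta$, and on it $\Psi=\widehat F$. This is the same fact you use in Step~4, that $Y\setminus\{y_\theta\}$ is open; it holds simply because points are closed and needs no zero-dimensionality. So your check at $y_\theta$ already completes the proof that $\Psi$ is lower semicontinuous, and one application of Michael's theorem to $\Psi$ finishes, as in the paper. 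Your extension-form route is the paper's argument in disguise, because the extension form is derived precisely by showing that this one-point modification is lower semicontinuous. The clopen-annulus construction is also valid: the truncated maps $y\mapsto\overline{\widehat F(y)\cap B(x_\theta,2^{-k+1})}$ are lower semicontinuous with nonempty closed values on $W_k$. It buys nothing beyond the direct argument, however, and as written it needs one more requirement. You must choose $W_k\subset(y_\theta-2^{-k},y_\theta+2^{-k})$ so that $\bigcap_k W_k=\{y_\theta\}$; otherwise the annuli together with $Y\setminus W_0$ need not cover $Y\setminus\{y_\theta\}$.
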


\begin{proof}
Let $y_\theta=g(\theta)\in g(I)$ and let $\widehat F:g(I)\to\K(X)$ be the factor map from Proposition~\ref{prop:g-factorization}. Define $\Psi:g(I)\to\K(X)$ by
\[
 \Psi(y)=
 \begin{cases}
 \{x_\theta\},&y=y_\theta,\\
 \widehat F(y),&y\ne y_\theta.
 \end{cases}
\]
The map $\Psi$ is closed-valued. It is lower semicontinuous away from $y_\theta$ because it agrees locally with $\widehat F$. At $y_\theta$, let $U\subset X$ be open with $x_\theta\in U$. Choose $\varepsilon>0$ with $B(x_\theta,\varepsilon)\subset U$. Hausdorff continuity of $\widehat F$ implies that for $y$ near $y_\theta$, $\widehat F(y)$ contains a point within $\varepsilon$ of $x_\theta$, hence $\Psi(y)\cap U\ne\varnothing$. Thus $\Psi$ is lower semicontinuous.

Michael's theorem gives a continuous selection $\widehat f:g(I)\to X$ of $\Psi$. Since $\Psi(y_\theta)=\{x_\theta\}$, $\widehat f(y_\theta)=x_\theta$. Define $f:I\to X$ by $f(t)=\widehat f(g(t))$. Then $f(t)\in F(t)$ for every $t\in I$, $f(\theta)=x_\theta$, and $f$ is $g$-continuous by Proposition~\ref{prop:g-factorization}.
\end{proof}

For subspaces of the real line, zero-dimensionality is equivalent to total disconnectedness \cite{Engelking1995}. Since the connected subsets of $\mathbb R$ are intervals, a subset $D\subset\mathbb R$ is zero-dimensional if and only if it contains no nondegenerate interval.

Bressan and Wang recall an explicit Hausdorff-continuous compact nonconvex multifunction $G:[0,1]\to\K(\mathbb R^2)$ with no continuous selection \cite{BressanWang2009}. This supplies the converse direction below.

\begin{theorem}[Zero-dimensionality characterization for $g$-continuous selections]\label{thm:clock-image-characterization}
Let $g:I\to\mathbb R$. The following are equivalent:
\begin{enumerate}
\renewcommand{\labelenumi}{(\roman{enumi})}
\item $g(I)$ is zero-dimensional;
\item for every complete metric space $X$, every compact-valued Hausdorff $g$-continuous map $F:I\to\K(X)$, and every $\theta\in I$ and $x_\theta\in F(\theta)$, there exists a $g$-continuous selection $f:I\to X$ satisfying $f(\theta)=x_\theta$.
\end{enumerate}
\end{theorem}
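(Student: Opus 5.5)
The implication (i)$\Rightarrow$(ii) is exactly Theorem~\ref{thm:g-continuous-positive}, so nothing new is needed there. All the work is in (ii)$\Rightarrow$(i), which I would prove by contraposition. Assume $g(I)$ is not zero-dimensional. By the remark preceding the theorem, $g(I)$ then contains a nondegenerate interval $[c,d]$ with $c<d$. The plan is to transport the Bressan--Wang multifunction $G:[0,1]\to\K(\mathbb R^2)$, which is Hausdorff-continuous and has no continuous selection, onto $g(I)$. The target space is $X=\mathbb R^2$, which is complete.

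To define a Hausdorff-continuous $\widehat F:g(I)\to\K(\mathbb R^2)$, I would use the retraction $\rho:\mathbb R\to[c,d]$, $\rho(y)=\min\{d,\max\{c,y\}\}$. This map is continuous and equals the identity on $[c,d]$. Put
\[
 \widehat F(y)=G\bigl((\rho(y)-c)/(d-c)\bigr),\qquad y\in g(I).
\]
As a composition of continuous maps, $\widehat F$ is Hausdorff-continuous on $g(I)$. Then $F=\widehat F\circ g$ is Hausdorff $g$-continuous by Proposition~\ref{prop:g-factorization}, and it is compact-valued.

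Suppose, toward a contradiction, that (ii) holds. Choose any $\theta\in I$ and any $x_\theta\in F(\theta)$; the prescribed point plays no role in the obstruction. Statement (ii) then gives a $g$-continuous selection $f$. By Proposition~\ref{prop:g-factorization}, $f=\widehat f\circ g$ with $\widehat f:g(I)\to\mathbb R^2$ continuous.

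The step that needs care is showing that $\widehat f$ is a selection of $\widehat F$. For $y\in g(I)$, pick $t$ with $g(t)=y$; this is possible since $g$ maps $I$ onto $g(I)$. Then $\widehat f(y)=f(t)\in F(t)=\widehat F(y)$. Now restrict to $[c,d]\subset g(I)$ and set
\[
 \sigma(s)=\widehat f\bigl(c+s(d-c)\bigr),\qquad s\in[0,1].
\]
This $\sigma$ is a continuous selection of $G$, which contradicts the Bressan--Wang property. Hence $g(I)$ must be zero-dimensional.

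The main obstacle is the equivalence "zero-dimensional $\Leftrightarrow$ no nondegenerate interval" for subsets of $\mathbb R$, together with correctly invoking the external counterexample. Both are taken from the cited literature \cite{Engelking1995,BressanWang2009}, so the argument itself reduces to the retraction construction and the factorization bookkeeping above.
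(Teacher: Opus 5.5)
Your proposal is correct and follows essentially the same route as the paper. Direction (i)$\Rightarrow$(ii) is Theorem~\ref{thm:g-continuous-positive}; for the converse you clamp and rescale the Bressan--Wang multifunction onto $g(I)$, pull it back along $g$, and use Proposition~\ref{prop:g-factorization} together with surjectivity of $g:I\to g(I)$ to turn a $g$-continuous selection into a continuous selection of $G$ on $[c,d]$. The only difference is cosmetic: you split the paper's map $\rho:g(I)\to[0,1]$ into a retraction onto $[c,d]$ followed by an affine rescaling.
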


\begin{proof}
The implication (i)$\Rightarrow$(ii) is Theorem~\ref{thm:g-continuous-positive}.

Suppose (i) fails. By the preceding classical characterization, $g(I)$ contains a nondegenerate interval $J=[u,v]$, where $u,v\in g(I)$ and $u<v$. Let $G:[0,1]\to\K(\mathbb R^2)$ be a Hausdorff-continuous compact-valued multifunction with no continuous selection as in \cite{BressanWang2009}. Define $\rho:g(I)\to[0,1]$ by
\[
 \rho(y)=
 \begin{cases}
 0,&y\le u,\\
 \dfrac{y-u}{v-u},&u\le y\le v,\\
 1,&y\ge v.
 \end{cases}
\]
Define $\widehat F:g(I)\to\K(\mathbb R^2)$ by $\widehat F(y)=G(\rho(y))$. Then $\widehat F$ is Hausdorff-continuous. A continuous selection of $\widehat F$ would restrict to a continuous selection of $G$ on $J$ after affine reparametrization, a contradiction. Define the pullback $F:I\to\K(\mathbb R^2)$ by $F(t)=\widehat F(g(t))$; it is $g$-continuous by Proposition~\ref{prop:g-factorization}. If $f:I\to\mathbb R^2$ were a $g$-continuous selection of $F$, Proposition~\ref{prop:g-factorization} would give a continuous map $\widehat f:g(I)\to\mathbb R^2$ with $f(t)=\widehat f(g(t))$. Since $g:I\to g(I)$ is surjective, the selection property implies $\widehat f(y)\in\widehat F(y)$ for every $y\in g(I)$, contradicting the choice of $\widehat F$. Hence (ii) fails.
\end{proof}

Thus, at the level of $g$-continuity, once $g(I)$ contains a nondegenerate interval, the classical nonconvex selection failure reappears.

\begin{corollary}[Pure-jump Stieltjes clocks]\label{cor:pure-jump-zero-dim}
Let $g:I\to\mathbb R$ be a left-continuous nondecreasing pure-jump Stieltjes clock, so that $\mu_g$ is purely atomic and
\begin{equation}\label{eq:pure-jump-definition}
 g(t)-g(a)=\sum_{\tau\in[a,t)}\Jump g(\tau),\qquad t\in I.
\end{equation}
Then $g(I)$ is zero-dimensional. Consequently, for every complete metric space $(X,d)$, every Hausdorff $g$-continuous map $F:I\to\K(X)$, and every $\theta\in I$ and $x_\theta\in F(\theta)$, there exists a $g$-continuous selection $f:I\to X$ such that $f(\theta)=x_\theta$.
\end{corollary}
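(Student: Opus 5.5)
The second assertion follows from the first together with Theorem~\ref{thm:g-continuous-positive}. So the real task is to show that $g(I)$ is zero-dimensional. By the characterization recalled just before Theorem~\ref{thm:clock-image-characterization}, this is equivalent to showing that $g(I)$ contains no nondegenerate interval. I would argue by contradiction. Suppose $[u,v]\subset g(I)$ with $u<v$. Since $g$ is nondecreasing and $g(I)$ contains $u$ and $v$, the points $s_0=\min\{t: g(t)\ge u\}$ and $t_0=\sup\{t: g(t)\le v\}$ are well defined. Rather than tracking them precisely, it is cleaner to work with the Stieltjes measure.

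The key step is a measure-theoretic identity. The set $g(I)\cap(u,v)$ has Lebesgue measure equal to the image under $g$ of the "continuous part" of $\mu_g$. More concretely, I would show the following gap inclusion: for each jump point $\tau$ with $\Jump g(\tau)>0$, the open interval $(g(\tau),g(\tau+))$ is disjoint from $g(I)$. This holds because $g(t)\le g(\tau)$ for $t\le\tau$ and $g(t)\ge g(\tau+)$ for $t>\tau$. These gaps are pairwise disjoint, since $g$ is nondecreasing.

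Consider those gaps lying in $[g(a),g(b)]$. By \eqref{eq:pure-jump-definition} with $t=b$, their total length is $\sum_{\tau\in[a,b)}\Jump g(\tau)=g(b)-g(a)$. Hence the complement of the union of the gaps in $[g(a),g(b)]$ has Lebesgue measure zero. Since $g(I)\subset[g(a),g(b)]$ and $g(I)$ avoids all gaps, $g(I)$ is Lebesgue-null. A null set cannot contain $[u,v]$ with $u<v$, which gives the contradiction. The value $g(b)$ is included in $g(I)$ but is only a single point, so this causes no trouble.

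The main obstacle I expect is justifying that the gaps exhaust the full length $g(b)-g(a)$. For this I must make sure that each gap $(g(\tau),g(\tau+))$ actually lies in $[g(a),g(b)]$, which it does since $g(a)\le g(\tau)$ and $g(\tau+)\le g(b)$ for $\tau<b$. I also need countable additivity of Lebesgue measure over the at most countably many disjoint gaps. Left-continuity plays no role beyond fixing the convention $\mu_g(\{\tau\})=\Jump g(\tau)$, so the argument is robust.
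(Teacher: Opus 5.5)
Your proof is correct and matches the paper's argument. The open jump gaps $(g(\tau),g(\tau+))$ are pairwise disjoint and miss $g(I)$, and by the pure-jump identity at $t=b$ they have total length $g(b)-g(a)$; hence $g(I)$ is Lebesgue-null, contains no nondegenerate interval, and is zero-dimensional, after which Theorem~\ref{thm:g-continuous-positive} gives the selection. The points $s_0,t_0$ and the unproved remark about the ``continuous part'' of $\mu_g$ are never used and can be deleted.
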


\begin{proof}
For every $a\le\tau<b$ with $\Jump g(\tau)>0$, consider the open jump gap $(g(\tau),g(\tau+))$. These jump gaps are pairwise disjoint and do not meet $g(I)$. By \eqref{eq:pure-jump-definition}, their total length equals $g(b)-g(a)$. Hence $g(I)$ has Lebesgue measure zero and therefore cannot contain a nondegenerate interval. The preceding classical characterization and Theorem~\ref{thm:clock-image-characterization} now give the result.
\end{proof}

\begin{example}[A zero-dimensional image for a Stieltjes clock that is not pure-jump]\label{ex:svc-clock}
Let $C\subset[0,1]$ be the standard Smith--Volterra--Cantor set, which is closed, nowhere dense, contains $0$ and $1$, and has Lebesgue measure $1/2$. Define
\[
 g(t)=\min\bigl(C\cap[t,1]\bigr),\qquad 0\le t\le1.
\]
Then $g$ is nondecreasing and left-continuous, and $g(I)=C$. If $(a_k,b_k)$ are the complementary gaps of $C$, then $\Jump g(a_k)=b_k-a_k$, so
\[
 \sum_k \Jump g(a_k)=\frac12<1=g(1)-g(0).
\]
Thus $g$ is not pure-jump, while $g(I)$ is zero-dimensional because it contains no nondegenerate interval. The zero-dimensionality characterization therefore applies even though the Stieltjes measure of $g$ is not purely atomic.
\end{example}

\section{Discussion and conclusions}\label{sec:discussion}

At and above the Lipschitz threshold, $g$-H\"older regularity and local variation require different assumptions for their preservation. For $\alpha\ge1$, a prescribed-point selection can preserve $g$-H\"older regularity without assuming monotonicity of $g$, and its $g$-H\"older seminorm does not increase. This alone does not ensure the corresponding variation bound on every subinterval, as shown by the nonmonotone example in Section~\ref{sec:examples}. When $g$ is nondecreasing, one prescribed-point selection preserves both $g$-H\"older regularity and the local Hausdorff-variation control. Thus monotonicity is not needed for regularity preservation, while the nondecreasing assumption provides the structure used to obtain local variation control for the same selection.

A further change occurs when the exponent exceeds one. At $\alpha=1$, continuous parts of the clock may still contribute to variation. For $\alpha>1$, however, the continuous part of a left-continuous nondecreasing Stieltjes clock contributes no Hausdorff variation: the variation of the set-valued map is exactly the sum of its jumps. Nonconstant behavior is therefore still possible, but it is carried by clock jumps; Example~\ref{ex:one-jump} shows that a single jump already suffices. The prescribed-point selection has the corresponding jump decomposition, and each of its jumps is bounded by the jump of the set-valued map at the same point. Table~\ref{tab:regimes} summarizes this change across the Lipschitz threshold.

Below the Lipschitz threshold, we work with compact convex Euclidean-valued maps, in accordance with the convexity assumptions used in classical H\"older selection results. For $0<\alpha<1$, a $g$-H\"older map $F:I\to\mathcal K_c(\mathbb R^n)$ admits a prescribed-point selection with the same H\"older exponent. In one dimension, the original $g$-H\"older constant can also be retained. From dimension two onward, this same-constant property may fail, although a prescribed-point selection with the same exponent remains available with an explicit controlled enlargement of the constant. Figure~\ref{fig:subholder-dimension} illustrates the geometric source of this distinction.

The local variation estimate has a further consequence for Riesz $p$-variation. The same prescribed-point selection preserves every finite Riesz $p$-variation considered with respect to a nondecreasing external Stieltjes clock. Thus the selection obtained from the local variation estimate remains valid for a broader family of variation functionals.

A different situation appears when only $g$-continuity is assumed. Since the set-valued map factors through $g(I)$, the existence of prescribed-point $g$-continuous selections is determined by the topology of this set. For complete metric targets, the relevant condition is the zero-dimensionality of $g(I)$. Pure-jump Stieltjes clocks satisfy this condition, but the non-pure-jump example shows that zero-dimensionality also occurs beyond clocks whose Stieltjes measure is purely atomic.

Taken together, the results distinguish the roles of the geometry of the values and the structure of the clock. Below the Lipschitz threshold, compact convex values allow the H\"older exponent to be preserved under the prescribed-point constraint, while preservation of the original constant depends on the dimension. At and above the threshold, $g$-H\"older regularity can be preserved without assuming monotonicity of $g$, whereas local variation control for the same selection is obtained for nondecreasing clocks. For exponents greater than one, this variation is carried entirely by jumps. At the level of $g$-continuity, the corresponding prescribed-point selection property is characterized by the zero-dimensionality of $g(I)$.

\appendix

\section{Proof of the small-mass partition lemma}\label{app:small-mass-proof}

\begin{proof}[Proof of Lemma~\ref{lem:small-mass}]
Define
\[
 H(x)=\mu([u,x)),\qquad u\le x\le v,
\]
and put $M=H(v)$. If $M\le2\delta$, the one-cell partition is sufficient.

Assume therefore that $M>2\delta$. For every integer $k\ge1$ such that $k\delta<M$, define
\[
 q_k=\inf\{x\in[u,v):H(x+)\ge k\delta\},
 \qquad H(x+)=H(x)+\mu(\{x\}).
\]
The left continuity of $H$, together with the right continuity of $x\mapsto H(x+)$ and the bound $\mu(\{x\})\le\delta$, gives
\[
 (k-1)\delta\le H(q_k)\le k\delta.
\]

Now remove repetitions from the finite family consisting of $u$, the points $q_k$, and $v$, and write the remaining points in increasing order as
\[
 u=r_0<r_1<\cdots<r_N=v.
\]
We claim that every consecutive pair satisfies $H(r_j)-H(r_{j-1})\le2\delta$.
Indeed, suppose first that $r_{j-1}$ and $r_j$ are interior quantile points. Let $q_k=r_{j-1}$ correspond to the largest index represented at $r_{j-1}$. Since $r_j$ is the next distinct quantile point, it corresponds to $q_{k+1}$. Hence $H(r_j)\le(k+1)\delta$ and $H(r_{j-1})\ge(k-1)\delta$, which proves the claimed bound.
The same bound holds for the first cell. If $q_1>u$, its mass is at most $\delta$; if $q_1=u$, the next distinct quantile point is reached no later than the second $\delta$-level, so the first cell has mass at most $2\delta$.

For the last cell, let $K$ be the largest integer satisfying $K\delta<M$. Then $M\le(K+1)\delta$, while the last quantile point satisfies $H(r_{N-1})\ge(K-1)\delta$, and therefore $M-H(r_{N-1})\le2\delta$.
Since
\[
 H(r_j)-H(r_{j-1})=\mu([r_{j-1},r_j)),
\]
we obtain \eqref{eq:small-mass-cells} for every $j=1,\ldots,N$.
\end{proof}

\section{Proof of the higher-dimensional obstruction to same-constant prescribed-point selection}\label{app:planar-obstruction}

\begin{proof}[Proof of Theorem~\ref{thm:higher-dimensional-obstruction}]
It is enough to construct the example in $\mathbb R^2$, since it can then be embedded isometrically into $\mathbb R^n$. Let $I=[0,3]$ and
\[
 g(t)=
 \begin{cases}
  0, & 0\le t\le1,\\
  \frac12, & 1<t\le2,\\
  1, & 2<t\le3.
 \end{cases}
\]
This clock is nondecreasing and left-continuous. Put $a=2^{-\alpha}$, so $1/2<a<1$. Choose $c$ such that
\[
 \max\left\{0,\frac{1}{2a^2}-1\right\}<c<\frac1a-1,
\]
which is possible because $a>1/2$, and set $s=\sqrt{1-c^2}$. Define
\[
 p=a(1,0),
 \qquad
 d=a(c,s),
 \qquad
 r=p+d,
 \qquad
 w=(-s,c),
 \qquad
 z=r+sa\,w.
\]
Then $d\perp w$,
\[
 \|r\|^2=2a^2(1+c)>1,
 \qquad
 \|z\|^2=a^2(1+c)^2<1.
\]
Now set
\[
 K_0=\operatorname{conv}\{0,p,r\},
 \qquad
 K_1=[p,r],
 \qquad
 K_2=[r,z],
\]
and let $F$ take the values $K_0,K_1,K_2$ on the three level sets of $g$, respectively.

Since $K_1\subset K_0$ and
\[
 \|p+\tau d\|^2=a^2(1+2c\tau+\tau^2),
 \qquad 0\le\tau\le1,
\]
the point $p$ is the unique point of $K_1$ at distance at most $a$ from the origin and $d_H(K_0,K_1)=a$.
The segments $K_1$ and $K_2$ meet at $r$, with lengths $a$ and $sa<a$, respectively, so $d_H(K_1,K_2)\le a$. Finally, the distance to a nonempty closed convex set is convex. At the vertices $0,p,r$ of $K_0$ the distances to $K_2$ are strictly less than $1$, because $\|z\|<1$, $\|p-r\|=a<1$, and $r\in K_2$. Hence every point of $K_0$ has distance less than $1$ from $K_2$. Conversely, the endpoint distances from $K_2=[r,z]$ to the convex set $K_0$ are $0$ and at most $\|z-r\|=sa<1$, so every point of $K_2$ has distance less than $1$ from $K_0$. Thus
\[
 d_H(K_0,K_2)<1.
\]
Because the clock increments are $1/2,1/2,1$, these estimates give $[F]_{\alpha,g}=1$.

Prescribe $x_\theta=0$ at any $\theta\in[0,1]$. If a selection $f$ through this point satisfied $[f]_{\alpha,g}\le1$, its value $x_1\in K_1$ on the middle level would obey $\|x_1\|\le a$. By the uniqueness above, $x_1=p$. Its value on the last level has the form
\[
 x_2=r+\mu a w,
 \qquad 0\le\mu\le s.
\]
The first-to-last constraint gives $\|x_2\|\le1$. Since $\|r\|>1$, necessarily $\mu>0$. Using $r-p=d$ and $d\perp w$,
\[
 \|x_2-p\|^2
 =\|d+\mu aw\|^2
 =a^2(1+\mu^2)>a^2.
\]
This contradicts the middle-to-last constraint $\|x_2-x_1\|\le a$.
\end{proof}

\bibliographystyle{plainnat}
\bibliography{references}

\end{document}